\newtheorem{thm}{Theorem}[section]
\newtheorem{coro}[thm]{Corollary}
\newtheorem{prop}[thm]{Proposition}
\newtheorem{claim}[thm]{Claim}
\newtheorem{lem}[thm]{Lemma}
\newtheorem*{lems}{Lemma}
\newtheorem{conj}[thm]{Conjecture}
\theoremstyle{definition}
\newtheorem{defn}[thm]{Definition}
\newtheorem{ex}[thm]{Example}
\newtheorem{question}[thm]{Question}
\newcommand{\Rset}{\mathbb{R}}
\newcommand{\Zset}{\mathbb{Z}}
\newcommand{\Nset}{\omega}
\newcommand{\Cset}{2^\Nset}
\newcommand{\CCset}{2^{<\Nset}}
\newcommand{\Pset}{\Nset^\Nset}
\newcommand{\UPset}{\Nset^{\uparrow\Nset}}
\newcommand{\ZZset}{\Zset^\Nset}
\newcommand{\eps}{\varepsilon}
\newcommand{\del}{\delta}
\newcommand{\subs}{\subseteq}
\newcommand{\sups}{\supseteq}
\newcommand{\clos}[1]{\overline{#1}}
\newcommand{\dist}{\underline{d}}
\newcommand{\concat}{\mkern-1mu^\smallfrown\mkern-4mu}
\newcommand{\rest}{{\restriction}}
\renewcommand{\leq}{\leqslant}
\renewcommand{\geq}{\geqslant}
\newcommand{\emany}{\exists^\infty}
\newcommand{\abs}[1]{\lvert#1\rvert}
\newcommand{\seq}[1]{\langle#1: n\in\omega\rangle}
\newcommand{\sseq}[1]{\langle#1\rangle}
\newcommand{\seqeps}{\seq{\eps_n}}
\newcommand{\Seqeps}{\seq{\eps_n}\in(0,\infty)^\omega}
\newcommand{\si}{$\sigma$\nobreakdash-}
\newcommand{\hm}{\mathcal H}
\DeclareMathOperator{\non}{\mathsf{non}}
\DeclareMathOperator{\add}{\mathsf{add}}
\DeclareMathOperator{\cov}{\mathsf{cov}}
\DeclareMathOperator{\cof}{\mathsf{cof}}
\DeclareMathOperator{\diam}{diam}
\DeclareMathOperator{\rng}{rng}
\DeclareMathOperator{\interior}{int}
\newcommand{\NN}{\mathcal N}
\newcommand{\MM}{\mathcal M}
\newcommand{\grG}{\mathbb{G}}
\newcommand{\eq}{\mathfrak{e\mkern-1.5mu q}}
\newcommand{\co}{\mathfrak{c}}
\newcommand{\dd}{\mathfrak{d}}
\newcommand{\nonN}{\non(\NN)}
\newcommand{\nonM}{\non(\MM)}
\newcommand{\covM}{\cov(\MM)}
\newcommand{\addM}{\add(\MM)}
\newcommand{\nonS}[1]{\non(\smz(#1))}
\newcommand{\UM}{\mathcal{U\mkern-2.5mu M}}
\newcommand{\mc}[1]{\mathcal{#1}}
\DeclareMathOperator{\hdim}{\dim_{\mathsf{H}}}
\newcommand{\uhm}{\overline{\mathcal H}{}}
\newcommand{\smz}{\ensuremath{\boldsymbol{\mathsf{Smz}}}}
\newcommand{\ssmz}{\ensuremath{\boldsymbol{\mathsf{Smz}}^\sharp}}
\DeclareMathOperator{\id}{id}
\DeclareMathOperator{\uhdim}{\overline{\dim}_{\mathsf{H}}}
\newcommand{\cyl}[1]{\langle#1\rangle}
\newcommand{\EE}{\mathcal E}
\newcommand{\NNs}{\mathcal N_\sigma}
\newcommand{\upto}{{\nearrow}}
\DeclareMathOperator{\proj}{proj}
\newcommand{\bbb}{\mathfrak b}
\newcommand{\nonSS}[1]{\non(\ssmz(#1))}
\newcommand{\eqq}{\mathfrak{e\mkern-1.5mu q}^*}
\newcommand{\addT}{\add^*(\MM)}
\newcommand{\TSI}{\ensuremath{\mathsf{TSI}}}
\newcommand{\CLI}{\ensuremath{\mathsf{CLI}}}
\newcommand{\equival}{\ensuremath{\Leftrightarrow}}
\newenvironment{enum}{\begin{enumerate}[\rm(i)]}{\end{enumerate}}
\newenvironment{itemyze}%
{\begin{list}{\textbullet}{\setlength{\labelwidth}{1ex}\setlength{\leftmargin}{2.1em}}}%
{\end{list}}
\newcommand{\Implies}{\ensuremath{\Rightarrow}}
\begin{document}
\title
[Strong measure zero in Polish groups]
{Strong measure zero in Polish groups}

\author{Michael Hru\v s\'ak}
\address{Centro de Ciencias  Matem\'aticas, Universidad Nacional Aut\'onoma de M\'exico,
\' Campus Morelia, 58089, Morelia, Michoac\'an, M\'exico.}
\email{michael@matmor.unam.mx}
\urladdr{ http://www.matmor.unam.mx/~michael}

\author{Ond\v rej Zindulka}
\address
{Department of Mathematics\\
Faculty of Civil Engineering\\
Czech Technical University\\
Th\'akurova 7\\
160 00 Prague 6\\
Czech Republic}
\email{ondrej.zindulka@cvut.cz}
\urladdr{http://mat.fsv.cvut.cz/zindulka}

\thanks{%
The first author gratefully acknowledges support from \mbox{PAPIIT} grant
IN 100317.
The second author was supported from European Regional Development
Fund-Project ``Center for Advanced Applied Science''
(No.~CZ.02.1.01\slash 0.0\slash 0.0\slash 16\_019\slash 0000778).
}

\keywords{strong measure zero, Polish group,
 Galvin-Mycielski-Solovay theorem}
\subjclass[2010]{03E17, 22B05, 22A10, 54E53, 54E52}
\begin{abstract}
The notion of strong measure zero is studied in the context of
Polish groups. In particular, the extent to which  the theorem of
Galvin, Mycielski and Solovay holds in the context of an arbitrary
Polish group is studied.
Hausdorff measure and dimension is used to characterize strong measure zero.
The products of strong measure zero sets are examined.
Sharp measure zero, a notion stronger that strong measure zero, is shown
to be related to meager-additive sets in the Cantor set and Polish groups
by a theorem very similar to the theorem of
Galvin, Mycielski and Solovay.
\end{abstract}
\maketitle
\section{Introduction}

\emph{All spaces and topological groups considered are separable and metrizable.}

A natural extension of  a definition due to Borel (1919) \cite{MR1504785}
asserts that a metric space $X$ has
\emph{strong measure zero} ($\smz$) if for any sequence $\seq{\eps_n}$
of positive real numbers there is a cover $\{U_n:n\in\Nset\}$ of $X$
such that $\diam U_n \leq \eps_n$ for all $n$.


In the same paper Borel conjectured that every strong measure zero set of reals is countable.
This was shown  to be independent
of the usual axioms of set theory by Sierpi\'nski (1928) \cite{sierpinski} and  Laver (1976)
\cite{MR0422027}. Later
it was observed by Carlson \cite{MR1139474} that the {\it Borel Conjecture} actually
implies a formally stronger statement that all separable $\smz$ metric spaces
are countable.


We shall investigate the behaviour of strong measure zero sets  in arbitrary Polish groups.
In a sense we shall investigate the world, where the Borel conjecture fails, as most if not all of
our results are trivial if the Borel Conjecture holds.


The subject of inquiry of this work starts with the theorem of
Galvin, Mycielski, and Solovay \cite{GMS,MR3696064} who, confirming  a conjecture
of Prikry,  proved that \emph{a set $A\subs\Rset$  is of strong measure zero
if and only if $A+M\neq\Rset$ for every meager set $M\subs\Rset$}.


Relatively recently Kysiak \cite{kysiak} and Fremlin \cite{fremlin5},
independently, showed that an
analogous theorem is true for all locally compact metrizable groups
(see also \cite{wohofsky}). We present a proof of Kysiak and Fremlin's result
based on \cite{MR3453581}
and consider the natural question as to how far the result can be extended.
The fact that the theorem does not in general hold for all Polish groups was
established in \cite{MR3453581} and \cite{wohofsky} and extended in \cite{MR3707641}.
This depends on further set-theoretic axioms, as  the result obviously
holds for all Polish groups assuming, e.g., the Borel Conjecture.


Cardinal invariants associated with strong measure zero sets on $\Rset$, $\Pset$, and
$\Cset$ have been studied rather extensively in recent decades
\cite{MR1350295, MR1253925, MR1955243,mejiaetal}. We review some of these, concentrating on the
\emph{uniformity} invariant of the \si ideal $\smz(\grG)$ of strong measure
zero subsets of a Polish group $\mathbb G$.   A version of the
Galvin-Mycielski-Solovay  theorem links  this
study to the investigation of the so-called \emph{transitive coefficient}
$\cov^*(\MM)$ in Polish groups
\cite{MR1350295, MR2224048, MR2421832}.

\bigskip

It was probably the aforementioned result of Prikry, Galvin, Mycielski and Solovay
that inspired a few notions of smallness on the real line and Cantor set
akin to strong measure zero. E.g., a set $S\subs\Rset$ is
\emph{strongly meager} if $S+N\neq\Rset$ for each Lebesgue null set $N$;
it is \emph{null-additive} if $S+N$ is Lebesgue null for each Lebesgue null set $N$; and
it is \emph{meager-additive} if $S+M$ is meager for each meager set $M$.
These notions easily extend to other Polish groups.

We will study the latter notion, which is obviously a strengthening of strong measure
zero.
Since the early nineties, meager-additive sets in the Cantor set receive quite some attention.
Let us single out the remarkable paper of Shelah~\cite{MR1324470} that provides a proof
that each null-additive set in the Cantor set $\Cset$ is meager-additive and also
the underlying combinatorial characterizations
of null-additive and meager-additive sets in $\Cset$ (cf.~\ref{ShelahM} below),
and Pawlikowski's paper~\cite{MR776210} providing fine combinatorics
and study of the so called transitive coefficients mentioned above that are actually
cardinal invariants of strong measure zero, meager-additive and null-additive sets,
and of course Bartoszy\'nski's book~\cite{MR1350295}.
However, all nontrivial results on meager-additive sets depended heavily on the
combinatorial and group structure of $\Cset$.
In 2009 Weiss~\cite{MR2545840,MR3241126}
found a method that made the theory transferrable to the real line.
Only very recently in~\cite{Zin_M-add,Zin_M-additive}
it was noted that there is a description of
meager-additive sets that resembles very much the Borel's definition
of strong measure zero. Metric spaces having this property
were termed to have \emph{sharp measure zero}.
This allowed for the theory of meager-additive sets to extend to other Polish groups.
We provide some highlights of the rather new theory of sharp measure  in metric spaces
and meager-additive sets, and sharp measure zero on $\Cset$
and on Polish groups, including calculation of the uniformity number
of sharp measure zero and meager-additive sets.

\bigskip

 Our set-theoretic notation is standard
and follows e.g. \cite{MR756630, MR1940513}. In particular, the set of finite
ordinals is identified with the set of non-negative integers and denoted interchangeably
by $\Nset$ and $\mathbb N$. In the same vein, the non-negative integers themselves are
identified with the set of smaller non-negative integers, in particular $2=\{0,1\}$.

All spaces considered are separable and
metrizable, often endowed with a compatible metric denoted $d$.
We denote by $B(x,\eps)$ the closed ball with radius $\eps$ centered at $x$,
the corresponding open ball will be denoted by $B^\circ(x,\eps)$.

The product spaces  of the type $A^\Nset$ for some
finite or countable set $A$ are  considered with the \emph{metric of least difference }
defined by
$d(f,g)=2^{-|f\wedge g|}$, where $f\wedge g= f\rest n$ for
$n=\min\{k: f(k)\neq g(k)\}$. The clopen balls in the space $A^\Nset$ are
represented by nodes
of the tree  $A^{<\Nset}$, given $s\in A^{<\Nset}$, we let
$\sseq s=\{f\in A^\Nset: s\subs f\}$. Given a subtree $T$ of $A^{<\Nset}$,
we let $[T]=\{ f\in A^\Nset: \forall n\in\Nset$ $f\rest n\in T \}$ be
the (closed) set of branches of $T$. A metric space is \emph{analytic}
if it is a continuous image of $\Pset$, and
 it is \emph{Borel} (\emph{absolutely $G_\delta$}, resp.) if it is Borel
 ($G_\delta$, resp.) in its completion.

A \emph{Polish group} is a separable, completely metrizable
topological group. A compatible metric $d$ on a separable metrizable group
$\grG$ is \emph{left-invariant} if $d(zx,zy)=d(x,y)$
for any $x,y,z\in\grG$.

A separable group $\grG$ is a \CLI{} group if it admits
a complete left-invariant compatible metric.
Abelian and locally compact  Polish groups are \CLI, while, e.g.,
the group $S_\infty$ of all permutations of $\Nset $ is not.

A separable group $\grG$ is a \TSI{} group if it admits
a (both-sided) invariant compatible metric.
Not every Polish group admits an invariant metric,
but if it is compact or abelian, then it does. Also, any invariant metric on a
Polish group is complete.

\section{Strong measure zero in Polish groups}
The notion of strong measure zero is in general neither a topological nor
a metric property, but a \emph{uniform} property; in particular,
a uniformly continuous image of a $\smz$ set is $\smz$, and  if $X$
uniformly embeds into $Y$, then any set $A\subs X$ that is not $\smz{}$
in $X$ is not $\smz{}$ in $Y$ either. \label{uniform}

 As all left-invariant (equiv right-invariant)
metrics on a separable metrizable group are uniformly equivalent the notion of
strong measure zero becomes seemingly ``topological'':
a subset $S$ of a topological group $\grG$ is \emph{Rothberger bounded} if
for every sequence $\seq{U_n}$ of neighbourhoods of $1_\grG$ there
is a sequence  $\seq{g_n}$ of elements of the group $\grG$ such that the
family $\seq{g_n \cdot U_n}$ covers $S$.  It follows  \cite{fremlin5}
that a subset of a Polish group $\grG$ is Rothberger bounded if and only if
it is 
strong measure zero w.r.t.\ some (any) left-invariant metric on $\grG$.

Many of the results stated here could be phrased in the language of
uniformities and/or in terms of the property of being Rothberger bounded
(see \cite{fremlin5} for such treatment).

Whenever $\grG$ is a Polish group, $\smz(\grG)$ denotes the family of strong
measure zero sets with respect to any left-invariant metric
(i.e., the (left) Rothberger bounded sets as described above).

Of course, the choice of left-invariant over right-invariant is arbitrary, one
being isomorphic to the other via the inverse map of the group in question.
In fact, both the left Rothberger bounded and right Rothberger bounded set
form a \si ideal which is invariant under both left and right translations.

\begin{prop}\label{sigma}
$\smz(\grG)$ is a bi-invariant \si ideal.
\end{prop}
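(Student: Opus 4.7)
The plan is to work with the Rothberger-bounded characterization of $\smz(\grG)$ recorded just above the proposition, since that formulation is intrinsically group-theoretic and decouples the argument from any particular choice of left-invariant metric.

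Closure under subsets is immediate, as any cover of $A$ restricts to one of any $B\subs A$. For countable unions I would use the standard diagonal trick: given $\langle A_k : k\in\Nset\rangle$ in $\smz(\grG)$ and neighbourhoods $\langle U_n : n\in\Nset\rangle$ of $1_\grG$, fix a partition $\Nset=\bigsqcup_{k\in\Nset} I_k$ into pairwise disjoint infinite sets, and for each $k$ apply the Rothberger-bounded property of $A_k$ to $\langle U_n : n\in I_k\rangle$ to produce $g_n\in\grG$ ($n\in I_k$) with $A_k\subs\bigcup_{n\in I_k} g_n\cdot U_n$. Amalgamating these partial covers gives $\bigcup_k A_k \subs \bigcup_{n\in\Nset} g_n\cdot U_n$.

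For bi-invariance, fix $A\in\smz(\grG)$ and $g\in\grG$. Left invariance ($g\cdot A\in\smz(\grG)$) is direct: from $A\subs\bigcup_n g_n\cdot U_n$ one gets $g\cdot A\subs\bigcup_n (gg_n)\cdot U_n$. Right invariance ($A\cdot g\in\smz(\grG)$) is the substantive direction, since the chosen metric need not be right-invariant. The key point is that conjugation $x\mapsto g x g^{-1}$ is a homeomorphism of $\grG$, so for any sequence $\langle V_n\rangle$ of neighbourhoods of $1_\grG$ the sets $U_n:=g V_n g^{-1}$ are again neighbourhoods of $1_\grG$. Apply the Rothberger-bounded property of $A$ to $\langle U_n\rangle$ to obtain $g_n$ with $A\subs\bigcup_n g_n\cdot g V_n g^{-1}$, whence $A\cdot g\subs\bigcup_n (g_n g)\cdot V_n$, as required.

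The only mild obstacle is right invariance, which is not obviously compatible with a metric-level argument; the conjugation trick above dispatches it using only continuity of the group operations. Otherwise the proposition is a purely formal consequence of the Rothberger-bounded description of $\smz(\grG)$, with no further analytic or combinatorial input needed.
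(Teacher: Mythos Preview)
Your proof is correct and follows essentially the same approach as the paper: the partition trick for countable unions and direct translation for left invariance are identical. For right invariance the paper applies the Rothberger-bounded property to the right-translates $U_n\cdot g^{-1}$ (implicitly using that the condition is equivalent for arbitrary nonempty open sets), whereas you use the conjugates $gV_ng^{-1}$ to stay literally within neighbourhoods of $1_\grG$; these are minor variants of the same manoeuvre.
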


\begin{proof} To see that $\smz(\grG)$ is a \si ideal let
$\{X_n: n\in\Nset\}\subs \smz(\grG)$ and a sequence
$\{U_n: n\in\Nset\}$  of open subsets of $\grG$ be given.
Let $\{I_n: n\in\Nset\}$ be a partition of $\Nset$ into infinite sets.
As each $X_n$ is of strong measure zero, there is a sequence
$\{g_i: i\in I_n\}\subs \grG$ such that
$X_n\subs \bigcup_{i\in I_n} g_i\cdot U_i$.
Then $\bigcup_{n\in\Nset}X_n\subs \bigcup_{i\in\Nset}g_i\cdot U_i$.

Now, let $X\in \smz(\grG)$ and $g\in \grG$ be given.

To see that $g\cdot X\in \smz(\grG)$, note that if
$\{U_n: n\in\Nset\}$ is a sequence of open subsets of $\grG$  and
$\{g_i: i\in \Nset\}\subs \grG$ is such that
$X\subs\bigcup_{n\in\Nset} g_n\cdot U_n$, then
$g\cdot X\subs\bigcup_{n\in\Nset} g\cdot g_n\cdot U_n$.

To show that $X\cdot g\in \smz(\grG)$, let $\{U_n: n\in\Nset\}$ be a sequence
of open subsets of $\grG$. Consider
 the open sets $\{U_n\cdot g^{-1}: n\in\Nset\}$. As $X\in \smz(\grG)$ here is
 a sequence $\{g_n: n\in \Nset\}\subs \grG$ such that
$X\subs \bigcup_{n\in\Nset} g_n\cdot (U_n\cdot g^{-1})$.
Then $X\cdot g\subs  \bigcup_{n\in\Nset} g_n\cdot U_n$.
\end{proof}

Now, assuming Borel conjecture, or assuming that the group $\grG$ has an invariant
metric, the left and right
Rothberger bounded sets coincide. This is not true in general, though:

\begin{ex} Assuming {\sf CH}, there is a left Rothberger bounded subset
of the group of permutations $S^\infty$ of $\Nset$ which is not right Rothberger bounded.
\end{ex}

\begin{proof}
Denote by $\Omega$ the set of all finite partial injective functions from some
$n\in\Nset$ to $\Nset$.  Enumerate all sequences of elements of
$S_\infty$ as $\{y_\alpha:\alpha<\Nset_1\}$, and all increasing functions from
$\Nset$ to $\Nset $ as $\{f_\alpha:\alpha<\Nset_1\}$.

We shall recursively  construct
$\{g_\alpha:\alpha<\Nset_1\}\subs S_\infty$ and
$\{z_\alpha:\alpha<\Nset_1\}\subs S_\infty^\Nset$ so that
\begin{enumerate}
\item $\forall \beta<\alpha<\Nset_1 \ \exists n\in \Nset \ g_\alpha\rest f_\beta(n)=
  z_\beta(n)\rest  f_\beta(n)$, while
\item $\forall \beta<\alpha<\Nset_1 \ \forall n\in \Nset \ g_\alpha^{-1}\rest n+1
  \neq  y_\alpha^{-1}(n)\rest  n+1$, and
\item $\forall s\in \Omega$ and $m_0<m_1$ the first two elements of $\Nset\setminus \rng (s)$
$\exists a\in [\Nset]^{m_1+1}$
\begin{enumerate}
\item  $\forall n\in a$ $s\subs z_\alpha(n)\rest f_\alpha (n)$,
\item  $\forall n\in a$ $m_0 \in \rng(z_\alpha(n)\rest f_\alpha (n))$,
\item  $\forall n\in a$ $m_1 \not \in \rng(z_\alpha(n)\rest f_\alpha (n))$, and
\item $\forall i\neq j\in a$ $z_\alpha(i)^{-1}(m_0)\neq z_\alpha(j)^{-1}(m_0)$.
\end{enumerate}
\end{enumerate}
It should be clear, that if this can be accomplished then (1) guaranties that
the set $X=\{g_\alpha:\alpha<\Nset_1\}$ is of strong measure zero,
while (2) makes sure that $X^{-1}$ is not. The condition (3) is there for
the construction not to prematurely terminate.

Assume that $g_\beta, z_\beta$ for $\beta<\alpha$ have been constructed. First choose
$z_\alpha\in S_\infty^\Nset$ satisfying (3).
Then enumerate $\alpha=\{\beta_i:i\in \Nset\}$ and recursively find
$\{n_i:i\in\Nset\}$ so that 
$s_i=z_{\beta_i}(n_i)\rest f_{\beta_i}(n_i)$ satisfy
\begin{enum}
\item  $s_i\subs s_{i+1}$,
\item  if $m_i=\min (\Nset\setminus \rng (s_i))$ then $m_i\in \rng (s_{i+1})$,
\item $\forall n\leq m_i \ \exists k_n\leq n  \ k_n\in \rng (s_i)  \ s_i^{-1}(k_n)
  \neq y_\alpha^{-1}(n)(k_n)$.
\end{enum}

Then let $g_\alpha=\bigcup_{i\in\Nset}s_i$. Then $g_\alpha\in S_\infty$ satisfying (1)  by (i) and (ii), and (2) by (iii). 

To construct the sequence $\seq{s_n}$ start with $s_{-1}=\emptyset$. Having found $s_i$, let $m < k$ be the first two elements of $\omega\setminus \rng(s_i)$. By (3), there is $n_{i+1}\in\omega$ such that 
$s_{i+1}=z_{\beta_{i+1}}(n_{i+1})\rest f_{\beta_{i+1}}(n_{i+1})$ is such that $\{m, k\}\cap \rng (s_{i+1})=\{m\}$, and 
$s_{i+1}^{-1}(n)\neq y_{\alpha}(n)^{-1}(n)$ for every $m\leq n<k$.
\end{proof}

There is a close relation between strong measure zero and Geometric measure theory
which shall be explored later on in the text, in section~\ref{sec:haus}.
The first result in this direction
is due to Besicovitch  \cite{MR1555386,MR1555389} who showed that a set $X$ of reals has
strong measure zero if and only if every uniformly continuous image of $X$
has Hausdorff dimension $0$.

\smallskip

Here we shall characterize strong measure sets in Polish groups as exactly the
sets of {\it universal invariant submeasure zero}, a result due to J. Greb\'{\i}k.

It is a classical result of Haar \cite{MR1503103} that every locally compact
Polish group admits an (essentially  unique) left-invariant, countably additive,
 outer regular Borel  measure.  In a similar vein, we shall prove here that every
 Polish group admits a non-trivial countably subadditive, outer regular,
 left-invariant diffuse submeasure, a result used in the next section.

Recall that a function $\mu: \mathcal P(\grG)\to \Rset^+\cup\{\infty\}$ is
a submeasure if $\mu(\emptyset)=0$, and $\mu (A\cup B)\leq \mu(A)+\mu(B)$ whenever
$A,B$ are subsets of $\grG$. A submeasure $\mu$ on $\grG$ is
\begin{itemize}
\item {\it \si subadditive} if
  $\mu(\bigcup_{n\in\Nset}A_n)\leq \sum_{n\in\Nset}\mu(A_n)$, for any
  $\{ A_n:n\in\Nset\}\subs \mathcal P(\grG)$,
\item {\it outer regular} if $\mu(A)=\inf \{ \mu(U): \ A\subs U, U$ open in $X\}$,
  for any $A\subs\grG$,
\item {\it left-invariant} if $\mu(A)=\mu(g\cdot A)$, for any
  $A\subs \mathbb G$ and $g\in\grG$,
\item {\it non-atomic} or {\it diffuse} if $\mu(\{x\})=0$ for every $x\in \grG$, and
\item {\it non-trivial} if $\mu(\grG)>0$.
\end{itemize}

\begin{lem}\label{lemma:submeasure}
In every Polish group $\grG$ there is
a decreasing local basis $\{U_n:n\in\Nset\}$ of open sets  at $1_{\grG}$ such that
for every $m\in\Nset$ and $\{a_n: n>m\}\subs \mathcal P(\grG)$ such that
$|a_n|=n$ for every $n>m$, $U_m\not \subs \bigcup_{n>m} a_n\cdot U_n$.
\end{lem}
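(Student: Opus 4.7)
The plan is to construct the local basis $\{U_n\}$ in parallel with, for each $m$, a rich ``Cantor'' tree $T_m$ of witness points rooted at $1_{\grG}$ whose level-$\ell$ nodes lie in $U_m$ and are highly separated with respect to the scale $U_{m+\ell}$. The separation will be sharp enough that any $n$ translates of $U_n$ capture only few level-$(n-m)$ nodes of $T_m$, and summing these amounts in the natural product measure on the branch space of $T_m$ will leave a positive-measure set of branches whose limits provide the required uncovered points in $U_m$.

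Concretely, I would fix a compatible left-invariant metric $d$ on $\grG$ (which exists on every metrizable group) and a compatible complete metric $\rho$ (which exists because $\grG$ is Polish), and set $U_n:=B_d^\circ(1_{\grG},\eps_n)$ for a sequence $\eps_n\searrow 0$; left-invariance gives $gU_n=B_d^\circ(g,\eps_n)$, which makes the separation arguments quantitative. In advance I would also prescribe a fast-growing branching function $N(m,\ell)$ so that $\sum_{k\ge 1}(m+k)/\prod_{\ell=1}^kN(m,\ell)<1$ for every $m$; the concrete choice $N(m,\ell)=2^{m+\ell+1}$ works.

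The recursion on $n$ picks $\eps_n$ and extends each $T_m$ ($m<n$) by its $(n-m)$-th level. At stage $n$, for every $m<n$ and every level-$(n-m-1)$ node $y^m_\sigma$ I choose $N(m,n-m)$ distinct children in $B_d^\circ(y^m_\sigma,\delta_{m,n-m})\cap B_\rho^\circ(y^m_\sigma,2^{-n-m})$, where the $\delta$'s are taken small enough at each step that $\sum_{\ell>k}\delta_{m,\ell}<\eps_{m+k}$ is preserved for every $k$; then I set $\eps_n<\eps_{n-1}/2$ and less than one quarter of the minimum pairwise $d$-distance between all the new level-$(n-m)$ nodes across $m<n$. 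Because $\grG$ has no isolated points (otherwise the statement is trivial), the choice of distinct children inside any nonempty open neighborhood is always possible. The $\rho$-shrinkage makes each branch sequence $\{y^m_{b\rest\ell}\}_\ell$ Cauchy in $\rho$, hence convergent in $\grG$ to a point $y^m_b$, and passing to the limit in $d$ yields $d(y^m_b,y^m_{b\rest k})\le\sum_{\ell>k}\delta_{m,\ell}<\eps_{m+k}$, so in particular $y^m_b\in U_m$.

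For non-coverage, fix $m$ and $(a_n)_{n>m}$ with $|a_n|=n$. If $y^m_b\in a_nU_n$ for some $n>m$, then $d(g,y^m_b)<\eps_n$ for some $g\in a_n$, and the approximation above yields $d(g,y^m_{b\rest(n-m)})<2\eps_n$; by the $>4\eps_n$-separation, the ball $B_d^\circ(g,2\eps_n)$ contains at most one level-$(n-m)$ node, so $a_n$ captures at most $n$ of the $\prod_{\ell=1}^{n-m}N(m,\ell)$ level-$(n-m)$ nodes. In the product measure on branches of $T_m$, the ``bad at $n$'' set is therefore a union of at most $n$ cylinders of depth $n-m$ and has measure $\le n/\prod_{\ell=1}^{n-m}N(m,\ell)$, and the choice of $N$ makes the sum over $n>m$ strictly less than $1$; hence some branch $b$ avoids every bad set and yields $y^m_b\in U_m\setminus\bigcup_{n>m}a_nU_n$. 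The principal subtlety is the consistent bookkeeping of the three scales — branching $N$, child-proximity $\delta$, and radii $\eps$ — at each recursive step; with $N$ fixed in advance, $\delta$'s chosen small adaptively, and $\eps_n$ computed after the new children are selected, each stage reduces to a routine finite selection, and the auxiliary complete metric $\rho$ (rather than the possibly incomplete $d$) is what guarantees the branch sequences actually converge in $\grG$.
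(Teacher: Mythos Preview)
Your argument is correct and shares with the paper the essential ``two metrics'' device: a left-invariant metric $d$ for quantitative separation and a complete compatible metric for convergence of branch sequences. The difference lies in how the tree of candidates is built and how a surviving branch is extracted. The paper fixes, once and for all, finite sets $b_n\subs U_n$ with $\lvert b_n\rvert=n+1$ whose points are $3\diam U_{n+1}$-separated; for a given $m$ the tree is then \emph{algebraic}, its level-$n$ nodes being the products $g_{m+1}\cdots g_n$ with $g_i\in b_i$, and given $(a_n)_{n>m}$ a single branch is picked by a one-step pigeonhole at each level (since $\lvert b_n\rvert=n+1>n=\lvert a_n\rvert$, some $h\in b_n$ has its translate disjoint from $a_n\cdot U_n$). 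You instead build for each $m$ a separate tree $T_m$ by direct metric selection of points, with exponential branching $N(m,\ell)=2^{m+\ell+1}$, and then use a product-measure Borel--Cantelli estimate to obtain a surviving branch. The paper's route is more economical --- linear branching, a single family $\{b_n\}$ serving every $m$ via left-translation, no measure needed --- and exploits the group multiplication more essentially; yours is heavier but more portable, using the group only through the left-invariance of $d$, and would transplant to any sufficiently ``thick'' perfect complete metric space. One small correction: for a discrete Polish group the lemma is actually false rather than ``trivial''; but the paper's construction tacitly assumes perfectness as well, so this is a shared blind spot rather than a defect specific to your argument.
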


\begin{proof} Let $d$ be a left invariant compatible metric on $\grG$, and let
$e$ be a complete metric on $\grG$. Recursively choose the open sets $U_n$, $n\in\Nset$,
together with finite sets $b_n\subs U_n$ of size $n+1$ so that:
\begin{enumerate}
\item[In $d$:]  The points of $b_n$ are $3\diam U_{n+1}$ apart, and also
$3\diam U_{n+1}$ apart from the complement of $U_n$, while
\item[In $e$:] $\forall m<n$ $\forall \{g_i:m<i<n\}$ with $ g_i\in b_i$
$\diam \prod_{m<i<n} g_i\cdot U_n <\frac1n$.
\end{enumerate}

To verify that the sequence $\{U_n:n\in\Nset\}$ has the desired property assume
that $m\in \Nset$ and a sequence $\{a_n: n>m\}\subs \mathcal P(\grG)$ such
that $|a_n|=n$ for every $n>m$ are given.
Recursively choose $g_n\in b_n$ so that the set
$\prod_{m<i<n} g_i\cdot U_n\cap a_n\cdot U_n=\emptyset$. Such $g_n$ exists as
$d$ is left invariant, hence for every $g\in a_n$ the set $g\cdot U_n$ intersects
at most one of the sets $\prod_{m<i<n} g_i\cdot h \cdot U_n$ for $h\in b_n$,
and $|b_n|=|a_n|+1$. The closures of the sets $\prod_{m<i<n} g_i\cdot U_n$,
for $n>m$ form a decreasing sequence of sets of $e$-diameter converging to $0$,
hence by completeness of $e$ their intersection is a singleton $x\in U_m$ which
is not in $\bigcup_{n>m} a_n\cdot U_n$.
\end{proof}

\begin{thm}[\cite{MR3707641}] \label{submeasure}
There is a non-trivial, left-invariant, outer regular, \si sub\-additive
diffuse submeasure on every Polish group.
\end{thm}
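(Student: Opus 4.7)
The plan is to use the local basis $\{U_n:n\in\Nset\}$ produced by Lemma~\ref{lemma:submeasure} as a gauge for a Method~I (Carath\'eodory-type) construction. Assign to each translate $g\cdot U_n$ the weight $w_n=1/(n+1)$ and put
\[
  \mu(A)=\inf\Bigl\{\sum_{i\in\Nset}\tfrac{1}{n_i+1}:A\subseteq\bigcup_{i\in\Nset}g_i\cdot U_{n_i},\ g_i\in\grG,\ n_i\in\Nset\Bigr\}.
\]
All verifications reduce to inspecting this formula. The standard $\eps/2^k$ argument (for sets $A_k$ choose a cover of weight at most $\mu(A_k)+\eps/2^k$ and concatenate) gives \si subadditivity. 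Since $g'\mapsto g\cdot g'$ is a weight-preserving bijection on covers, $\mu$ is left-invariant. Outer regularity is built in: any $\eps$-near-optimal cover of $A$ has open union $V$ with $A\subseteq V$ and $\mu(V)\le\mu(A)+\eps$. Diffuseness is immediate from the single-term cover $\{x\cdot U_n\}$ of $\{x\}$, of weight $1/(n+1)\to0$.

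The only item that actually needs the strength of Lemma~\ref{lemma:submeasure} is non-triviality, and for this I would prove $\mu(U_0)\ge 1$. Consider any cover $U_0\subseteq\bigcup_{i} g_i\cdot U_{n_i}$ of finite total weight. If some $n_i=0$, that single term already contributes $1/(0+1)=1$. Otherwise every $n_i\ge 1$, and setting $a_n=\{g_i:n_i=n\}$ for $n\ge 1$ gives $U_0\subseteq\bigcup_{n\ge 1}a_n\cdot U_n$. Were $|a_n|\le n$ to hold for every $n\ge 1$, padding each $a_n$ arbitrarily up to size exactly $n$ would yield a cover contradicting Lemma~\ref{lemma:submeasure} taken with $m=0$. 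Hence some $n_0\ge 1$ has $|a_{n_0}|\ge n_0+1$, and that level alone contributes at least $(n_0+1)/(n_0+1)=1$ to the total weight.

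The bulk of the difficulty has already been absorbed by Lemma~\ref{lemma:submeasure}; the only genuine calibration in the present argument is the weight $w_n=1/(n+1)$, chosen precisely so that the ``forbidden'' configuration of $n$ translates of $U_n$ falls just short of weight $1$ per level, while the lemma forces at least one excess translate pushing the weight back up to $1$. Because the construction is Method~I rather than Method~II, outer regularity comes for free and the usual subtleties of Hausdorff-type outer measures (where outer regularity is lost when one takes a supremum over refined covers) are avoided.
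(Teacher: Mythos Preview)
Your proof is correct and follows essentially the same approach as the paper: define $\mu$ by Method~I from the basis of Lemma~\ref{lemma:submeasure} with weights $\sim 1/n$, check the easy properties directly, and derive non-triviality from the covering obstruction in the lemma. The only cosmetic differences are that the paper uses weight $1/n$ rather than your $1/(n+1)$ and asserts the stronger $\mu(U_m)=1/m$ for all $m$ in a single line, whereas you spell out the $m=0$ case in detail; your shift by~$1$ cleanly avoids the $n=0$ indexing issue and your padding argument makes explicit exactly how the lemma is invoked.
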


\begin{proof} Fix a sequence $\{U_n:n\in\Nset\}$ as in Lemma \ref {lemma:submeasure}
and define for $A\subs \grG$:
$$
  \mu(A)=\inf\left\{\sum_{i\in\Nset} \frac1{n_i}: A\subs \bigcup_{i\in\Nset}
  g_i\cdot U_{n_i}\right\}.
$$
It is immediate from the definition that $\mu$ is a diffuse \si additive,
left invariant, outer regular submeasure on $\grG$. To see that $\mu$  non-trivial
it suffices to note that $\mu(U_m)=\frac1m$. To see that $\mu(U_m)$ is not
less than $\frac1m$, note that by the key property of $\{U_n:n\in\Nset\}$,
if $U_m\subs \bigcup_{i\in\Nset} g_i\cdot U_{n_i}$ then
$\sum_{i\in\Nset} \frac1{n_i}\geq\frac1m$.
\end{proof}

The promised characterization is the following:

\begin{thm}[J. Greb\'{\i}k, see \cite{MR3707641}] A subset $A$ of a Polish group $\grG$
is of left strong measure zero if and only if $\mu(A)=0$ for every  left-invariant,
outer regular, countably additive diffuse submeasure on $\grG$.
\end{thm}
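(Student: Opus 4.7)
\emph{Forward direction.} Suppose $A\in\smz(\grG)$ and let $\mu$ be a left-invariant, outer regular, $\sigma$-subadditive, diffuse submeasure on $\grG$. Fix $\varepsilon>0$. Since $\mu(\{1_\grG\})=0$ and $\mu$ is outer regular, for each $n\in\Nset$ I choose an open neighborhood $V_n\ni 1_\grG$ with $\mu(V_n)<\varepsilon\cdot 2^{-n-1}$. As $A$ is Rothberger bounded (the reformulation of strong measure zero in the discussion preceding Proposition~\ref{sigma}), there exist $g_n\in\grG$ with $A\subseteq\bigcup_n g_n\cdot V_n$. Left-invariance and $\sigma$-subadditivity of $\mu$ then give
\[
  \mu(A)\leq\sum_n\mu(g_n\cdot V_n)=\sum_n\mu(V_n)<\varepsilon.
\]
Since $\varepsilon>0$ was arbitrary, $\mu(A)=0$.

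\emph{Backward direction, contrapositive.} Assume $A\notin\smz(\grG)$. I construct a left-invariant, outer regular, $\sigma$-subadditive, diffuse submeasure $\mu$ with $\mu(A)>0$. The plan is to tailor the construction of Theorem~\ref{submeasure} to $A$. The goal is a sequence $(V_n)$ of open neighborhoods of $1_\grG$ such that: (a) $(V_n)$ satisfies the conclusion of Lemma~\ref{lemma:submeasure}, and (b) for every choice of finite sets $a_n\subseteq\grG$ with $\sum_n|a_n|/n<1$, one has $A\not\subseteq\bigcup_n a_n\cdot V_n$. Condition (a) is exactly what the recursion in Lemma~\ref{lemma:submeasure} provides; condition (b) is the quantitative strengthening of non-Rothberger-boundedness that forces $\mu(A)\geq 1$.

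To build such a $(V_n)$, I modify the recursion of Lemma~\ref{lemma:submeasure} by adding at each stage $n$ the additional requirement that $V_n$ is small enough that $A$ cannot be covered using $V_n$ together with finitely many already-chosen translates of the earlier $V_k$ at total cost below a pre-assigned budget (the budgets are chosen to sum to less than $1$ via a standard fusion/diagonal scheme). Failure of strong measure zero is used precisely to carry out this shrinking: at stage $n$ the set $A$ still admits no cheap covering by neighborhoods of $1_\grG$, so a sufficiently small $V_n$ can always be inserted without violating Lemma~\ref{lemma:submeasure}'s separation requirement. Iterating and taking a diagonal assemblies the stage-wise constraints into the global condition (b).

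With $(V_n)$ in hand, set
\[
  \mu(B)=\inf\Big\{\sum_i\tfrac{1}{n_i}\colon B\subseteq\bigcup_i g_i\cdot V_{n_i}\Big\},
\]
exactly as in Theorem~\ref{submeasure}. That proof shows $\mu$ is left-invariant, outer regular, $\sigma$-subadditive and diffuse; condition (b) is equivalent to $\mu(A)\geq 1$, so $\mu(A)>0$, contradicting the hypothesis that $\mu(A)=0$ for every such $\mu$. The main obstacle is step~(b): translating the qualitative failure of strong measure zero into a uniform covering-cost lower bound that survives the Lemma~\ref{lemma:submeasure} recursion; this is a fusion argument that must simultaneously respect the separation constraints of Lemma~\ref{lemma:submeasure} and the budgeted non-covering constraints coming from $A$.
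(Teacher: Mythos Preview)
Your forward direction is correct and matches the paper's argument.

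Your backward direction, however, has a genuine gap. You propose to run the contrapositive: from $A\notin\smz(\grG)$, build a single submeasure $\mu$ with $\mu(A)>0$ by augmenting the recursion of Lemma~\ref{lemma:submeasure} with a ``budgeted non-covering'' constraint (your condition (b)). But you never actually carry this out; the phrases ``standard fusion/diagonal scheme'' and ``taking a diagonal assemblies the stage-wise constraints'' are placeholders, not arguments. In particular, the stage-wise conditions you describe are finitary (finitely many translates of finitely many $V_k$'s), while condition (b) concerns countably infinite covers; passing from one to the other is exactly the content you have not supplied. Moreover, the failure of \smz{} gives you a \emph{single} witnessing sequence $\{W_n\}$ for which one translate of each $W_n$ never covers $A$; your condition (b) demands resistance to arbitrarily many translates of each $V_n$ subject only to a weight bound, and this does not follow without an additional idea.

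The paper avoids this difficulty by arguing \emph{directly} rather than by contrapositive. Assuming $\mu(X)=0$ for every such $\mu$, and given a challenge sequence $\{V_n\}$, the paper takes the Lemma~\ref{lemma:submeasure} basis $\{U_n\}$ refined so that $U_n\subseteq V_n$, picks a fast subsequence $\{n_i\}$, and defines $\mu$ using only the sets $U_{n_{i+1}}$ but with the \emph{shifted} weight $w(U_{n_{i+1}})=1/n_i$. Then $\mu(X)=0$ yields a cover of $X$ by translates with total weight $<1$; the weight shift forces each $U_{n_{i+1}}$ to occur fewer than $n_i$ times, and since the $U_n$'s are decreasing these copies can be injectively reassigned to the $V_n$'s. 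The crucial device is precisely this index/weight shift, which converts a weight bound into a multiplicity bound compatible with the challenge sequence. Your sketch neither isolates nor uses this idea; once you see it, the fusion machinery becomes unnecessary and the argument is a direct computation rather than a recursion with side constraints.
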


\begin{proof}
Assume first that $X\in  \smz(\grG)$,  let $\mu $ be
a left-invariant, outer regular, countably additive diffuse submeasure on $\grG$,
and let $\eps>0$ be arbitrary. As $\mu$ is non-atomic and outer regular,
there is a sequence $\{U_n:n\in\Nset\}$ of neighborhoods of $1_\grG$
such that $\sum_{n\in\Nset}\mu(U_n)<\eps$.
Now, as $X\in  \smz(\grG)$, there is a sequence $\{g_n:n\in\Nset\}\subs\grG$
such that $X\subs \bigcup_{n\in\Nset} g_n\cdot U_n$. By left invariance of $\mu$,
$\mu(X)\leq\sum_{n\in\Nset} \mu(U_n)<\eps$. Hence $\mu(X)=0$.

On the other hand, assume that $X\subs \grG$ has $\mu(X)=0$ for every invariant,
non-atomic, outer regular submeasure $\mu$ on $\grG$, and let $\{V_n: n\in\Nset\}$
be a sequence of open neighbourhoods of $1_\grG$ in $\grG$. Let $\{U_n: n\in\Nset\}$
be a decreasing local basis as in Lemma \ref{lemma:submeasure}, that is such that
for every $m\in\Nset$ and $\{a_n: n>m\}$ such that $|a_n|=n$ for every $n>m$,
$U_m\not \subs \bigcup_{n>m} a_n\cdot U_n$, by passing on to a subsequence,
we may assume that $U_n\subs V_n$ for every $n\in\Nset$.
Let $\{n_i:i\in\Nset\}\subs \Nset$ be  such that $n_{i+1}>n_i$
for every $i\in\Nset$, and let
$\mathcal W=\{U_{n_{i+1}}: i\in\Nset\}$, and $w(U_{n_{i+1}})= \frac1{n_i}$. Then
define a submeasure $\mu$ by putting for $A\subs\grG$
$$
  \mu(A)=\inf\left\{\sum_{i\in\Nset} w(W_i): A\subs \bigcup_{i\in\Nset}
  g_i\cdot W_i\right\}
$$
with each $W_i\in\mathcal W$ and $g_i\in\grG$. This is again a left-invariant,
\si subadditive, non-atomic, outer regular submeasure, with
$\mu(U_{n_{i+1}})=\frac1{n_i}$. Hence $\mu(X)=0$, in particular, there is a sequence
$\{W_j:j\in\Nset \}$ and a sequence $\{q_j:j\in\Nset \}$ such that
$X\subs \bigcup_{j\in\Nset} g_i\cdot W_j$
and $\sum_{j\in\Nset} w(W_j)< 1$. This means that every $U_{n_{i+1}}$ appears
fewer that $n_i$-many times as one of the $W_j$, so there is permutation
$\pi\in S_\infty$ such that $W_{\pi(n)}\subs V_n$ for every $n\in\Nset$,
hence $X\subs \bigcup_{n\in\Nset} g_{\pi(n)}\cdot V_n$. Hence $X\in\smz(\grG)$.
\end{proof}

In the general context of a metric space, Szpilrajn \cite{MR1503125} proved
that every $\smz$ set $X$ has universal measure zero, i.e. has measure zero
for every finite diffuse Borel measure on $X$. It should be noted that unlike
strong measure zero sets, uncountable universal measure zero sets exist in
{\sf ZFC} as shown by Sierpi\'{n}ski and Szpilrajn\cite{SieSzpi}.

\begin{prop}[Szpilrajn \cite{MR1503125}]\label{prop:UMZ}
Strong measure zero sets in separable metric spaces have universal measure zero.
\end{prop}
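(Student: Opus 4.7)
The plan is to show $\mu(X)<\varepsilon$ for an arbitrary $\varepsilon>0$ and an arbitrary finite diffuse Borel measure $\mu$ on $X$; arbitrariness of $\varepsilon$ then gives $\mu(X)=0$. The idea is to carve out a subset $Y\subseteq X$ with $\mu(X\setminus Y)<\varepsilon/2$ on which the concentration of $\mu$ is controlled uniformly at a prescribed sequence of scales $1/K_n$, and then apply the SMZ definition to $Y$ with diameters $\delta_n$ comparable to $1/K_n$ to produce a cover whose total $\mu$-mass is at most $\varepsilon/2$.

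For $n,k\in\Nset$ set $X_n^{(k)}=\{x\in X:\mu(B^\circ(x,1/k))<\varepsilon\cdot 2^{-n-2}\}$. The map $x\mapsto\mu(B^\circ(x,1/k))$ is lower semicontinuous (if $x_m\to x$ then $B^\circ(x,1/k)\subseteq\liminf_m B^\circ(x_m,1/k)$), so each $X_n^{(k)}$ is Borel, in fact $F_\sigma$. Diffuseness of $\mu$ combined with continuity from above yields $\mu(B^\circ(x,1/k))\downarrow\mu(\{x\})=0$ as $k\to\infty$, so for each $n$ the sets $X_n^{(k)}$ increase with $k$ to $X$. Finiteness of $\mu$ then permits the choice of $K_n\in\Nset$ with $\mu(X\setminus X_n^{(K_n)})<\varepsilon\cdot 2^{-n-2}$, and $Y:=\bigcap_n X_n^{(K_n)}$ satisfies $\mu(X\setminus Y)<\sum_n\varepsilon\cdot 2^{-n-2}\leq\varepsilon/2$.

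Since strong measure zero is hereditary, $Y$ is itself SMZ; the definition applied with $\delta_n=1/(2K_n)$ gives a cover $Y\subseteq\bigcup_n U_n$ with $\diam U_n\leq\delta_n$. For each $n$ with $U_n\cap Y\neq\emptyset$, fix $y_n\in U_n\cap Y$; the inequality $\diam U_n\leq 1/(2K_n)<1/K_n$ forces $U_n\subseteq B^\circ(y_n,1/K_n)$, and $y_n\in X_n^{(K_n)}$ then yields $\mu(U_n)<\varepsilon\cdot 2^{-n-2}$. Summing, $\mu(Y)\leq\sum_n\mu(U_n\cap Y)<\varepsilon/2$, whence $\mu(X)\leq\mu(Y)+\mu(X\setminus Y)<\varepsilon$, as desired.

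The only non-trivial step is the construction of $Y$: diffuseness by itself yields merely the pointwise decay $\mu(B^\circ(x,r))\to 0$, and the leverage needed to promote this into a scale $K_n$ that is uniform on a $\mu$-large subset of $X$ comes from the finiteness of $\mu$ together with the Borel structure of the sets $X_n^{(k)}$. Once such a $Y$ is in hand, SMZ closes the argument with essentially no further work.
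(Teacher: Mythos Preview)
Your proof is correct. The only cosmetic point is that the covering sets $U_n$ produced by the \smz{} definition need not be Borel, so strictly speaking $\mu(U_n)$ is not defined; but you immediately bound $U_n\cap Y$ by the open ball $B^\circ(y_n,1/K_n)$, which \emph{is} Borel, so the argument goes through.

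Your route is genuinely different from the paper's. The paper proves the stronger intermediate fact that $f(r):=\sup\{\mu(E):\diam E\leq r\}\to 0$ as $r\to 0$: assuming otherwise, one extracts sets $E_n$ with $\diam E_n\searrow 0$ and $\mu(E_n)\geq\eps$, observes that $E=\limsup E_n$ has $\mu(E)\geq\eps$, passes to a subsequence with a common point, and concludes that $\diam E=0$, contradicting diffuseness. With this global uniformity in hand, a single application of \smz{} to $X$ finishes. You instead run an Egorov-type excision: remove a set of measure $<\eps/2$ to obtain $Y$ on which ball measures are controlled at the prescribed scales, then apply \smz{} to $Y$. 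Your approach avoids the limsup argument entirely and is perhaps more modular; the paper's approach yields a cleaner structural statement about finite diffuse measures (uniform decay of $f$) that may be of independent interest.
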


\begin{proof}
Aiming towards contradiction, suppose that $X$ is $\smz$  yet there is a
diffused Borel measure $\mu$ on $X$ such that $\mu(X)=1$.
Define a function $f:(0,\infty)\to[0,1]$ by
$$
  f(r)=\sup\{\mu(E):\diam E\leq r\}.
$$
We claim that $\lim_{r\to0}f(r)=0$. Otherwise there is $\eps>0$ and a sequence
of sets $E_n$ such that $\diam E_n{\searrow} 0$ and $\mu(E_n)\geq\eps$.
Let $E=\bigcap_{n\in\Nset}\bigcup_{m\geq n}E_n$. Then clearly $\mu(E)\geq\eps>0$.
In particular $E\neq\emptyset$, i.e., there is $I\in[\Nset]^\Nset$ such that
$\bigcap_{n\in I}E_n\neq\emptyset$.
Suppose without loss of generality that $I=\Nset$. Since any two sets $E_n,E_m$
have a common point,
we have $\diam(\bigcup_{m\geq n}E_n)\leq 2\diam E_n$. Therefore
$\diam E\leq 2\lim_{n\to 0}\diam E_n=0$, which contradicts $\mu(E)>0$.
We proved that $\lim_{r\to0}f(r)=0$.
Therefore there is, for each $n\in\Nset$,  $\eps_n>0$ such that $\sum_nf(\eps_n)<1$.
Since $X$ is $\smz$, there are sets $U_n$ such that $\diam U_n<\eps_n$ that cover $X$.
It follows that
$$
  1=\mu(X)\leq\sum_n \mu(U_n)\leq\sum_n f(\diam U_n)\leq\sum_n f(\eps_n)<1,
$$
the desired contradiction.
\end{proof}

\section{The Galvin-Mycielski-Solovay Theorem in Polish groups}

In this section we study the Galvin-Mycielski-Solovay theorem
in the context of an arbitrary Polish group $\grG$. We denote by $\MM(\grG)$,
or simply by $\MM$ if there is no danger of confusion, the
ideal of meager subsets of  $\grG$.
Much of this section exists thanks to the following simple yet crucial
observation due to Prikry:

\begin{prop}[Prikry \cite{prikry}] \label{prop:triv}
Let $\grG$ be a separable group, and
let $S\subs\grG$ be such that $S\cdot M\neq \grG$ for all $M\in\MM(\grG)$. Then
$S\in \smz(\grG)$.
\end{prop}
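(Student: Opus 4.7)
The plan is to exploit the hypothesis by cooking up, out of any sequence of neighborhoods of $1_\grG$, a single meager set $M$ whose complement in $\grG$ directly encodes a Rothberger-bounded cover of $S$. Since Rothberger boundedness w.r.t.\ any left-invariant compatible metric is equivalent (in the separable setting) to having strong measure zero for that metric, this yields $S \in \smz(\grG)$.

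In more detail, fix an arbitrary sequence $\seq{V_n}$ of open neighborhoods of $1_\grG$; we must produce $g_n \in \grG$ with $S \subseteq \bigcup_n g_n V_n$. Shrinking each $V_n$, I may assume it is symmetric, i.e.\ $V_n = V_n^{-1}$, because $V \cap V^{-1}$ is an open symmetric neighborhood of $1$ whenever $V$ is open. Next, partition $\Nset$ into infinitely many infinite pieces $\{I_k : k \in \Nset\}$, and for each $k$ let $\{c^k_n : n \in I_k\}$ enumerate a countable dense subset of $\grG$ (which exists by separability). Set
$$
  W_k \;=\; \bigcup_{n \in I_k} V_n \cdot c^k_n.
$$
Each $W_k$ is open as a union of open sets, and is dense because $c^k_n = 1 \cdot c^k_n \in V_n \cdot c^k_n \subseteq W_k$ and the $c^k_n$ are dense in $\grG$. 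Hence $F_k = \grG \setminus W_k$ is closed nowhere dense, and
$$
  M \;=\; \bigcup_{k\in\Nset} F_k
$$
is a meager (indeed $F_\sigma$ meager) subset of $\grG$.

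Now apply the hypothesis to this $M$: there exists $g \in \grG \setminus S\cdot M$, i.e.\ $s^{-1}g \notin M$ for every $s \in S$, so $s^{-1}g \in \bigcap_k W_k$. Fix $s\in S$ and $k\in\Nset$; pick $n = n(s,k) \in I_k$ with $s^{-1}g \in V_n \cdot c^k_n$, say $s^{-1}g = v \cdot c^k_n$ with $v \in V_n$. Then
$$
  s \;=\; g\,(c^k_n)^{-1} v^{-1} \;\in\; g\,(c^k_n)^{-1} V_n^{-1} \;=\; g\,(c^k_n)^{-1} V_n,
$$
using symmetry of $V_n$. Defining $g_n = g\,(c^k_n)^{-1}$ for the unique $k$ with $n \in I_k$, we conclude $S \subseteq \bigcup_n g_n V_n$, which is exactly Rothberger boundedness of $S$ for the sequence $\seq{V_n}$.

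The only step demanding care is the last algebraic manipulation, where the choice of left versus right multiplication must be tracked consistently: the hypothesis $S \cdot M \neq \grG$ is about right-translates of $S$ by members of $M$, whereas the target cover uses left-translates $g_n V_n$. This is what forces the particular construction $W_k = \bigcup_{n \in I_k} V_n \cdot c^k_n$ (rather than the more symmetric-looking $c^k_n V_n$) together with the symmetry of the $V_n$. Apart from that bookkeeping, the argument is entirely a Baire category construction and uses nothing beyond separability of $\grG$ and continuity of the group operations; the meagerness of $M$ requires only that the $W_k$ be open dense, which is immediate.
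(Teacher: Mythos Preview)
Your proof is correct and follows essentially the same approach as the paper's. The paper's version is slightly slicker: it builds a single dense open set $U=\bigcup_n g_n U_n$ (so the partition of $\Nset$ into the $I_k$ is unnecessary) and handles the left/right bookkeeping by taking $M=\grG\setminus U^{-1}$ rather than by first symmetrizing the $V_n$, which also makes $M$ nowhere dense rather than merely meager.
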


\begin{proof}
Let $S$ be as above, and let  $\{U_n:n\in\Nset \}$ be a
family of open neighbourhoods of $1$ in $\grG$.
Let $\{g_n:n\in\Nset \}\subs \grG$ be such that
$U=\bigcup_{n\in\Nset} g_n \cdot U_n$ is dense open in $\grG$.
Then $U^{-1}$ is  dense open in $\grG$, the inverse being a homeomorphism, so
$M=\grG\setminus U^{-1}$ is nowhere dense in $\grG$. As $S\cdot M\neq \grG$,
there is $x\in\grG\setminus S\cdot M$, that is
$S\subs x\cdot U=\bigcup_{n\in\Nset} x\cdot g_n\cdot U_n$. Hence, $S\in\smz(\grG)$.
\end{proof}

 As mentioned in the introduction, Galvin, Mycielski,
and Solovay \cite{GMS,MR3696064} answered Prikry's question by showing that the reverse
inclusion holds
for $\Rset$. The same was recently proved  for all locally compact groups
by Kysiak \cite{kysiak} and Fremlin \cite{fremlin5}, independently.
We shall present a proof of their theorem (the converse of Prikry's result
for locally compact groups) here.
Our proof follows \cite{MR3453581}.

Call a subset $N$ of a topological group $\grG$
\emph{uniformly nowhere dense} if for every neighborhood $U$ of $1$ there is
a neighborhood $V$ of $1$  such that for every $x\in \grG$ there is a
$g\in \grG$ such that $g\cdot V\subs x\cdot U\setminus N$.
A set $M\subs \grG$ is \emph{uniformly meager} if it can be written
as a union of countably many
uniformly nowhere dense sets.
We denote the family of all uniformly meager
subsets of $\grG$ by $\UM(\grG)$ (or simply $\UM$).
The following generalizes~\cite[Theorem 4]{MR3696064}.

\begin{prop}[\cite{MR3453581}]\label{prop:nontriv}
Let $\grG$ be a Polish group which is either locally compact or
\TSI{}, and let  $S \in \smz(\grG)$.
Then $S\cdot M\neq \grG$ for all $M\in \UM(\grG)$.
\end{prop}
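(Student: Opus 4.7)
The plan is to construct $x\in\grG\setminus S\cdot M$ by a fusion argument. Write $M=\bigcup_{n\in\Nset}N_n$ with each $N_n$ uniformly nowhere dense; the goal becomes finding $x$ with $s^{-1}x\notin N_n$ for every $s\in S$ and every $n$. In the \TSI{} case I fix a compatible complete bi-invariant metric $d$; the locally compact case is treated analogously, either through an auxiliary bi-invariant metric on a suitable subgroup or via a Haar-measure argument in the spirit of the classical Galvin--Mycielski--Solovay proof. Bi-invariance is the central technical tool: it yields $d(s^{-1}x,s_k^{-1}x)=d(s,s_k)$, which allows any SMZ cover of $S$ by small balls centered at points $s_k$ to lift to a controlled cover of $S^{-1}x$.

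First I prepare the two parameter sequences. For each $n$, apply the uniform nowhere-denseness of $N_n$ to a suitable input neighborhood of $1_\grG$, obtaining an output neighborhood $W_n$ of $1_\grG$; pick $\alpha_n>0$ with $B(1_\grG,2\alpha_n)\subseteq W_n$. Uniform nowhere-denseness then says that every translate of the input ball contains a ball of radius $\alpha_n$ sitting at distance $\geq\alpha_n$ from $N_n$. For each $n$, apply the SMZ property of $S$ to a sequence $\{\eps_k^{(n)}\}_k$ of positive reals (with $\eps_k^{(n)}<\alpha_n/4$), obtaining a cover $S\subseteq\bigcup_k B(s_k^{(n)},\eps_k^{(n)})$.

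Next I enumerate $\Nset\times\Nset$ as $\{(k_j,n_j)\}_{j\in\Nset}$ and carry out the fusion. Inductively choose closed balls $B(x_j,r_j)\subseteq B(x_{j-1},r_{j-1})$ with $r_j\to 0$ so that at each stage $B(x_j,r_j)$ avoids the piece $s_{k_j}^{(n_j)}\cdot B(1_\grG,\eps_{k_j}^{(n_j)})\cdot N_{n_j}$. Using bi-invariance this condition unfolds to $B((s_{k_j}^{(n_j)})^{-1}x_j,\,r_j+\eps_{k_j}^{(n_j)})\cap N_{n_j}=\emptyset$, which the hole provided by uniform nowhere-denseness of $N_{n_j}$ accommodates as soon as $r_j+\eps_{k_j}^{(n_j)}$ is smaller than the hole size. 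Letting $x=\lim_j x_j$, we see that $x$ lies in no $s_k^{(n)}B(1_\grG,\eps_k^{(n)})N_n$; since $S\subseteq\bigcup_k s_k^{(n)}B(1_\grG,\eps_k^{(n)})$ for each $n$, this gives $x\notin S\cdot N_n$ for every $n$, hence $x\notin S\cdot M$.

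The main obstacle is the interlocked choice of the three parameter sequences $\{r_j\}$, $\{\eps_k^{(n)}\}$, and $\{\alpha_n\}$, where moreover the hole size extracted from uniform nowhere-denseness actually depends on the size of the input neighborhood, which is forced by the current fusion scale $r_{j-1}$. The basic inequality $r_j+\eps_{k_j}^{(n_j)}<\alpha_{n_j}$ entangles all three sequences at every stage. Breaking the apparent circularity requires a careful enumeration of $\Nset^2$ (for instance by diagonal layers, handling pairs $(k,n)$ with $k+n$ small before pairs with $k+n$ large) and a strong-induction definition of the parameters so that each depends only on previously fixed data. In the locally compact case, where a bi-invariant metric need not exist, the ball arithmetic above is not directly available; Haar measure must be used instead to locate the required hole, at the cost of a more measure-theoretic inductive step.
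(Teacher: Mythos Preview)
Your fusion strategy is the right general shape, and in the \TSI{} case your use of bi-invariance to rewrite the avoidance condition as $B((s_{k}^{(n)})^{-1}x,\,r+\eps)\cap N_n=\emptyset$ is correct. But the proof as written has a genuine gap: the ``preparation'' paragraph fixes $\alpha_n$ once and for all from some unspecified input neighborhood, while your own final paragraph correctly observes that the hole size you actually need at stage $j$ depends on $r_{j-1}$, not on any pre-chosen input. These two paragraphs contradict each other, and you never carry out the promised ``strong-induction definition''. A reader cannot reconstruct the missing argument from what you have written; you would need to first determine the entire sequence $\{r_j\}$ from the enumeration and the uniform-nowhere-dense output functions alone, and only then choose each $\eps_k^{(n)}$ small relative to $r_j$ at the stage $j$ where $(k,n)$ is handled. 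That can be done, but it must be done.

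The paper sidesteps the whole circularity by two devices you do not use. First, it takes the $N_n$ \emph{increasing}, so that avoiding $N_n$ at stage $n$ automatically avoids all earlier $N_m$. Second, it uses the $\lambda$-cover form of $\smz$: a single sequence $\{g_nU_n\}$ with each $s\in S$ in infinitely many $g_nU_n$. Together these linearize the fusion over $\Nset$ rather than $\Nset\times\Nset$: one first builds a nested sequence $\{U_n\}$ with $U_n$ chosen from $U_{n-1}$ via uniform nowhere denseness of $N_n$ (this is the paper's display~(2)), then invokes $\smz$ once, then fuses. No interlocking of parameters arises.

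For the locally compact case your sketch is too vague to count as a proof. A locally compact Polish group need not admit a bi-invariant metric, nor is there an obvious ``suitable subgroup'' carrying one, and you do not say what a Haar-measure argument would actually do. The paper instead proves a compactness lemma (for any compact nowhere dense $P$ and compact $C,K$, the hole size $\delta$ can be chosen uniformly in $x\in C$ and $y\in K$ via a semicontinuity/minimum argument) and then runs the fusion with compact pieces $P_n\upto M$ and compact exhaustion $K_n\upto\grG$.
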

\begin{proof}
Assume first that $\grG$ admits a invariant metric~$d$. Recall that every
invariant metric on a Polish group is complete. Let $N$
be uniformly nowhere dense subset of $\grG$. Note that for every $y\in\grG$ and
an open set $U\subs\grG$,
$y\cdot U\cdot N=U\cdot y\cdot N$, and $y\cdot N$
is uniformly nowhere dense. It follows that for every uniformly nowhere dense
$N\subs \grG$
\begin{align}
  \forall U\text{open } \exists V\text{open }\forall x,y\in\grG\ \exists z\in\grG\
  \ z\cdot V\subs x\cdot U\setminus V\cdot y\cdot N.
\end{align}

Now, fix a $\smz$ set $S$ and a uniformly meager set $M$ written as the union of
an increasing sequence $\seq{N_n}$ of uniformly nowhere dense sets. Then
there is a sequence $\seq{U_n}$ of open subsets of $\grG$, such that for every $n>0$
\begin{align}
\forall x,y\in\grG\ \exists z\in\grG\ \ z\cdot U_n\subs x\cdot U_{n-1}
\setminus U_n\cdot y\cdot N_n.
\end{align}
As $S$ is  $\smz$, for every sequence $\{U_n: n\in\Nset\}$ of open sets
(of diameter converging to $0$)
there is a sequence $\{g_n: n\in\Nset\}\subs \grG$ such that each $s\in S$
is contained in infinitely many of
the sets $g_n\cdot U_n$. Applying (2) recursively there is
a
sequence $\seq{x_n}$ of elements of $\grG$ such that for every $n\in\Nset$
$$
  x_{n+1}\cdot g_{n+1}\cdot U_{n+1}\subs  x_n\cdot g_n\cdot U_n
  \setminus (g_{n+1}\cdot U_{n+1}\cdot N_{n+1}).
$$
The sequence $\seq{x_n}$ is Cauchy, let $x$ be its limit, i.e.,
$\{x\}=\bigcap_{n\in\Nset}  x_n\cdot g_n\cdot U_n$.
Then $x\not\in \bigcup_{n\in\Nset}  g_n\cdot U_n\cdot N_n\supseteq S\cdot M$,
 as the sequence $\seq{N_n}$ is increasing and
every element of $S$ is contained in infinitely many
of the $g_n\cdot U_n$ (for every $(s,m)\in S\times M$ there is an $n\in \Nset$
such that $s\in g_n\cdot U_n$ and $m\in N_n$).

\bigskip

Now if $\grG$ is locally compact, the proof proceeds along similar lines.
 Only (1) is replaced by the following lemma.

\begin{lem}\label{comp}
Let $\grG$ be a locally compact Polish group equipped with a complete metric $d$,
and let $U\subs \grG$ be an open set with compact closure $C=\clos U$ and
$P\subs \grG$ be compact nowhere dense. Then
$$
  \forall\eps>0\ \exists\del>0\ \forall x\in C\ \forall y\in K\ \exists z\in C
  \quad B(z,\del)\subs B(x,\eps)\setminus (B(y,\del)\cdot  P).
$$
\end{lem}
\begin{proof}
Fix $\eps>0$ and define
$f:C\times K\to\Rset$  by
$$
  f(x,y)=\sup\{t:\exists z\in C\ B(z,t)\subs B^\circ(x,\eps)\setminus y\cdot P)\}.
$$
Then $f$ is positive on $C\times K$ and attains its (positive) minimum.

To see that, consider, for each $z\in C$, the functions

\begin{align}
  g_z(x) &=\dist(z,X\setminus B^\circ(x,\eps)),\quad x\in C \notag \\
  h_z(y) &=\dist(z,y\cdot  P),\quad y\in K \notag
%
\intertext{and note that}
  f(x,y) &=\sup_{z\in C}\min(g_z(x),h_z(y)).\label{fgh}
\end{align}

Using compactness it is easy to see that, for each $z\in C$, the function $h_z$ is
lower semicontinuous and that while $g_z$ does not have to be, it has the following
lower-semicontinuity property: if $x_n\to x$ and $g_z(x_n)\to 0$, then $g_z(x)=0$.

Now suppose that there are $(x_n,y_n)\in C\times K$ such that $f(x_n,y_n)\to0$.
Since $C,K$ are compact, passing to subsequences  we may assume
$(x_n,y_n)\to(x,y)\in C\times K$.
Use \eqref{fgh} and the semicontinuity properties of $g_z$ and $h_z$
to conclude that since $f(x_n,y_n)\to0$,
for any $z$ either $g_z(x_n)\to0$ and then $g_z(x)=0$, or else $h_z(y_n)\to0$
and then $h_z(y)=0$. Use \eqref{fgh} again to conclude that $f(x,y)=0$, the desired
contradiction proving that there is $\eta>0$ such that $f(x,y)>\eta$ for
all $x,y$.
It follows that
$$
  \forall x\in C\ \forall y\in K\ \exists z\in C
  \quad B(z,\eta)\subs B(x,\eps)\ \wedge B(z,\eta)\cap y\cdot P=\emptyset.
$$
The latter of course yields
$B(z,\frac\eta2)\cap B(y\cdot  P,\frac\eta2)=\emptyset$.
On the other hand,
there is $\xi>0$ such that
$$
  \forall y\in K\quad
  B(y,\xi)\cdot P\subs B( y\cdot P,\tfrac\eta2).
$$
It follows that $B(z,\frac\eta2)\cap B(y,\xi)\cdot P=\emptyset$.
Thus letting $\del=\min\{\frac\eta2,\xi\}$ yields the lemma.
\end{proof}

To conclude, write $\grG$ as the union of an increasing sequence of open sets with
compact closures
$K_n$,  and write a meager set $M$ as the union of an increasing sequence of
compact nowhere dense sets
$P_n$.
Choose $x_0\in \grG$ and $\eps_0>0$ such that $B(x_0,\eps_0)$ is compact.
Let $C=B(x_0,\eps_0)$.
By the above lemma there is a sequence $\Seqeps$ such that for every $n>0$
\begin{equation}\label{eq111}
  \forall x\in C\ \forall y\in K_n\ \exists z\in C
  \quad B(z,\eps_n)\subs B(x,\eps_{n-1})\setminus  B(y,\eps_n)\cap K_n)\cdot  P_n.
\end{equation}
We may of course suppose that $\eps_n\to0$.
Since $S$ is $\smz$, there is a cover $\{E_n\}$ of $S$ such that
$\diam E_n<\eps_n$ for all $n$ such that each point of $S$ is covered by infinitely many $E_n$'s).

For each $n$ there is $y$ such that
$E_n\subs B(y,\eps_n)$. Therefore, using repeatedly \eqref{eq111}, there is
a sequence $\seq{x_n}$ in $C$ such that for all $n\in\Nset$
$$
  B(x_{n+1},\eps_{n+1})\subs B(x_n,\eps_n)
  \setminus (E_{n+1}\cap K_{n+1})\cdot P_{n+1}.
$$
Let $x$ be the unique point of $\bigcap_{n\in\Nset}B(x_n,\eps_n)$
(there is one, since $B(x_0,\eps_0)$ is compact and is unique as $\eps_n\to0$).
Then $x\notin\bigcup_{n\in\Nset}(E_n\cap K_n)\cdot P_n$.

Thus, to prove that $x$ is not covered by $S\cdot M$ it suffices to show that
$S\times M\subs\bigcup_{n\in\Nset}(E_n\cap K_n)\times P_n$.
Let $(s,m)\in S\times M$. There is $k$ such that $(s,m)\in K_k\times P_k$.
Since there are infinitely many $n$ such that $s\in E_n$, there is $n\geq k$
such that $s\in E_n$,
hence $s\in E_n\cap K_k\subs E_n\cap K_n$. Also $m\in P_k\subs P_n$. Therefore
$(s,m)\in (E_n\cap K_n)\times P_n$. The desired inclusion is proved.
\end{proof}

The Galvin-Mycielski-Solovay/Fremlin/Kysiak result follows from the fact that
in a locally compact group every meager set is uniformly meager:

\begin{prop}[\cite{MR3453581}]\label{prop: M_equals_UM} A Polish group $\grG$ is
locally compact if and only if $\mathcal{M}(\grG)=\UM(\grG)$.
\end{prop}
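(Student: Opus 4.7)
The easy inclusion $\UM(\grG) \subseteq \MM(\grG)$ holds in any Polish group: a uniformly nowhere dense $N$ is nowhere dense, since for any nonempty open $W \subseteq \grG$ we may pick $x \in W$ and a neighborhood $U$ of $1_\grG$ with $xU \subseteq W$, and the defining condition then yields an open set $gV \subseteq xU \setminus N \subseteq W \setminus \overline N$. So the content of the proposition lies in the two remaining implications.

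For the forward implication, assume $\grG$ is locally compact. Then $\grG$ is $\sigma$-compact, so every closed subset is $\sigma$-compact, and hence every meager set is contained in a countable union of compact nowhere dense sets. It therefore suffices to show that every compact nowhere dense $P \subseteq \grG$ is uniformly nowhere dense. Fix such $P$ and a neighborhood $U$ of $1_\grG$ with $\overline U$ compact, so that $K := \overline{P \cdot U^{-1}}$ is compact as well. For $x \notin K$ one has $xU \cap P = \emptyset$, and the defining condition is met trivially with $g = x$, $V = U$. For $x \in K$, fix a compact open $C \supseteq K$ and invoke Lemma~\ref{comp} with this $C$, with the auxiliary compact set taken to be $\{1_\grG\}$, and with $r$ chosen so that $B^\circ(1_\grG, r) \subseteq U$: it delivers a single $\delta > 0$ such that for every $x \in C$ some $z \in C$ satisfies $B(z, \delta) \subseteq B(x, r) \setminus (B(1_\grG, \delta) \cdot P) \subseteq xU \setminus P$. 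Taking $V := B^\circ(1_\grG, \delta)$ and $g := z$ then witnesses uniform nowhere density of $P$.

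For the reverse implication, argue by contrapositive: if $\grG$ is not locally compact, we construct a meager set that fails to be uniformly meager. Since no neighborhood of $1_\grG$ is totally bounded, a small adjustment of the construction in Lemma~\ref{lemma:submeasure} produces a decreasing local basis $\{U_n\}$ at $1_\grG$ enjoying both the non-coverability property $U_m \not\subseteq \bigcup_{n > m} b_n U_n$ (whenever $|b_n| = n$) and the property that each $U_n$ contains an infinite pairwise $U_{n+1}$-separated set. Using these data and a countable dense subset of $\grG$, we recursively build a closed nowhere dense set $F \subseteq \grG$ with the \emph{spread} property $F \cap xU_n \neq \emptyset$ for every $x \in \grG$ and every $n$. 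Now $F$ is meager, but any decomposition $F = \bigcup_k F_k$ into uniformly nowhere dense $F_k$ would furnish neighborhoods $V_k$ of $1_\grG$ witnessing uniform nowhere density of $F_k$; a diagonal application of the non-coverability property of $\{U_n\}$ to the sequence $\{V_k\}$ then produces a translate $xU_n$ disjoint from $\bigcup_k F_k = F$, contradicting the spread property. The principal obstacle is precisely this construction and diagonalization: one must maintain nowhere density of $F$ while ensuring its translates meet every small neighborhood, and then defeat every countable uniform decomposition, following the blueprint in \cite{MR3453581}.
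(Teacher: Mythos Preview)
Your forward direction is workable but more circuitous than the paper's: rather than invoking Lemma~\ref{comp}, the paper simply shows that in a \emph{compact} metric space every nowhere dense set is uniformly nowhere dense, using a finite $\eps/2$-net $F$ and taking $\delta=\min_{x\in F}\delta_x$; this avoids your case split and the appeal to the earlier lemma. Two small slips in your version: a general locally compact Polish group need not have any compact \emph{open} set (think of $\Rset$), so ``compact open $C\supseteq K$'' should read ``open set with compact closure containing $K$''; and to pass from $B(z,\delta)$ to $zV$ you are tacitly assuming the metric in Lemma~\ref{comp} is left-invariant, which should be said.

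The reverse direction, however, has a genuine gap. Your ``spread property'' --- $F\cap xU_n\neq\emptyset$ for \emph{every} $x\in\grG$ and \emph{every} $n$ --- forces $F$ to be dense (the translates $xU_n$ form a base), so no closed nowhere dense $F$ can have it. This is not a repairable detail: the whole shape of the diagonal argument you sketch relies on $F$ meeting every small translate, and that is incompatible with nowhere density. The paper's construction is of a different nature. One builds a tree $\{U_s:s\in\Nset^{<\Nset}\}$ of regular closed sets with shrinking diameters, such that (a) $\interior(U_s\setminus\bigcup_n U_{s\concat n})\neq\emptyset$, guaranteeing that $N=\bigcap_j\bigcup_{|s|=j}U_s$ is nowhere dense, and (b) inside each $U_s$ there is an infinite discrete family of balls of fixed radius, each of which contains some $U_{s\concat n}$. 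The set $N$ is then homeomorphic to $\Pset$, hence Baire, and if $N$ were covered by closed uniformly nowhere dense sets, one of them would contain some $N\cap U_s$; uniform nowhere density of this piece is then contradicted by property~(b). The key point you are missing is that the argument is \emph{local}: one does not globally defeat the cover, but uses Baire Category inside $N$ to reduce to a single uniformly nowhere dense piece on a single $U_s$, and then the tree structure does the work.
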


\begin{proof}
We shall see first that {\it $\mathcal{M}(X)=\UM(X)$ for
every locally compact metric space~$X$}.

To that end it suffices to see  that every
nowhere dense subset of a compact space is, in fact, uniformly
nowhere dense. Let $N$ be a nowhere dense subset  of a compact space
$X$,  and  let $\eps>0$. Let $F$ be a finite subset of $X$ such that
$Z=\bigcup_{x\in F} B(x, \frac{\eps}{2})$.
For every $x\in F$ let $y_x\in B(x, \frac{\eps}{2})$ and $\delta_x>0$ be such that
$ B(y_x, \delta_x)\subs B(x, \frac{\eps}{2})\setminus N$. Then
$\delta=\min\{\delta_x:x\in F\}$ works as $B(x, \frac{\eps}{2})\subs B(z, \eps)$
whenever $z\in B(x, \frac{\eps}{2})$.


\smallskip

On the other hand,  {\it $\mathcal{M}(X)\neq\UM(X)$ for every nowhere locally
compact complete metric space  $X$}.

To see this let $X$ be nowhere locally compact with a complete metric $d$.
Then for every $U$ with non-empty interior there is an $\eps^U>0$ and a
pairwise disjoint family $\{V_{k}^U:k\in\Nset\}$ of open balls of radius
$\eps^U$ contained in $U$.

We shall construct a nowhere dense set $N$ which is not uniformly meager.
To do that we recursively construct a family
$\{U_s: s\in \Nset^{<\Nset}\}$ of non-empty regular closed\footnote{Recall
that a set $U$ is \emph{regular closed} if
$U$ is the closure of the interior of $U$.} sets so that
\begin{enum}
 \item[(1)] $\diam U_s\leq 2^{-|s|}$ for every $s\in \Nset^{<\Nset}$,

 \item[(2)] $\bigcup_{n\in\Nset} U_{s^\smallfrown n}\subs U_s  $
 for every $s\in \Nset^{<\Nset}$,

 \item[(3)] $U_{s^\smallfrown n} \cap U_{s^\smallfrown m} = \emptyset$
 for every $s\in \Nset^{<\Nset}$ and any two distinct $m,n \in \Nset$,

 \item[(4)] $\interior (U_s\setminus \bigcup_{n\in\Nset} U_{s^\smallfrown n})\neq\emptyset$
 for every $s\in \Nset^{<\Nset}$,

 \item[(5)] for all $s\in \Nset^{<\Nset}$ and $k\in\Nset$ and $x\in V_{k}^{U_s}$
 there is a $n\in\Nset$ such that $ U_{s^\smallfrown n}
\subs B(x, 2^{-k})$.

 \end{enum}
To do this is straightforward.

Having constructed such a family,
let $N=\bigcap_{j\in\Nset}\bigcup_{|s|=j} U_s$. This is the required set:

It is nowhere dense as a non-empty open set $U$
is either disjoint from $N$, or contains $U_s$ for some  $s\in \Nset^{<\Nset}$.
Then, however,
$\emptyset \neq \interior (U_s\setminus \bigcup_{n\in\Nset}
U_{s^\smallfrown n})\subs U\setminus N$ by the property~(4) above.

Now we will prove that $N$ is not uniformly meager in $X$. The set $N$ is
naturally homeomorphic to $\Pset$ (see properties~(1)--(3) above),
hence satisfies the Baire Category Theorem.
Aiming toward a contradiction assume that $N\subs \bigcup_{l\in\Nset} N_l$,
where each $N_l$ is a closed uniformly nowhere dense subset of $X$. By the
Baire Category Theorem applied to $N$ there is an $s\in \Nset^{<\Nset}$
and an $l\in\Nset$ such that $U_s\cap N\subs N_l$, hence $U_s\cap N$ is uniformly
nowhere dense. So, there is a $\delta>0$ as in the definition of uniformly
nowhere dense corresponding  to $\eps^{U_s}$.
Consider $V_{k}^{U_s}$, for
$2^{-k}<\delta$.
Then, on the one hand
there is an $x\in V_{k}^{U_s}$ such that 
$B(x, 2^{-k})\subs V_{k}^{U_s}\setminus N$,
and
on the other hand, there is
(see property~(5) above)
an $n\in\Nset$ such that 
$\emptyset \neq N \cap U_{s^\smallfrown n} \subs B(x, 2^{-k})$,
which is a contradiction.

 The result follows as every Polish group is either locally compact or nowhere locally compact.
\end{proof}

And finally:
\begin{thm}[Fremlin \cite{fremlin5}, Kysiak \cite{kysiak}]\label{kysiak}
Let $\grG$ be a locally compact Polish group. A
set $A\subs\grG$  is of strong measure zero
if and only if $A\cdot M\neq\grG$ for every meager set $M\subs\grG$
\end{thm}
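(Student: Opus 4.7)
The plan is to obtain the theorem as an immediate consequence of the three preceding propositions; essentially no new work is required, only the right assembly.

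For the reverse implication, assume $A\cdot M\neq\grG$ for every meager $M\subs\grG$. Since a Polish group is in particular a separable group, this is precisely the hypothesis of Proposition~\ref{prop:triv} (Prikry), which yields $A\in\smz(\grG)$. Note that local compactness plays no role here; Prikry's argument works in any separable topological group.

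For the forward implication, assume $A\in\smz(\grG)$ and fix a meager $M\subs\grG$. Since $\grG$ is locally compact Polish, Proposition~\ref{prop: M_equals_UM} gives $\MM(\grG)=\UM(\grG)$, so $M$ is uniformly meager. Now invoke Proposition~\ref{prop:nontriv}, which applies because $\grG$ is locally compact (its second disjunct, ``\TSI'', is not needed): for $\smz$ sets $S$ and uniformly meager sets $M$, one has $S\cdot M\neq\grG$. Applied to our $A$ and $M$, this gives $A\cdot M\neq\grG$.

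Combining the two directions completes the proof. All the substance is hidden in Proposition~\ref{prop:nontriv} (which required the compactness-based geometric Lemma~\ref{comp} to produce a Cauchy sequence avoiding $A\cdot M$) and in Proposition~\ref{prop: M_equals_UM} (whose nontrivial half shows that in the locally compact case every nowhere dense set is uniformly nowhere dense via a finite cover by balls); at this stage the theorem is literally a two-line corollary and there is no remaining obstacle.
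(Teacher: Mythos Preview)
Your proposal is correct and matches the paper's proof exactly: the paper's proof is a single sentence stating that the theorem follows directly from Propositions~\ref{prop:triv}, \ref{prop:nontriv}, and~\ref{prop: M_equals_UM}, and you have simply spelled out how these three pieces fit together.
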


\begin{proof}
The theorem follows directly from Propositions \ref{prop:triv},
\ref{prop:nontriv} and \ref{prop: M_equals_UM}.
\end{proof}

Next we shall discuss the possibility of extending the Galvin-Mycielski-Solovay theorem
to a larger class of Polish groups. First, one needs to realize that assuming the
Borel conjecture, the theorem holds trivially for every Polish group $\grG$,
as strong measure zero sets in all Polish groups are exactly the countable subsets
(\cite{MR1139474}), hence $S\cdot M$ is meager for every strong measure zero set $S$
 and  meager set $M$, hence $S\cdot M\neq\grG$.

On the other hand, it was shown in \cite{MR3453581}, that the Galvin-Mycielski-Solovay theorem
fails for the Baer-Specker group $\mathbb Z^\Nset$, assuming $\cov(\MM)=\co$.
We conjecture that assuming a strong failure of the Borel conjecture the locally
compact Polish groups are
exactly the ones for which the theorem holds.

\begin{conj}[CH]\label{CHSMZ}
 The Galvin-Mycielski-Solovay theorem holds in a Polish
group $\grG$ if and only if $\grG$ is locally compact.
\end{conj}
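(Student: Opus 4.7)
The ``if'' direction is Theorem~\ref{kysiak}. For the ``only if'' direction, assume CH and that $\grG$ is not locally compact; by the final remark in the proof of Proposition~\ref{prop: M_equals_UM}, $\grG$ is then nowhere locally compact. The plan is to construct an SMZ set $S\subseteq\grG$ and a meager set $M\subseteq\grG$ with $S\cdot M=\grG$, thereby refuting the Galvin--Mycielski--Solovay conclusion for $\grG$.

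\emph{Setup.} Fix a left-invariant metric $d$ on $\grG$ and use CH to enumerate $\grG=\{x_\alpha:\alpha<\omega_1\}$ and all sequences $\mc V^\alpha=\seq{V_n^\alpha}$ of open neighbourhoods of $1_\grG$ with $\diam V_n^\alpha<2^{-n}$. Using the tree construction from the proof of Proposition~\ref{prop: M_equals_UM}, fix in advance a countable family $\{N_k:k<\omega\}$ of closed nowhere dense sets whose union $M=\bigcup_k N_k$ is dense in $\grG$ and ``fat'' in the controlled sense made possible by nowhere local compactness---roughly, that any small open set $U$ contains translates of $M\cap U_s$ for arbitrarily deep nodes $s$ of the defining tree. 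Then proceed by transfinite recursion on $\alpha<\omega_1$, choosing at stage $\alpha$ an element $s_\alpha\in\grG$ and a translate sequence $\seq{g_n^\alpha}$ for $\mc V^\alpha$ subject to three invariants:
\begin{enum}
\item[(i)] $m_\alpha:=s_\alpha^{-1}x_\alpha$ lies in $M$;
\item[(ii)] for every $\beta<\alpha$, $s_\alpha$ lies in a tail of the cover $\bigcup_n g_n^\beta V_n^\beta$, in a slot of $\mc V^\beta$ reserved at stage $\beta$ for assignment later;
\item[(iii)] the translates $\seq{g_n^\alpha}$ catch every $s_\beta$ with $\beta\leq\alpha$, via standard bookkeeping in which even-indexed slots cover the countably many $s_\beta$'s existing at stage $\alpha$ while odd-indexed slots are held in reserve for $s_\gamma$ with $\gamma>\alpha$.
\end{enum}
Condition (iii) guarantees that each $\mc V^\alpha$ ultimately covers $S=\{s_\alpha:\alpha<\omega_1\}$, so $S$ is strong measure zero, and (i) gives $S\cdot M=\grG$.

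\emph{Main obstacle.} The crux is realising (i) and (ii) simultaneously at every stage: $s_\alpha$ must be selected from the intersection of the countable family of open sets from (ii) with the translate $x_\alpha\cdot M^{-1}$ of a meager set. In a locally compact group the covers $\seq{g_n^\beta}$ at stages $\beta<\alpha$ can be arranged so that this intersection is empty---that is essentially the content of Theorem~\ref{kysiak}---so the whole argument must exploit the failure of local compactness. I expect that one must build the translates $\seq{g_n^\beta}$ and the tree defining $M$ by a simultaneous fusion, in which the combinatorial branching freedom guaranteed by nowhere local compactness (as in the construction proving Proposition~\ref{prop: M_equals_UM}) produces, at each stage, enough points of $x_\alpha\cdot M^{-1}$ lying in all previously reserved tails. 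Making this delicate diagonalisation rigorous is presumably why Conjecture~\ref{CHSMZ} remains open; a solution likely requires a new quantitative combinatorial characterisation of nowhere local compactness that controls how densely meager sets can be distributed through the group.
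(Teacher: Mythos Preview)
This statement is a \emph{conjecture} in the paper and is not proved there in full generality; there is thus no complete proof to compare against. Your proposal is likewise not a proof, as you yourself concede: the ``main obstacle''---choosing $s_\alpha$ simultaneously in the countably many reserved open tails and in the meager set $x_\alpha\cdot M^{-1}$---is left unresolved, and your final paragraph is an admission that the argument does not go through as stated.

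The paper's approach does, however, supply exactly the ``quantitative combinatorial characterisation of nowhere local compactness'' you speculate about at the end. It isolates the notion of an \emph{anti-GMS set}: a nowhere dense $C\subs\grG$ such that for every sequence $\seq{U_n}$ of neighbourhoods of $1_\grG$ there exist $g_n\in\grG$ making $g\cdot\bigcup_n g_n U_n$ dense in $C$ for \emph{every} $g\in\grG$. Proposition~\ref{anti-GMS} then shows that under $\cov(\MM)=\co$ (hence under CH) the existence of an anti-GMS set lets one carry out precisely the transfinite recursion you sketch: the uniform density of the translated covers in $C$ guarantees that at each stage the intersection of the reserved tails with $x_\alpha\cdot C^{-1}$ is comeager in $C$, so your obstacle dissolves. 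Conjecture~\ref{CHSMZ} thereby reduces to the ``Strong Conjecture'' that every non-locally-compact Polish group contains an anti-GMS set. The paper verifies this for \TSI{} groups (using the invariant submeasure of Theorem~\ref{submeasure}) and for closed subgroups of $S_\infty$, but the general case remains open. The key idea missing from your outline is this anti-GMS abstraction, which cleanly separates the hard group-theoretic task (construct an anti-GMS set from nowhere local compactness) from the routine set-theoretic diagonalisation; your attempt to build $M$ and the translate sequences simultaneously, without first securing such a uniformly dense nowhere dense set, is why step~(ii) cannot be reconciled with step~(i).
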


The Continuum Hypothesis is optimal in the sense that under {\sf CH} the
Galvin-Mycielski-Solovay Theorem fails for as many Polish groups as possible
follows from the logical complexity of the problem. The statement {\it $\grG$
satisfies the Galvin-Mycielski-Solovay Theorem} is a $\Pi^2_1$-statement with
$\grG$ as a parameter, and hence is decided by the $\Omega$-logic under the
Continuum hypothesis. Moreover, if the statement is true assuming {\sf CH}
it is true in {\sf ZFC} (see e.g. \cite{MR2069032}).

We shall verify (following \cite{MR3707641}) that the conjecture is true for Abelian
Polish groups, in fact, it is true for all groups with a complete
(both-sided)-invariant metric, and also for closed subgroups
of the permutation group $S_\infty$. Whether it is true in general remains open.

There is a, perhaps an even more interesting, stronger {\sf ZFC} conjecture
on the structure of Polish groups. The following
concept was introduced in \cite{MR3453581} and the term coined in \cite{MR3707641}:
A nonempty subset $C$ of a Polish group $\grG$  is said to be \emph{anti-GMS}
if it is nowhere dense and for every sequence $\{U_n : n \in \Nset\}$ of open
neighborhoods of $1$ there is a sequence $\{g_n : n \in\Nset\}$ of elements
of $\grG$ such that for every $g\in \grG$, the set
$g\cdot \bigcup_{n\in\Nset}g_n\cdot U_n$  is dense in $C$.

The reason for introducing anti-GMS sets is the following:

\begin{prop}[\cite{MR3453581}]\label{anti-GMS}
Assuming $\cov(\MM)=\co$,
if $C\subs\grG$ is anti-GMS, then there is a strong measure zero set $S$
such that $S\cdot C=\grG$.
\end{prop}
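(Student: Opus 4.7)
The plan is a transfinite recursion of length $\co$ to produce $S=\{s_\alpha:\alpha<\co\}$ simultaneously witnessing $SC=\grG$ and strong measure zero.

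\textbf{Setup.} I may assume $C$ is closed in $\grG$, replacing $C$ by $\overline C$; it stays nowhere dense, and a short check shows that ``$gV\cap C$ dense in $C$'' implies ``$gV\cap\overline C$ dense in $\overline C$,'' preserving anti-GMS. Enumerate $\grG=\{h_\alpha:\alpha<\co\}$ and all sequences of basic open neighborhoods of $1_\grG$ as $\{\langle U^\alpha_n\rangle_{n\in\Nset}:\alpha<\co\}$, arranging that every sequence appears on a cofinal subset of $\co$. For each $\alpha$, the anti-GMS hypothesis produces a sequence $\{g^\alpha_n\}\subs\grG$; set $V_\alpha=\bigcup_n g^\alpha_n\cdot U^\alpha_n$. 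Rewriting ``$gV_\alpha\cap C$ dense in $C$ for every $g$'' as ``$V_\alpha$ dense in every left translate $hC$'' equips me with a family of open sets $V_\alpha$ that are uniformly dense across translates of $C$.

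\textbf{Recursion.} At stage $\alpha$, I would choose
\[
s_\alpha\in h_\alpha C^{-1}\cap\bigcap_{\beta<\alpha}V_\beta.
\]
Condition (a) $s_\alpha\in h_\alpha C^{-1}$ gives $h_\alpha\in s_\alpha C\subs SC$, so at the end $SC=\grG$; condition (b) $s_\alpha\in V_\beta$ for each $\beta<\alpha$ drives SMZ. Nonemptiness of the intersection is verified inside the closed (hence Polish) subspace $X:=h_\alpha C^{-1}\subs\grG$: each $V_\beta\cap X$ is open in $X$, and a dualized version of anti-GMS---applied to symmetric basic neighborhoods and carried through inversion in $\grG$---shows each is also \emph{dense} in $X$. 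Since there are $|\alpha|+1<\co=\cov(\MM)$ such open dense sets, and $\cov(\MM(X))=\cov(\MM)$ for any uncountable Polish $X$, the intersection is nonempty.

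\textbf{Verification and main obstacle.} Condition (a) yields $SC=\grG$. For SMZ, given any sequence $\langle W_n\rangle$ of basic neighborhoods of $1$, use the cofinal enumeration to choose $\beta$ with $\langle U^\beta_n\rangle=\langle W_n\rangle$; then by (b), $\{s_\alpha:\alpha>\beta\}\subs V_\beta=\bigcup_n g^\beta_n W_n$, and the remaining initial segment $\{s_\alpha:\alpha\le\beta\}$ is absorbed into the cover by replacing sufficiently many entries $g^\beta_n$ with the leftover points themselves (each $W_n$ contains $1$, so $s_\alpha\cdot W_n\ni s_\alpha$); bookkeeping in the enumeration is arranged so this absorption is manageable within a countable cover. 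The technical heart of the argument---and the step I expect to be the main obstacle---is the density claim in the recursion: the stated anti-GMS property is ostensibly one-sided, giving density in left translates of $C$, whereas the covering condition (a) forces the intersection to be taken inside $h_\alpha C^{-1}$, a translate of $C^{-1}$. Bridging these via symmetric basic neighborhoods and inversion, together with the bookkeeping needed to keep initial segments covered in the final SMZ verification, is where the delicate work lies.
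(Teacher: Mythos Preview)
Your recursion has the right architecture, but the step you yourself flag as ``the main obstacle'' is a genuine gap that the proposed fix does not close. Anti-GMS says $gV_\beta$ is dense in $C$ for every $g$, equivalently $V_\beta$ is dense in every left translate $g^{-1}C$. You need $V_\beta$ dense in $h_\alpha C^{-1}$, a left translate of $C^{-1}$. Inversion only takes ``$V_\beta$ dense in $gC$'' to ``$V_\beta^{-1}$ dense in $C^{-1}g^{-1}$'', a \emph{right} translate of $C^{-1}$; and even with symmetric $U_n^\beta$, the set $V_\beta^{-1}=\bigcup_n U_n^\beta(g_n^\beta)^{-1}$ is a union of \emph{right} translates of the $U_n^\beta$, which is useless for the left-sided Rothberger covering you need for SMZ. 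There is no general mechanism to pass from one-sided anti-GMS to density in left translates of $C^{-1}$.

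The paper sidesteps this entirely by running the Baire-category step \emph{inside $C$} rather than inside $h_\alpha C^{-1}$. At stage $\alpha$ one chooses
\[
m_\alpha\in C\cap g_\alpha^{-1}\cdot\bigcap_{\beta\le\alpha}V_\beta,
\]
which is nonempty because each $g_\alpha^{-1}V_\beta\cap C$ is open dense in $C$ \emph{directly from the anti-GMS definition} (take $g=g_\alpha^{-1}$), and fewer than $\co=\covM$ such sets are being intersected. Then $x_\alpha:=g_\alpha m_\alpha$ lies in $\bigcap_{\beta\le\alpha}V_\beta$ and $g_\alpha=x_\alpha m_\alpha^{-1}\in X\cdot C^{-1}$, so one obtains $X\cdot C^{-1}=\grG$ (with $C^{-1}$ still nowhere dense, which suffices), while the SMZ verification for $X=\{x_\alpha:\alpha<\co\}$ goes through exactly as in your outline.

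A smaller point: your absorption of the initial segment $\{s_\alpha:\alpha\le\beta\}$ by ``replacing sufficiently many entries $g_n^\beta$'' fails once $\beta\ge\omega$, since that segment may have size $|\beta|+1$ while there are only countably many indices $n$. The clean fix (which the paper uses) is to split the target sequence into $\{U_{2n}\}$ and $\{U_{2n+1}\}$: the evens determine some $V_\beta$ covering the tail, and the head, having size ${<}\covM$, is SMZ by Rothberger's theorem and is covered by the odds.
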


\begin{proof} Enumerate $\grG=\{g_\alpha:\alpha<\co\}$ and enumerate
all sequences of open sets in $\grG$ as $\{\seq{U_n^\alpha}:\alpha<\co\}$.
Let $C\subs\grG$ be anti-GMS, and for every $\alpha<\co$ let
$\seq{g_n^\alpha}$ be such that
for all $g\in\grG$, $(g \cdot \bigcup_{n\in\Nset}g_n^\alpha\cdot U_n^\alpha) \cap M$
is comeager in $M$. Let $U_\alpha=\bigcup_{n\in\Nset}g_n^\alpha\cdot U_n^\alpha$.

As $\covM=\co$, the intersection of fewer than $\co$  relatively
dense open subsets of $M$ is not empty. In particular,
for every $\alpha<\co$, there is an
$$
  m_\alpha\in M\cap (g_\alpha^{-1} \cdot \bigcap_{\beta\leq\alpha}U_\beta).
$$
There is then an $x_\alpha\in \bigcap_{\beta\leq\alpha}U_\beta$ such that
$m_\alpha= g_\alpha^{-1} \cdot x_\alpha$.
That is $g_\alpha= x_\alpha \cdot m_\alpha^{-1}$.

Let $X=\{x_\alpha:\alpha<\co\}$. Then 
$\grG = X \cdot M^{-1}$. Let us see that $X\in \smz(\grG)$:
Given a sequence $\{U_n: n\in\Nset\}$ of neighbourhoods of $1_\grG$,
consider the subsequence
$\{U_{2n}: n\in\Nset\}$. It is listed as $\seq{U_n^\alpha}$ for some
$\alpha<\co$.
By the construction, every $x_\gamma\in U_\alpha$ for $\gamma\geq\alpha$.
On the other hand, $X\setminus U_\alpha\subs\{x_\beta:\beta <\alpha\}$,
hence has size less than $\covM=\co$. Committing the sin of forward reference,
by Theorem~\ref{thm:Roth}(i),  $X\setminus U_\alpha$ is a $\smz$ set,
hence can be covered by the sets $\{U_{2n+1}:n\in\Nset\}$. Hence,
 $X$ has strong measure zero.
\end{proof}

The anti-GMS sets are our only tool for disproving the Galvin-Mycielski-Solovay
Theorem in non-locally compact groups. Hence the {\it Strong Conjecture} is:

\begin{conj} Exactly one of the following holds for any Polish group $\grG$:
Either $\grG$ is locally compact, or it contains an anti-GMS set.
\end{conj}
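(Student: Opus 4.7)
The conjecture is a biconditional: $\grG$ is locally compact iff $\grG$ contains no anti-GMS set. My plan is to treat the two directions separately, with the bulk of the work in the non-locally-compact direction.

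For ``locally compact implies no anti-GMS set'', the plan is to combine Proposition~\ref{anti-GMS} with the Galvin-Mycielski-Solovay theorem for locally compact groups (Theorem~\ref{kysiak}): if $\grG$ is locally compact and $C \subs \grG$ is anti-GMS, then in any model in which $\cov(\MM) = \co$ holds, Proposition~\ref{anti-GMS} produces an $\smz$ set $S$ with $S \cdot C = \grG$, while $C$ is nowhere dense (hence meager) and Theorem~\ref{kysiak} yields $S \cdot C \neq \grG$, a contradiction. To lift this to \textsf{ZFC}, one would pass to a suitable ccc forcing extension realising $\cov(\MM) = \co$ in which $C$ retains its anti-GMS property. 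A preferable direct \textsf{ZFC} argument would leverage Proposition~\ref{prop: M_equals_UM}: any nowhere dense $C$ in locally compact $\grG$ is uniformly nowhere dense, and one would try to use this uniformity against the ``for every $g$'' clause in anti-GMS to produce, for a well-chosen shrinking sequence of neighborhoods $\{U_n\}$, some $g \in \grG$ such that $g \cdot \bigcup_n g_n \cdot U_n$ misses a fixed nonempty relatively open piece of $C$.

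For ``non-locally-compact implies contains anti-GMS'', the plan is to emulate and strengthen the tree construction of Proposition~\ref{prop: M_equals_UM}. Lemma~\ref{lemma:submeasure} supplies a decreasing local basis $\{V_n\}$ at $1_\grG$ with the key non-covering property, and using a left-invariant metric (Birkhoff-Kakutani) I would recursively build a scheme $\{U_s : s \in \Nset^{<\Nset}\}$ of nonempty regular closed sets whose branches converge to points, designed so that the set $C = \bigcap_j \bigcup_{|s|=j} U_s$ is nowhere dense yet so ``self-dense'' that, for each sequence $\{W_n\}$ of neighborhoods of $1_\grG$, a diagonal selection of $g_n$'s from the node labels makes $g \cdot \bigcup_n g_n \cdot W_n$ dense in $C$ for every $g \in \grG$. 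The known special cases of abelian, \TSI{} and closed subgroups of $S_\infty$ (from~\cite{MR3707641}) serve as templates for the precise combinatorics.

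The principal obstacle is the universal quantifier over $g \in \grG$ in the anti-GMS definition. In an abelian or \TSI{} group the left and right uniformities coincide, so a construction performed against the left uniformity automatically behaves uniformly under arbitrary conjugation; in $S_\infty$ one has tight combinatorial control via pointwise stabilisers. In a general non-\TSI{}, non-\CLI{} Polish group these uniformities diverge, and there is no canonical device guaranteeing that an anti-GMS-like property built with respect to the left uniformity survives an arbitrary left-translation. A possible workaround is to perform the tree construction inside a carefully chosen conjugation-invariant set, or to first pass to a quotient where two-sided invariance is recovered; either strategy will require genuinely new input, which is presumably why the conjecture has remained open.
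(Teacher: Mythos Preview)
The statement you are addressing is a \emph{conjecture}, not a theorem: the paper explicitly labels it the ``Strong Conjecture'' and does not prove it. There is therefore no ``paper's own proof'' to compare against. What the paper does prove is the hard (non--locally-compact $\Rightarrow$ anti-GMS) direction in two special cases---\TSI{} Polish groups and closed subgroups of $S_\infty$---using, respectively, the invariant diffuse submeasure of Theorem~\ref{submeasure} and bespoke permutation combinatorics. The general case remains open, and the paper says so outright. Your final paragraph correctly diagnoses the obstacle (no two-sided control in a general Polish group), so your proposal is really a research outline, not a proof, and should be labelled as such.

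Two specific comments on the outline. For the direction ``locally compact $\Rightarrow$ no anti-GMS set'', your forcing argument has a genuine gap: you assume that the anti-GMS property of a given $C$ survives passage to a ccc extension realising $\cov(\MM)=\co$, but anti-GMS is a $\forall\exists\forall$ statement over reals and you give no absoluteness argument. The alternative direct route via uniform nowhere density (Proposition~\ref{prop: M_equals_UM}) is more promising, but you have not carried it out, and it is not obvious how the ``for every $g$'' clause is defeated by uniformity alone; this direction is not treated in the paper either. For the direction ``non--locally-compact $\Rightarrow$ anti-GMS'', your plan to imitate the tree construction of Proposition~\ref{prop: M_equals_UM} is reasonable in spirit, but note that the paper's actual constructions do \emph{not} proceed that way: the \TSI{} case builds $C$ via a submeasure-controlled recursion (the Claim inside that proof), and the $S_\infty$ case uses a combinatorial relation $R_{n,m}$ on pairs of permutations. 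Neither of these is a straightforward refinement of the $\MM\neq\UM$ tree, so a general argument will likely need a genuinely new idea rather than a common strengthening of the two known ones.
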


It is immediate from Proposition \ref{anti-GMS} that the Strong conjecture,
indeed, solves the GMS conjecture stated above. It is, in fact the Strong conjecture
we have verified for the aforementioned classes of groups:

\begin{thm}[\cite{MR3707641}] Let $\grG$ be a non-locally compact \TSI{} Polish group.
Then $\grG$ contains an anti-GMS set.
\end{thm}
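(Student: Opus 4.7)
The plan is to adapt the tree construction of Proposition~\ref{prop: M_equals_UM} so that the resulting nowhere dense set $C$ is ``translation-rich'' enough to be anti-GMS, using two-sided invariance of the metric as the main technical tool. First I fix a compatible bi-invariant metric $d$ on $\grG$; any invariant compatible metric on a Polish group is automatically complete, so $(\grG,d)$ is complete. Because of bi-invariance, non-local-compactness transfers from $1_\grG$ to every point, so every ball $B(x,r)$ fails to be totally bounded and contains, for some $\eps=\eps(r)>0$, an infinite $\eps$-separated subset. I also record a useful reformulation: using $d(gg_nu,c)=d(g_nu,g^{-1}c)$, the anti-GMS condition is equivalent to asking that $\bigcup_n g_n V_n$ be relatively dense in every left-translate $hC$ of $C$ simultaneously, so my task reduces to producing one sequence $(g_n)$ that witnesses density in all translates at once.

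Next I construct $C$ as a generalized Cantor scheme. Recursively build a tree $\{U_s:s\in\Nset^{<\Nset}\}$ of open balls $U_s=B(x_s,r_s)$ with $r_s<2^{-|s|}$, $U_{s\concat n}\subs U_s$ and children pairwise $3r_{|s|+1}$-separated, augmented with the extra requirement that for every $s$, the family of displacements $\{x_s^{-1}x_{s\concat n}:n\in\Nset\}$ is dense in $B(1_\grG,r_s)$. The extra density is compatible with the other conditions because non-local-compactness supplies enough room for infinitely many mutually disjoint small balls around any prescribed target center, while separability of $B(1_\grG,r_s)$ allows the targets to be chosen from a countable dense subset. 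Set $C:=\bigcap_j\bigcup_{|s|=j}\overline{U_s}$; by the standard tree properties $C$ is closed, nowhere dense, and homeomorphic to $\Pset$. Note that by bi-invariance, the tree of a left-translate $g^{-1}C$ is obtained simply by shifting the centers: $g^{-1}U_s=B(g^{-1}x_s,r_s)$.

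To verify that $C$ is anti-GMS, given $(V_n)$ (passing to a subsequence so that $V_n\subs B(1_\grG,2^{-n})$), fix a countable dense subset $\{h_k\}\subs\grG$ and an enumeration $n\mapsto(s_n,k_n)$ of $\Nset^{<\Nset}\times\Nset$ hitting every pair infinitely often. For each $n$, select $w_n\in V_n$ via the displacement-density property so that $x_{s_n}w_n\in C$, and set $g_n:=h_{k_n}x_{s_n}w_n^{-1}$. Then $g_n w_n=h_{k_n}x_{s_n}\in h_{k_n}C$ and $g_n w_n\in g_n V_n$; since the points $\{h_k x_s:s\in\Nset^{<\Nset}\}$ are dense in $h_k C$ by the displacement property and each pair $(s,k)$ is hit infinitely often, $\bigcup g_n V_n$ is (relatively) dense in every $h_k C$. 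For an arbitrary $h\in\grG$, approximate by $h_k$ with $d(h,h_k)$ small; bi-invariance yields $d(h x_s,h_k x_s)=d(h,h_k)$ for all $s$, so the parallel tree structures of $hC$ and $h_kC$ allow density in $h_k C$ to be transferred to density in $hC$.

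The main obstacle is precisely this last transfer step, since a point of $\bigcup g_n V_n\cap h_k C$ close to a target $hc$ is not automatically a point of $hC$, yet the sequence $(g_n)$ has already been fixed. This is handled by invoking the displacement density once more: near any target $hc$, one can choose the triple $(s,k,n)$ so that $h_k x_s$ is simultaneously close to $hc$ and close to a point of $hC$ of the form $h x_{s'}$, and then a suitable alternative $w_n'\in V_n$ from the same displacement-density construction produces an element of $g_n V_n\cap hC$. Bi-invariance of $d$ is the lever that makes this simultaneous control possible, and it is precisely here that the full strength of TSI (rather than merely CLI or one-sided invariance) is used, since only a two-sided invariant metric permits the required parallel transport between $h_k C$ and $hC$.
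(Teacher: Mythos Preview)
The construction of $C$ contains an incompatibility. You ask that for every $s$ the displacements $\{x_s^{-1}x_{s\concat n}:n\in\Nset\}$ be dense in $B(1_\grG,r_s)$; by invariance this makes the children's centers $\{x_{s\concat n}\}$ dense in $U_s=B(x_s,r_s)$. But you also ask that the children be $3r_{|s|+1}$-separated, and a positively separated set cannot be dense. Even if one drops the separation (letting child-radii tend to $0$ so the balls can still be disjoint), the density requirement forces $C$ to be somewhere dense: given any open $W\subs U_\emptyset$, density of children's centers at every level lets one build a branch $t_0\subsetneq t_1\subsetneq\cdots$ with $x_{t_j}\in W$ converging to a point $c\in C\cap W$. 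Thus $C$ is dense in $U_\emptyset$, hence---being closed---contains $U_\emptyset$, and is not nowhere dense. Since anti-GMS requires nowhere density, the proposal fails already here.

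The transfer step you flag as the main obstacle is a second genuine gap: a point of $\bigcup_n g_nV_n$ lying in $h_kC$ need not lie in $hC$, and your remedy of ``invoking displacement density once more'' to produce an alternative $w_n'\in V_n$ with $g_nw_n'\in hC$ amounts to asking that the already-fixed translate $g_nV_n$ meet $hC$, which is exactly what has to be proved. (There is also a slip in the formula for $g_n$: with $g_n=h_{k_n}x_{s_n}w_n^{-1}$ one gets $g_nw_n=h_{k_n}x_{s_n}$, which lies in $h_{k_n}C$ only if $x_{s_n}\in C$; you presumably intended $g_n=h_{k_n}x_{s_n}$.)

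For comparison, the paper's argument is organized around a different idea: it uses the left-invariant diffuse submeasure $\mu$ of Theorem~\ref{submeasure} to build a closed nowhere dense $C$ that is $\mu$-thick, in the sense that every basic open set meeting $C$ contains infinitely many translates $g_i\cdot W$ of a fixed small set with $\mu(g_i\cdot W\setminus C)\to 0$. The witnesses $g_j$ are then chosen so that for every $g$ some $g\cdot g_j\cdot U_j$ lands inside one of these translates; since $\mu$ is left-invariant and $\mu(U_j)$ exceeds the $\mu$-mass of the corresponding hole, $g\cdot g_j\cdot U_j$ must meet $C$. Submeasure-thickness is compatible with nowhere density in a way that your topological displacement-density is not, and this is precisely what your approach is missing.
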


\begin{proof} Let $\mu$ be a non-trivial left-invariant countably subadditive,
diffuse outer regular submeasure
given by Theorem \ref{submeasure}.

\begin{claim} There is a nowhere dense set $C\subs \grG$ such that for every
open set $O$ intersecting $C$ there is an open set $U$ and
$\{g_m:m\in\Nset\}\subs \grG$ such that
for every $m\in \Nset$
$g_m\cdot U\subs O$ and $\lim_{m\in\Nset}\mu (g_m\cdot U\setminus C)=0$.
\end{claim}

\begin{proof} To construct the set $C$ let $\{B_n:n\in\Nset\}$ be a basis for
the topology of $\grG$. Recursively construct an increasing sequence of open sets
$\{W_k: k\in\Nset\}$ and a sequence $\{A_k: k\in\Nset\}$ of countable sets of
pairs of the form $\langle U,\eps\rangle$, where $U$ is an open subset of
$\grG$ and $\eps>0$ such that
\begin{enumerate}
\item $B_k\cap W_{k+1}\neq \emptyset$,
\item $\forall g\in\grG$ $|\{  \langle U,\eps\rangle\in A_k: g\in U   \}|\leq k$,
\item $\forall   \langle U,\eps\rangle\in A_k$
  $\mu(\overline{W_{k+1}} \cap U\setminus \overline W_k)<\frac{\eps}{2^k}$,
  and if  $k$ is the least such that $ \langle U,\eps\rangle\in A_k$
  then $\overline W_k\cap U=\emptyset$, and
\item
\begin{enumerate}
\item either $B_k\subs W_{k+1}$,
\item or, there is  an open neighborhood $V$ of $1_\grG$ and  there are distinct
$\{g_i:i\in\Nset\}\subs \grG$
and  $\{\eps_i:i\in\Nset\}$ such that for all $i\in\Nset $
$ g_i\cdot V\subs B_n$, $\langle  g_i\cdot V,\eps_i\rangle\in A_k$ and
$\lim _{i\in\Nset}\eps_i=0$.
\end{enumerate}
\end{enumerate}

To carry out the construction, put first $W_0=A_0=\emptyset$.
Having constructed $W_k$ and $A_k$, see first whether $B_k\subs \overline W_k$.
If so, let $A_{k+1}=A_k$ and let $W_{k+1}=W_k\cup B_k$.
If $B_k\not \subs \overline W_k$, let $U_0, U_1$ be disjoint open subsets of
$B_k\setminus \overline W_k$. By (2) there is an open set $U_2\subs U_0$
contained or disjoint from every $U$ appearing in $A_k$, by non-atomicity of
$\mu$ we may require $\mu(U_2)$ to be so small that $W_{k+1}=W_k\cup U_2$
satisfies (3) for all  $ \langle U,\eps\rangle\in A_k$ such that
$U_2\subs U$. Finally, as $U_1$ is not compact, there are infinitely many balls
of the same diameter (i.e. translates of the same open set) with disjoint closures
contained in $U_1$, add them to $A_{k+1}$ paired with some real numbers converging
to $0$. It is clear that (1)-(4) are satisfied.

Now, let $C=\grG\setminus \bigcup_{k\in\Nset} W_k$.
 Then $C$ is a closed nowhere dense set by (1). By (3),
$\mu(U\setminus C)<\eps$ for every
$\langle U,\eps\rangle\in \bigcup_{k\in\Nset} A_k$, and if
$B_k\cap C\neq\emptyset$ then by (4) $B_k$ contains infinitely many sets
with the required properties.
\end{proof}

{\it The set $C$ from the claim is anti-GMS}. In order to verify this let a sequence
$\{U_m:m\in\Nset\}$ of open neighborhoods of $1_\grG$ be given,
without loss of generality of diameter shrinking to $0$.
Let again $\{B_n:n\in\Nset\}$ be a basis for the topology of $\grG$, and let
$\{A_n:n\in\Nset\}$ be a partition of $\Nset$ into infinite sets. For every
$B_n$ such that $B_n\cap C\neq\emptyset$ let $W_n$ be an open neighborhood of
$1_\grG$
such that there are distinct $\{g_i:i\in\Nset\}\subs \grG$ such that
$g_i\cdot W_n\subs B_n$ and
$\lim _{i\in\Nset}\mu(g_i\cdot W_n\setminus C)=0$.

Now, let $V_n$ be an open neighborhood of $1_\grG$
such that  $V_n\cdot U_j\subs W_n$ for all but finitely many $j\in A_n$,
and let $g_j, h_j\in \grG$ be such that
$h_j\cdot W_n\subs B_n$, $\mu (h_j\cdot W_n\setminus C)<\mu(U_j)$, and
$G=\bigcup_{j\in A_n} h_j\cdot V_n\cdot g_j^{-1}$ (here is where we use the
invariance of the metric). The sequence $\{g_i:i\in\Nset\}\subs \grG$
witnesses (for the sequence $\{U_i:i\in\Nset\}$ that $C$ is anti-GMS.
Indeed, of $g\in\grG$
and $B_n\cap  C\neq \emptyset$ then there is a $j\in A_n$ such that
$g\in h_j\cdot V_n\cdot g_j^{-1}$, i.e.
$g\cdot g_j\in h_j\cdot V_n$, hence
$$
  g\cdot g_j\cdot U_j\subs h_j\cdot V_n\cdot U_j\subs h_j\cdot W_n\subs B_n.
$$
As $\mu(h_j\cdot W_n\setminus C)<\mu(U_j)$, we get
$C\cap B_n\cap g\cdot g_j\cdot U_j\neq \emptyset$, as required.
\end{proof}

\begin{coro} The strong conjecture is true for Abelian groups.
\end{coro}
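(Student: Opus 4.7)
The plan is to observe that the corollary is an essentially immediate consequence of the preceding theorem, and all that really needs checking is that the class of Abelian Polish groups is contained in the class of TSI Polish groups.

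First I would verify that every Abelian Polish group $\grG$ is TSI. By the Birkhoff--Kakutani theorem, $\grG$ admits a compatible left-invariant metric $d$, so that $d(zx,zy) = d(x,y)$ for all $x,y,z \in \grG$. When $\grG$ is Abelian, the identity $zx = xz$ gives $d(xz,yz) = d(zx,zy) = d(x,y)$, so $d$ is right-invariant as well, hence bi-invariant. As noted in the introduction, any invariant compatible metric on a Polish group is automatically complete, so $d$ witnesses that $\grG$ is TSI.

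With this in hand, the conjecture for an Abelian Polish group $\grG$ splits into two trivial cases. If $\grG$ is locally compact, the first disjunct of the strong conjecture holds directly. Otherwise $\grG$ is a non-locally compact TSI Polish group, and the preceding theorem produces an anti-GMS subset $C \subs \grG$, giving the second disjunct.

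There is no real obstacle here since the substantive construction of the anti-GMS set (using the left-invariant submeasure from Theorem~\ref{submeasure}) was carried out in the theorem itself. The only mild subtlety is the exclusivity asserted by the word ``exactly'' in the conjecture, i.e.\ that a locally compact Polish group cannot contain an anti-GMS set; this is already implicit in the framework, since by Proposition~\ref{anti-GMS} an anti-GMS set would (under $\covM = \co$) yield a strong measure zero $S$ with $S \cdot C = \grG$ for a nowhere dense, hence meager, $C$, contradicting the Galvin--Mycielski--Solovay theorem (Theorem~\ref{kysiak}) in the locally compact setting.
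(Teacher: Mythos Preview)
Your proposal is correct and matches the paper's (implicit) approach: the corollary is stated without proof because abelian Polish groups are \TSI, so the preceding theorem applies directly to the non-locally-compact case.

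One minor remark on your last paragraph: the strong conjecture is phrased as a ZFC statement, but your argument that a locally compact group cannot contain an anti-GMS set passes through Proposition~\ref{anti-GMS}, which assumes $\covM=\co$; so what you have written is a consistency argument rather than a ZFC proof of exclusivity. The paper does not spell out this direction either, and in context the substantive content being claimed for abelian groups is the non-locally-compact case.
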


We do not know, whether the strong conjecture is true for all Polish groups,
but we can confirm it for another important class of groups -- the automorphism
groups of countable structures:

\begin{thm}[\cite{MR3707641}]
Let $\grG$ be a non-locally compact closed subgroup of $S_\infty$.
Then $\grG$ contains an anti-GMS set.
\end{thm}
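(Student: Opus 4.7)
The plan is to mimic the construction of $C$ from the preceding theorem (the TSI case), replacing the bi-invariance of the metric by a direct appeal to the combinatorial action of $\grG$ on $\Nset$. Since $\grG\leq S_\infty$ is closed and non-locally compact, for every finite $F\subs\Nset$ the pointwise stabilizer $G_F=\{g\in\grG:g\rest F=\id\rest F\}$ has at least one infinite orbit on $\Nset\setminus F$. Consequently, every non-empty open $U\subs\grG$ contains infinitely many pairwise disjoint left-translates of some small stabilizer neighborhood of $1_\grG$, obtained by letting elements of a suitable $G_F$ act on cosets of a deeper stabilizer $G_{F'}\subs G_F$. Fix also a left-invariant, countably subadditive, diffuse, outer regular submeasure $\mu$ on $\grG$ provided by Theorem~\ref{submeasure}.

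First, I would construct $C$ by the recursive template of the Claim in the TSI proof: enumerate a basis $\{B_k\}$ for $\grG$ and inductively build an increasing sequence of regular open ``waste'' sets $W_k$ together with countable families $A_k$ of pairs $\langle U,\eps\rangle$, arranging that $\bigcup_k W_k$ is dense (so that $C:=\grG\setminus\bigcup_k W_k$ is closed and nowhere dense) and that every $B_k$ meeting $C$ contains infinitely many pairwise disjoint translates $h_m\cdot V$ of a common stabilizer neighborhood $V$ of $1_\grG$, recorded in $A_{k+1}$ with weights converging to $0$, satisfying $\mu(h_m V\setminus C)\to 0$. At each stage, non-local compactness supplies enough room to fit those infinitely many disjoint translates into $B_k\setminus\overline{W_k}$ via the orbit action of $G_F$, and diffusivity of $\mu$ lets us impose the required smallness.

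To verify that $C$ is anti-GMS, given neighborhoods $\{U_j\}_{j\in\Nset}$ of $1_\grG$ I would partition $\Nset$ into infinite sets $\{A_n\}$ indexed by basis elements. For each $n$ with $B_n\cap C\neq\emptyset$, pick $V_n$ and $\{h_j:j\in A_n\}$ with $h_j V_n\subs B_n$ and $\mu(h_j V_n\setminus C)<\mu(U_j)$ from the data in the construction; shrink $V_n$ so that $V_n U_j\subs V_n$ for all sufficiently large $j\in A_n$ (possible because $U_j\to\{1_\grG\}$ and the basic neighborhoods are stabilizers). Choose witnesses $\{g_j:j\in A_n\}$ so that $\bigcup_{j\in A_n}h_j V_n g_j^{-1}=\grG$. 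Then for any $g\in\grG$ and $B_n$ meeting $C$ there is $j\in A_n$ with $g\in h_j V_n g_j^{-1}$, whence $g g_j U_j\subs h_j V_n$; left-invariance of $\mu$ gives $\mu(g g_j U_j)=\mu(U_j)>\mu(h_j V_n\setminus C)$, forcing $g g_j U_j\cap C\cap B_n\neq\emptyset$, which is exactly the anti-GMS requirement.

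The main obstacle is the step ``choose $g_j$'s so that $\bigcup_{j\in A_n}h_j V_n g_j^{-1}=\grG$''. In the TSI case bi-invariance yielded $h_j V_n g_j^{-1}=h_j g_j^{-1}V_n$, so mere density of $\{h_j g_j^{-1}\}$ in $\grG$ sufficed. In $\grG\leq S_\infty$, however, $h_j V_n g_j^{-1}=h_j g_j^{-1} G_{g_j(F)}$ when $V_n=G_F$, a coset whose stabilizing finite set depends on $g_j$. The covering condition thus becomes a combinatorial statement about agreements on partial injections, namely that for every $f\in\grG$ some $j$ satisfies $f(g_j(x))=h_j(x)$ for all $x\in F$. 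Resolving it requires choosing the pairs $(h_j,g_j)$ so as to exhaust all such agreement patterns realized by elements of $\grG$. That this is achievable rests crucially on $\grG$ being a \emph{closed} subgroup of $S_\infty$ (which lets us translate group operations into combinatorics of partial injections compatible with $\grG$) and on the infinite-orbit hypothesis (which provides enough flexibility in the joint choice of $g_j$ and $h_j$ to meet any pattern on any finite $F$).
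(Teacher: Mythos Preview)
Your proposal correctly identifies the crux of the difficulty but does not actually resolve it, and the paper takes a route that avoids the submeasure entirely.

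\medskip

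\textbf{The gap.} You recognize that the step ``choose $g_j$ so that $\bigcup_{j\in A_n}h_j V_n g_j^{-1}=\grG$'' is where bi-invariance was used in the \TSI{} argument, and that in $S_\infty$ the set $h_j G_F g_j^{-1}$ is a coset of $G_{g_j(F)}$ rather than of $G_F$. But your resolution is only a restatement of the goal: saying one must ``exhaust all agreement patterns'' and that closedness and the infinite-orbit hypothesis ``provide enough flexibility'' is not an argument. Concretely, the $h_j$'s are not free --- they are the translates produced by the construction of $C$, all sitting inside a single basic open set $B_n$ and hence agreeing on a fixed initial segment. Covering every $f\in\grG$ by sets of the form $\{f:f\circ g_j\rest F=h_j\rest F\}$ then forces simultaneous constraints on the domain $g_j(F)$ and on the values $h_j\rest F$, and it is not at all clear (and you do not show) that the supply of $h_j$'s furnished by the submeasure claim is rich enough to meet them. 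The line ``shrink $V_n$ so that $V_n U_j\subs V_n$'' is also garbled (presumably you mean $V_nU_j\subs W_n$), but that is minor.

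\medskip

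\textbf{What the paper does instead.} The paper abandons the submeasure altogether. Using non-local-compactness it fixes an infinite set $A\subs\Nset$ of levels at which the stabilizer has an infinite orbit, and for $n<m$ in $A$ defines a symmetric left-invariant relation $R_{n,m}$ on pairs $(g,h)\in\grG^2$ (roughly: $g(n)$ avoids $\rng(h\rest m)$ and vice versa). A short pigeonhole argument shows that above every admissible partial injection $u$ one can find two extensions $g,h$ with $(g,h)\in R_{|u|,m}$. The set $C$ is then $[T(D)]$ for a carefully built countable $D\subs B$ of partial injections whose \emph{ranges are linearly ordered by inclusion}. The anti-GMS verification does not use any measure comparison: given $g$ and a basic $\cyl v$ meeting $C$, one produces two candidate indices $n_0<n_1$ with $(g_{n_0},g_{n_1})\in R_{|v|,n_0}$, and the linear order on ranges in $D$ forces at least one of $g\cdot g_{n_i}\rest n_i$ to stay in $T(D)$. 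This two-out-of-two combinatorial trick is what replaces the invariance-of-the-metric step; there is no analogue of it in your outline.
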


\begin{proof} As $\grG$ is not locally compact, there is an infinite $A\subs \Nset$
such that for all $n\in A$ there are infinitely many $m\in \Nset$  for which
there are $g\in G$ such that $g\rest n=1_\grG\rest n$ and $g(n)=m$.
Given $n,m\in A$, $n<m$ define
$$
  R_{n,m}=\{(g,h)\in\grG\times\grG: g(n)\not\in \rng(h\rest m)
  \text{ and }h(n)\not\in \rng(g\rest m)\}.
$$
Note that the relation is left-invariant. Let
$B=\{u\in \Nset^{<\Nset}: U$ is one-to-one and $dom(u)\in  A$
and $u\subs g$ for some $g\in \grG\}.$

\begin{claim}
For every  $n<m\in A$ and $u\in B$ such that $|u|=n$ there are
$g,h\in\grG$ both extending $u$ such that $(g,h)\in R_{n,m}$.
\end{claim}

\begin{proof} By the left-invariance of $R_{n,m}$, we may assume that $u$
is the identity on $n=dom(u)$. Let
$\{g_k: k\in\Nset\}\subs \grG$ be such that $g_k\rest n=u$ and
$\{g_k(n)): k\in\Nset\}$ are pairwise distinct, by further shrinking we may assume
that for every $l\in [n,m)$ either $\{g_k(n)): k\in\Nset\}$ are pairwise distinct
or all equal. In particular, for every $k_0\in\Nset$ the set
$\{k_1\in\Nset : g_{k_0}(n)\in \rng (g_{k_1}\rest [n,m)\}$ is finite,
hence $(g_0, g_k)\in R_{n,m}$ for almost all $k\in \Nset$.
\end{proof}

Given a subset $H$ of $B$ let
$T(H)=\{u\in B:\forall k\in \Nset\ u\rest k\not \in H\}.$
Construct $D\subs B$  such that
\begin{enumerate}
\item $\forall u\in B\ \exists v\in D\ u\subs v$,
\item $\forall v,v'\in D\ \rng (v)\subs \rng(v')$ or $\rng (v')\subs \rng(v)$, and
\item $\forall u\in T(D) \ \forall m\in \Nset \ \exists v\in T(D)\ m\in \rng(v)$.
\end{enumerate}
It is easy to construct such a set using a simple bookkeeping argument.
Having done so, let $C=[T(D)].$

To see that $C$ is anti-GMS, consider an infinite set $Z\subs A$.
By the Claim there is a sequence
$\{g_n:n\in Z\}\subs \grG$ such that
$$
  \forall u\in B\ \exists n_o<n_1 \in Z\ u\subs g_{n_0}\cap g_{n_1}
  \text{ and } (g_{n_o},g_{n_1})\in R_{|u|,n_0}.
$$
To finish the proof it suffices to show that
$g\cdot \bigcup_{n\in Z} \langle g_n\rest n\rangle \cap C$
is dense in $C$ for every $g\in \grG$.
To that end fix $g\in \grG$ and $v\in B$ such that
$\langle v\rangle \cap C\neq\emptyset$, and let $k=dom(v)$ and $u=g^{-1}\cdot v$.
Choose $n_0<n_1\in Z$ such that
$u\subs g_{n_0}\cap g_{n_1} \text{ and } (g_{n_o},g_{n_1})\in R_{k,n_1}$.
Then either $\langle g\cdot g_{n_0}\rest n_0\rangle\cap C\neq \emptyset$ or
$\langle g\cdot G_{n_1}\rest n_1\rangle\cap C\neq \emptyset$:
If not, then there are $s_0,s_1\in D$  such that
$s_0\subs g\cdot g_{n_0}\rest n_0$ and
$s_1\subs g\cdot g_{n_1}\rest n_1$.
Neither $s_0\subs v$ nor $s_1\subs v$ as
$C\cap \langle v\rangle\neq\emptyset$, so
$g\cdot g_{n_0}(k)\in \rng(s_1)$ and $g\cdot g_{n_1}(k)\in \rng(s_0)$.
This, however, contradicts the assumption that $\rng(s_0)\subs \rng(s_1)$ or
$\rng(s_1)\subs \rng(s_0)$.
\end{proof}

\section{Cardinal invariants of \smz{} in Polish groups}
\label{sec:cardinal}

Given an ideal   $\mc I$ of subsets of a set $X$ the following are the standard
cardinal invariants associated with $\mc I$:
\begin{align*}
  \non(\mc I)&=\min\{\abs{Y}:Y\subs X\wedge Y\notin\mc I\},
  \\
  \add(\mc I)&=\min\{\abs{\mc A}:\mc A\subs\mc I
  \wedge{\textstyle\bigcup}\mc A\notin\mc I\},
  \\
  \cov(\mc I)&=\min\{\abs{\mc A}:\mc A\subs\mc I
  \wedge{\textstyle\bigcup}\mc A =X\},
  \\
  \cof(\mc I)&=\min\{\abs{\mc A}:\mc A\subs\mc I
  \wedge(\forall I\in\mc I)(\exists A\in\mc A)(I\subs A)\}.
\end{align*}
We denote by $\MM,\NN$ the ideals of meager and Lebesgue null subsets of $\Cset$,
respectively. For $f,g\in\Pset$, we say that
$f\leq^* g$ if $f(n)\leq g(n)$ for all but finitely many $n\in\Nset$
(the order of \emph{eventual dominance}).
A family $F\subs\Pset$ is \emph{bounded} if there is an $h\in\Pset$ such that
$f\leq^* h$ for all $f\in F$; and $F$ is \emph{dominating} if for any
$g\in\Pset$ there is $f\in F$ such that $g\leq^*f$.
The cardinal invariants related to eventual dominance are $\mathfrak b$ 
(the minimal cardinality of an unbounded family) and $\mathfrak d$
(the minimal cardinality of a dominating family).

We shall briefly review the results (not necessarily in the chronological order)
concerning other cardinal invariants of $\smz(\grG)$, after which we give a more
detailed account of the $\non(\smz)$ in Polish groups. We shall denote by $\smz$
the ideal of strong measure zero subsets of $\Rset$.
Concerning additivity of $\smz$, Carlson  \cite{MR1139474} in effect showed that
$\add(\NN)\leq \add(\smz)$, that $\add(\smz)\leq\non(\NN)$ is a triviality,
while Goldstern, Judah and Shelah \cite{MR1253925} showed that consistently
$\cof(\MM)<\add(\smz)$, and, of course, Laver \cite{MR0422027} that consistently
$\add(\smz)< \mathfrak b$ and Baumgartner \cite{MR823775} that
consistently $\add(\smz)< \non(\NN)$.

For cofinality of $\smz$, there are lower bounds $\cov(\NN)$ and $\cov(\MM)$ (see below),
and it is folklore fact that assuming {\sf CH} $\cof(\smz)>\co$,
while the Borel conjecture produces models where $\cof(\smz)=\co$.
Yorioka \cite{MR1955243} and more recently Cardona \cite{cardona} produced models
of {\sf ZFC} where $\cof(\smz)<\co$. According to our knowledge,
it has not been subject to study if $\add(\smz(\grG))$, and/or $\cof(\smz(\grG))$
may vary depending on the Polish group in question.

It is, however, known that the uniformity numbers may differ depending on the group.
There is also a surprising asymmetry between $\cov(\smz)$ and $\non(\smz)$.
The trivial lower bound for $\cov(\smz)$ is $\cov (\NN)$ (every $\smz$-set has
Lebesgue measure zero) and it also seems to be the best one.  Pawlikowski \cite{MR1056381}
showed that $\cov(\smz)< \add(\MM)$ is consistent. On the other hand, Cardona, Mej\'{\i}a and
Riera-Marid \cite{mejiaetal}   recently showed that
$\cov(\smz)=\Nset_2=\co$ in the iterated Sacks model, hence,
there seems to be very little in terms of upper bounds on $\cov(\smz)$,
in particular, consistently $\cof(\NN)<\cov(\smz)$.

Before moving on we shall mention some of the open problems concerning these invariants:

\begin{question}
\begin{enumerate}
\item (\cite{mejiaetal}) Is it consistent that all four of the cardinal invariants
corresponding to $\smz$ have simultaneously different values?
\item  (\cite{mejiaetal}) Is it consistent that $\add(\smz)<\min\{\cov(\smz),\non(\smz)\}$?
\item  Do any of $\add(\smz(\grG))$, $\cov(\smz(\grG))$, $\cof\smz(\grG))$
depend on which Polish group one considers?
\end{enumerate}
\end{question}

Finally, we arrive at the uniformity of $\smz(\grG)$ which we shall discuss in
considerably more detail. Two more invariants are required here
(see \cite{MR2224048,MR613787,MR1350295,MR3453581}):

\begin{align*}
  \eq&=\min\{\abs{F}:F\subs\Pset\text{ bounded, }
  \forall g\in\Pset\ \exists f\in F\ \forall n\in\Nset\ f(n)\neq g(n)\}.\\
  \eqq&=\min\{\abs{F}:F\subs\Pset\text{ bounded, }\\
  &\qquad\qquad\forall g,h\in\Pset\ \exists f\in F\ \emany n\ \forall k\in[h(n),h(n+1))\ f(n)\neq g(n)\}.
\end{align*}
It is a theorem of Bartoszy\'nski  \cite[2.4.1]{MR1350295} that omitting
``bounded'' from the definition of $\eq$ yields $\covM$, i.e.
 $$\cov(\MM)=\min\{\abs{F}:F\subs\Pset \
  \forall g\in\Pset\ \exists f\in F\ \forall n\in\Nset\ f(n)\neq g(n)\}.$$
The  following diagram
(see \cite{MR1350295, MR2648159} for proofs) summarizes the provable inequalities between
the cardinal invariants mentioned (the arrows point from the smaller to the larger cardinal).

$$
\xymatrix{
\bbb \ar[r]  & \dd  \\
\add(\MM) \ar[r] \ar[u] \ar[rd] & \covM \ar[r] \ar[u] & \eq \ar[r] & \non(\NN)\\
& \eqq \ar[ru]
}
$$
In addition, $\add(\MM) =\min\{\mathfrak b, \eq\} = \min\{\mathfrak b, \eqq\}$,
while $\cov(\MM)< \min\{\mathfrak d, \eq\}$ is
consistent with ZFC by a theorem of Goldstern, Judah and Shelah \cite{MR1253925}.
For any separable metric space $X$ there are upper and lower bounds for $\nonS X$
given by Rothberger~\cite{MR0004281} and
Szpilrajn~\cite{Szpilrajn34}, respectively. The uniformity invariant
$\nonS X$ for $X=\Cset$ and $X=\Pset$
was calculated by Bartoszy\'nski \cite{MR1350295},
and Fremlin and Miller~\cite{MR954892}, respectively.

\begin{thm}\label{thm:Roth}
Let $X$ be a separable metric space that is not \smz{}.
\begin{enum}
\item{}
{\rm(\cite{MR0004281})} $\covM\leq\nonS X$,
\item{}
{\rm(\cite{Szpilrajn34})} if $X$ is not of universal measure zero%
\footnote{Recall that a metric space $X$ is of
\emph{universal measure zero} if there is no probability Borel measure on $X$
vanishing on singletons.}, then $\nonS X\leq\nonN$,
\item{}
{\rm(\cite{MR1350295})} $\nonS \Pset=\covM$ and
\item{}
 {\rm(\cite{MR954892})} $\nonS \Cset=\eq$.
\end{enum}
\end{thm}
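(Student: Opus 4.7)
For (i), the plan is a Baire-category argument in the Polish space $D^\Nset$, where $D$ is any countable dense subset of $X$: given $\seqeps$ and $y\in Y$, the set $A_y=\{\sseq{c_n}\in D^\Nset:\exists n\ d(y,c_n)<\eps_n/2\}$ is open (a countable union of basic clopens) and, by density of $D$, dense, hence comeager. Intersecting the fewer than $\covM$ many comeager sets $\{A_y\}_{y\in Y}$ produces a sequence $\sseq{c_n}$ yielding an $\seqeps$-cover of $Y$ by balls of diameter at most $\eps_n$.

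For (ii), I would use a diffuse Borel probability measure $\mu$ on $X$, available since $X$ is not of universal measure zero. The standard fact that $\min\{|A|:\mu^*(A)>0\}=\nonN$ for any such $\mu$ (obtained by Borel-isomorphing $\mu$, mod null sets, with Lebesgue measure on $\Cset$) supplies $A\subs X$ with $|A|\le\nonN$ and $\mu^*(A)>0$; in particular $A$ is not of universal measure zero, so Proposition~\ref{prop:UMZ} forces $A\notin\smz(X)$.

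For the upper bounds in (iii) and (iv) the common plan is to take a combinatorial evasion family of the right cardinality and reinterpret it geometrically. For (iii), Bartoszy\'nski's characterization of $\covM$ quoted in the excerpt produces $F\subs\Pset$ of size $\covM$ with $\forall g\in\Pset\ \exists f\in F\ \forall n\ f(n)\neq g(n)$; taking $\eps_n=2^{-(n+1)}$, any potential cover by cylinders $[s_n]$ with $|s_n|\ge n+1$ is defeated by the $f\in F$ evading $g(n):=s_n(n)$. For (iv), take an analogous bounded evasion family $F'\subs\prod_nH(n)$ of size $\eq$ directly from the definition of $\eq$, partition $\Nset$ into consecutive intervals $\seq{I_n}$ with $2^{|I_n|}\ge H(n)$, embed $F'$ coordinatewise into $\Cset\cong\prod_n 2^{I_n}$ via fixed injections $H(n)\hookrightarrow 2^{I_n}$, and rerun the same argument block by block using $g(n):=s_n\rest I_n$.

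The lower bound in (iv) is the one step not covered by (i) and runs in reverse: given $Y\subs\Cset$ with $|Y|<\eq$ and $\eps_n=2^{-h(n)}$ for increasing $h$, the family $\{g_y:y\in Y\}$ with $g_y(n):=y\rest h(n)\in 2^{h(n)}$ is bounded in $\Pset$ and of cardinality below $\eq$, so the definition of $\eq$ produces $\sseq{s_n}\in\prod_n 2^{h(n)}$ which meets every $g_y$ at some coordinate; the cylinders $[s_n]$ then cover $Y$. The main place where care is required is aligning the length scale of the cover in $\Cset$ with the block structure of the evasion family, together with checking that the bounded variant of the $\eq$-characterization admits the prescribed bound sequence $(2^{|I_n|})$; this is routine but the only step with any bookkeeping content.
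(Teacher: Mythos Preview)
Your proposal is correct and follows essentially the same route as the paper: the same Baire-category argument for (i) (the paper parametrizes by one-to-one functions in $\Pset$ rather than $D^\Nset$, but the idea is identical), the same reduction via Proposition~\ref{prop:UMZ} and the Borel-isomorphism theorem for (ii), and the same block-coding between bounded evasion families and subsets of $\Cset$ for (iv). The only cosmetic difference is that for the inequality $\nonS\Cset\leq\eq$ you embed an evasion family into $\Cset$ and show its image is not \smz, whereas the paper proves the contrapositive (a set of size $<\nonS\Cset$ encodes to a non-evasion family); the bookkeeping is the same either way.
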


 \begin{proof}
 (i) by now is standard: Given a separable metric space $X$ and a sequence
 $\{\eps_n: n\in\Nset\}$, pick a
 countable dense set $\{d_n: n\in\Nset\}\subs X$. For every one to one function
 $f\in \Pset$ let
 $U_f=\bigcup_{n\in\Nset} B(x_n, \eps_{f_n})$.
 Now, assume that $|X|<\cov(\MM)$. To finish the proof it suffices to note that
 for every $x\in X$  the set $N_x=\{f\in \Pset: f$ is one-to-one and
 $x\not \in U_f\}$ is nowhere dense in the (closed) subspace of $\Pset$
 consisting of one-to-one functions.

 \smallskip

 To see (ii) recall that every $\smz$ set is of universal measure zero by
 Proposition \ref{prop:UMZ}, so
it suffices to show that $\non(\NN)$ is the minimal size of a space which is
not of universal measure zero. To see this note that any diffuse Borel probability
measure $\mu$ on a separable metric space $X$ extends to a finite Borel diffuse
probability measure $\overline\mu $ on its completion $\hat X$ by putting $\overline \mu(A)=\mu (A\cap X)$.
Now, by a theorem of Parthasarathy \cite{MR0651013} there is a measure
preserving Borel isomorphism between $\hat X$ with $\overline \mu$ and $[0,1]$
equipped with the Lebesgue measure, hence $|X|\geq \non(\NN)$.

 \smallskip

  For (iii) it suffices to see that $\nonS \Pset\leq \covM$. Let $F\subs\Pset$
  be such that $|F|=\covM$ and
  $\forall g\in\Pset\ \exists f\in F\ \forall n\in\Nset\ f(n)\neq g(n)$.
  We shall show that $F\not\in\smz(\Pset)$. Let $\seq{s_n}$ be a sequence
  of elements of $2^{<\Nset}$ such that $|s_n|=n+1$. Define $g\in\Pset$
  by putting $g(n)=s_n(n)$. Then there is an
   $f\in F$ such that $ f(n)\neq g(n)$ for all $n\in\Nset$. This, however, means,
   that $s_n\not \subs f$  for any $n \in\Nset$. That is no sequence of open sets
   of diameter $\frac{1}{2^{n+1}}$ covers $F$.

 \smallskip

 The proof of (iv) is similar. First we shall show that $\nonS {\Cset}\leq\mathfrak{eq}$.
 To that end let $X\subs \Cset$ be of size less that $\mathfrak{eq}$,
 and let a sequence $\seq{\eps_n}$ of positive real numbers be given.
 Let $h\in\Pset$ be such that $\frac{1}{2^{h(n)}}\leq \eps_n$
 for every $n\in\Nset$. For each $n\in\Nset$ enumerate $2^n$  -- the set of
 binary sequences of length $n$ -  as $\{s^n_m:m< \Cset\}$.
 For every $x\in X$ let $f_x\in\Pset$ be defined by
$$f_x(n)=m\text{ if and only if } x\rest h(n)=s^{h(n)}_m.$$
Then $f_x(n)\leq 2^{h(n)}$ for every $x\in X$ and $n\in\Nset$.
As $|X|<\mathfrak{eq}$, there is a $g\in\Pset$ such that
$f_x\cap g\neq\emptyset$ for every $x\in X$, and without loss of generality, $g(n)\leq 2^{h(n)}$ for every
$n\in\Nset$ (values above are irrelevant, and can be changed).
Then $\seq{\langle s^{h(n)}_{g(n)}\rangle}$ covers $X$.

On the other hand, assume that $F\subs\Pset$ is a bounded family of
size less than $\nonS {\Cset}$. Let $h\in\Pset$ be such that
$f(n)\leq 2^{h(n)}$ for every $f\in F$ and $n\in\Nset$.
Let $\{s^n_m:m < \Cset\}$ enumerate $2^n$ as above.
For every $f\in \Pset$ let
$$
  x_f=s_{f(0)}^{h(0)}{}^{\smallfrown}
  s_{f(1)}^{h(1)}{}^{\smallfrown}s_{f(2)}^{h(2)}{}^{\smallfrown} \dots
$$
and consider the set $X=\{x_f:f\in F\}$. As $X$ is $\smz$, there is a sequence $\seq{t_n}$ such that
\begin{enumerate}
\item $t_n\in 2^{\sum_{i\leq n} h(n)} $, and
\item $\forall f\in F$ $\exists n\in \Nset $ $t_n\subs x_f$.
\end{enumerate}
Let $g\in\Pset$ be such that $g(n)\leq 2^{h(n)}$ for all $n\in\Nset$,
and $g(n)=m$ whenever $t_{n+1}=t_n{}^{\smallfrown} s^{h(n)}_m$. Note that $g(n)=f(n)$
whenever $t_n\subs x_f$.
 \end{proof}

In particular, the theorem evaluates $\non(\smz)$ for two groups: the compact
boolean group $\Cset$ and the Baer-Specker group $\mathbb Z^\Nset$.
In order to extend these to a wider class of groups we shall need  two easy observations:

As mentioned before,  strong measure zero is  a uniform property, in particular,
a uniformly continuous image of a $\smz$ set is $\smz$ and, on the other hand, if $X$
uniformly embeds into $Y$, then any set $A\subs X$ that is not $\smz{}$
in $X$ is not $\smz{}$ in $Y$ either. It follows that:

\begin{lem}\label{lem:Best}
\begin{enum}
\item
If a space $Y$ is a uniformly continuous image of a space $X$ then $\nonS{X}\leq\nonS{Y}$.
\item
If $X$ uniformly embeds into $Y$, then $\nonS{X}\geq\nonS{Y}$.
\end{enum}
\end{lem}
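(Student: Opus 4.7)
The plan is to derive both parts as immediate consequences of the two transfer principles recalled at the start of Section~2 (around the label \ref{uniform}): uniformly continuous images of strong measure zero sets are strong measure zero, and if $X$ uniformly embeds into $Y$ then any $A\subs X$ which fails to be $\smz$ in $X$ also fails to be $\smz$ in $Y$. Once these two facts are in hand, both inequalities reduce to a one-line witness-transfer argument.

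For (i), I would start from a witness $B\subs Y$ with $\abs{B}=\nonS{Y}$ and $B\notin\smz(Y)$. Using surjectivity of the uniformly continuous map $f\colon X\to Y$, choose for each $b\in B$ a preimage $a_b\in f^{-1}(b)$ and set $A=\{a_b:b\in B\}$. Then $\abs{A}\leq\abs{B}$ and $f(A)=B$. If $A$ were $\smz$ in $X$, the first transfer principle would force $B=f(A)$ to be $\smz$ in $Y$, contradicting the choice of $B$. Hence $A\notin\smz(X)$, which gives $\nonS{X}\leq\abs{A}\leq\nonS{Y}$.

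For (ii), let $\phi\colon X\to Y$ be the uniform embedding, and take $A\subs X$ with $\abs{A}=\nonS{X}$ and $A\notin\smz(X)$. The second transfer principle yields $\phi(A)\notin\smz(Y)$, and since $\phi$ is injective $\abs{\phi(A)}=\abs{A}$; therefore $\nonS{Y}\leq\abs{\phi(A)}=\nonS{X}$. There is essentially no obstacle in either direction; the only point worth verifying (if one wants to unfold the second transfer principle rather than cite it) is that a putative $\smz$-cover of $\phi(A)$ in $Y$ can be intersected with $\phi(X)$ without increasing diameters and then pulled back along the uniformly continuous $\phi^{-1}\colon\phi(X)\to X$ to a cover of $A$ of prescribed small diameters in $X$.
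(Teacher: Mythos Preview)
Your proposal is correct and is exactly what the paper intends: the lemma is stated immediately after the two transfer principles with nothing more than ``It follows that'', and your argument simply spells out the obvious witness-transfer that this phrase encodes. There is no separate proof in the paper to compare against.
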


\begin{lem}\label{hur}
A $\mathsf{CLI}$ Polish group is either
locally compact, or  contains a uniform copy of $\Pset$.
\end{lem}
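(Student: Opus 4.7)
The plan is to exploit the failure of local compactness of $\grG$ to build a tree $\{g_s : s \in \Nset^{<\Nset}\}$ of group elements whose branches realize a uniform embedding of the Baire space $\Pset$ into $\grG$.

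Fix a complete left-invariant compatible metric $d$ on $\grG$ and assume $\grG$ is not locally compact. Then no open ball around $1_\grG$ has compact closure, which yields the key local fact to be used throughout: for every $\eta > 0$ there exist $\delta > 0$ and an infinite set $\{h_n : n \in \Nset\} \subs B^\circ(1_\grG, \eta)$ that is pairwise $\delta$-separated. Pick summable positive reals $\eps_n \downarrow 0$ (say $\eps_n = 2^{-n-3}$).

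Recursively in $|s|$, I would choose $g_s \in \grG$ and, for each $n$, an element $h_n^s \in B^\circ(1_\grG, \eps_{|s|+1})$ together with $\delta_{|s|+1} > 0$ such that $g_\emptyset = 1_\grG$, $g_{s^\smallfrown n} = g_s \cdot h_n^s$, the $h_n^s$ are pairwise $\delta_{|s|+1}$-separated, and $\delta_{n+1} > 8 \sum_{k > n+1} \eps_k$. The existence of the separated set is exactly the observation above applied with $\eta = \eps_{|s|+1}$; by shrinking $\eps_{|s|+1}$ further if necessary, we can simultaneously secure the lower bound on $\delta_{|s|+1}$. Left-invariance of $d$ then lifts the separation to the whole tree: $d(g_{s^\smallfrown n}, g_s) \leq \eps_{|s|+1}$ and $d(g_{s^\smallfrown n}, g_{s^\smallfrown m}) \geq \delta_{|s|+1}$ whenever $n \neq m$. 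Completeness of $d$ ensures that for each $x \in \Pset$ the Cauchy sequence $(g_{x\rest n})_n$ converges to some $\phi(x) \in \grG$ with $d(\phi(x), g_{x\rest n}) \leq \sum_{k > n} \eps_k$.

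It remains to verify that $\phi \colon \Pset \to \grG$ is a uniform embedding with respect to the metric of least difference $d(x,y) = 2^{-|x \wedge y|}$ on $\Pset$. If $x\rest n = y\rest n$, then $g_{x\rest n} = g_{y\rest n}$ and hence $d(\phi(x), \phi(y)) \leq 2\sum_{k > n} \eps_k$, yielding uniform continuity of $\phi$. Conversely, if $|x \wedge y| = n$, then $g_{x\rest(n+1)}$ and $g_{y\rest(n+1)}$ are descendants of $g_{x\rest n}$ through distinct immediate successors, so $d(g_{x\rest(n+1)}, g_{y\rest(n+1)}) \geq \delta_{n+1}$, and by the triangle inequality $d(\phi(x), \phi(y)) \geq \delta_{n+1} - 2\sum_{k > n+1} \eps_k \geq \delta_{n+1}/2$; this positive lower bound depends only on $n = |x \wedge y|$, so $\phi^{-1}$ is uniformly continuous on $\phi(\Pset)$.

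The only genuinely delicate point is the recursive construction, and even there the real work is just extracting the separated infinite sets from non-local-compactness; once that is in hand, left-invariance and completeness of $d$ take care of the rest. I expect no obstacle beyond careful bookkeeping of the constants $\eps_n$ and $\delta_n$.
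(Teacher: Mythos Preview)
Your proposal is correct and follows essentially the same route as the paper's proof: extract, from the failure of total boundedness of balls in the complete left-invariant metric, an infinite $\delta$-separated set inside each small ball around $1_\grG$, and use left-multiplication to propagate this into an $\Nset^{<\Nset}$-indexed tree whose branches give a uniform embedding of $\Pset$. The paper writes the tree nodes explicitly as products $y_s = x_0^{s(0)}\cdots x_{n-1}^{s(n-1)}$ with a single separated family $\{x_n^i\}$ per level (and separation condition $d(x_n^i,x_n^j) > 5\eps_{n+1}$), which is exactly your $g_{s^\smallfrown n} = g_s \cdot h_n^s$ once one notes that $h_n^s$ can be taken to depend only on $|s|$.

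One small point of presentation: you first fix $\eps_n = 2^{-n-3}$ and then say ``by shrinking $\eps_{|s|+1}$ further if necessary''; in fact the $\eps_n$ must be chosen recursively together with the $\delta_n$ (since $\delta_{n+1}$ is determined by $\eps_{n+1}$, and only then can you force the tail $\sum_{k>n+1}\eps_k$ small), so the initial explicit choice should simply be dropped. This is a cosmetic issue, not a gap.
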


\begin{proof}
Let $\grG$ be a group equipped with a complete, left-invariant metric $d$.
Assuming  $\grG$ is not locally compact no open set is totally bounded,
hence for every $\eps>0$ exists $\del>0$ such that the ball $B(1,\eps)$
contains an infinite set of points that are pairwise at least $\del$ apart.
Using  this fact construct, for each $n\in\Nset$, $\eps_n>0$ and an infinite
set $\{x_n^i:i\in\Nset\}\subs B(1,\eps_n)$ such that
if $i\neq j$ then $d(x_n^i,x_n^j)>5\eps_{n+1}$.
For $s\in\Nset^n$ let
$y_s=x_0^{s(0)}\cdot x_1^{s(1)}\cdot x_{n-2}^{s(n-2)}\cdot\dots\cdot x_{n-1}^{s(n-1)}$.
The construction ensures that for any $f\in\Pset$ the sequence
$\seq{y_{f\rest n}}$ is Cauchy. Let $z_f$ be its limit.
It is easy to check that since $d$ is left-invariant,
the mapping $f\mapsto z_f$ is a uniform embedding of $\Pset$ into $\grG$.
\end{proof}

\begin{thm}
Let $\grG$ be a $\mathsf{CLI}$ Polish group.
\begin{enum}
\item{} If $\grG$ is locally compact, then $\nonS\grG=\eq$,
\item{} if $\grG$ is not locally compact, then $\nonS\grG=\covM$.
\end{enum}
\end{thm}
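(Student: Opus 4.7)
The plan is to dispatch (ii) in a single paragraph using the machinery already in place, and then handle (i) by a $\sigma$\nobreakdash-compactness argument combined with a bilateral comparison with $\Cset$.

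For (ii), if $\grG$ is non-locally compact and $\CLI$, Lemma~\ref{hur} produces a uniform copy of $\Pset$ inside $\grG$. Lemma~\ref{lem:Best}(ii) then gives $\nonS{\grG}\leq\nonS{\Pset}$, and $\nonS{\Pset}=\covM$ by Theorem~\ref{thm:Roth}(iii). For the reverse inequality, $\grG$ is not $\smz$ (it contains a uniform copy of $\Pset$, which is not $\smz$), so Rothberger's bound Theorem~\ref{thm:Roth}(i) yields $\covM\leq\nonS{\grG}$.

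For (i), I would first use that a locally compact Polish group is $\sigma$\nobreakdash-compact and write $\grG=\bigcup_n K_n$ with each $K_n$ compact. The upper bound $\nonS{\grG}\leq\eq$ will come from embedding $\Cset$ uniformly into $\grG$: assuming $\grG$ is not $\smz$ it is uncountable, so some $K_n$ is uncountable Polish and thus contains a topological Cantor set $C$; since $C$ is a compact metric space homeomorphic to $\Cset$, the homeomorphism is automatically a uniform homeomorphism, giving a uniform embedding of $\Cset$ into $\grG$. Then Lemma~\ref{lem:Best}(ii) together with Theorem~\ref{thm:Roth}(iv) yield $\nonS{\grG}\leq\nonS{\Cset}=\eq$. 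For the lower bound $\nonS{\grG}\geq\eq$, I would use the dual half of Lemma~\ref{lem:Best}: every compact metric space is a continuous surjective image of $\Cset$, and continuous maps out of a compact space are uniformly continuous, so Lemma~\ref{lem:Best}(i) gives $\nonS{K_n}\geq\nonS{\Cset}=\eq$ whenever $K_n$ is not $\smz$. Now for any $X\subseteq\grG$ with $\abs X<\eq$, each slice $X\cap K_n$ lies in a compact set that is either $\smz$ or has $\non$\nobreakdash-number at least $\eq>\abs{X\cap K_n}$, so $X\cap K_n\in\smz(\grG)$ in either case; $\sigma$\nobreakdash-additivity of $\smz(\grG)$ (Proposition~\ref{sigma}) then gives $X\in\smz(\grG)$.

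The only mildly delicate point is the verification that every non-$\smz$ locally compact Polish group genuinely contains a \emph{uniform} copy of $\Cset$; this is where one uses that compactness upgrades a topological Cantor subset to a uniform one, so the argument never leaves the realm of facts proved in the preceding subsections. Everything else is bookkeeping: (ii) is a direct two-line consequence of Lemmas~\ref{hur} and~\ref{lem:Best}(ii) and Theorem~\ref{thm:Roth}, while (i) is the symmetric application of both halves of Lemma~\ref{lem:Best} — the surjection $\Cset\twoheadrightarrow K_n$ gives one inequality, the embedding $\Cset\hookrightarrow\grG$ gives the other.
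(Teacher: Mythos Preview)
Your argument is correct and follows essentially the same approach as the paper: both proofs use Lemma~\ref{hur} and Lemma~\ref{lem:Best}(ii) for case~(ii), and for case~(i) both combine the uniform embedding $\Cset\hookrightarrow\grG$ (for the upper bound) with the uniformly continuous surjections $\Cset\twoheadrightarrow K_n$ and \si additivity of $\smz(\grG)$ (for the lower bound). You are a bit more explicit than the paper about justifying the embedding of $\Cset$ via compactness and about handling compact pieces $K_n$ that might themselves be $\smz$, but the structure of the two proofs is the same.
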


\begin{proof} As every Polish group contains a (uniform) copy of $\Cset$,
$$\covM\leq \nonS\grG\leq \eq$$
for every Polish group $\grG$. Similarly, $\nonS\grG\leq\covM$  whenever
$\grG$ contains a uniform copy of $\Pset$ by Theorem \ref{thm:Roth}.
In particular, $\nonS\grG = \covM$ if $\grG$ is a
non-locally compact $\mathsf{CLI}$  group.

\smallskip

So all that remains to be seen is that $\eq\leq \nonS\grG$ for any locally compact
Polish group $\grG$.
Write $\grG$ as an increasing union of compact subsets $K_n$, $n\in\Nset$.
As each $K_n$ is a uniformly continuous image of
$\Cset$ (see e.g.\ \cite[Theorem 4.18]{MR1321597}), every subset of $K_n$
which is not of strong measure zero has size at least $\eq$ by  Lemma~\ref{lem:Best}(i)
and Theorem~\ref{thm:Roth}(iii). On the other hand, as $\smz(\grG)$ is a \si ideal
(Proposition \ref{sigma}), every subset of $\grG$ which is not $\smz$ has
a non-$\smz$ intersection with one of the $k_n$'s, hence
$\eq\leq \nonS\grG$.
\end{proof}

It is, of course, a natural question whether the result of the Theorem
(or of the preceding lemma) remains true also for
Polish groups which are not  $\mathsf{CLI}$.

\bigskip

The final remark of  this section deals with \emph{transitive covering for category}
(considered by
Bartoszy\'nski~\cite[2.7]{MR1350295} for $\Cset$, and Miller and
Stepr\={a}ns~\cite{MR2224048} for general Polish group and further studied in \cite{MR3453581}):
$$
  \cov^*(\MM(\grG))=\min\{|A|: \ A\subs\grG \text{ and }
  A\cdot M=\grG \text{ for some meager set } M\subs \grG\}
$$

By Theorem \ref{kysiak}, for a locally compact group $\grG$ strong measure zero
sets coincide with the sets whose meager translates do not cover $\grG$, hence,
in particular, $\nonS \grG =\cov^*(\MM(\grG))$ for every locally compact group $\grG$.
It follows from Prikry's Proposition \ref{prop:triv} that
$\cov^*(\MM(\grG))\leq \nonS \grG $, for every Polish group $\grG$, hence
$\covM\leq\cov^*(\MM(\grG))\leq\eq$ for any Polish group. As $\nonS \grG=\covM$
for all $\mathsf{CLI}$  groups which are not locally compact, we can conclude that:

\begin{coro}
$\nonS \grG =\cov^*(\MM(\grG))$ for any $\mathsf{CLI}$  group.
\end{coro}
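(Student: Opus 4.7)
The plan is to split into two cases based on whether $\grG$ is locally compact or not, and use the facts just established in the text.

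First, suppose $\grG$ is locally compact. The Galvin-Mycielski-Solovay theorem for locally compact groups (Theorem~\ref{kysiak}) says that $A\in\smz(\grG)$ if and only if $A\cdot M\neq\grG$ for every meager $M\subs\grG$. So if $\kappa<\cov^*(\MM(\grG))$, then by definition of $\cov^*$, no subset of $\grG$ of size $\leq\kappa$ has meager translates covering $\grG$, i.e. every such set $A$ satisfies $A\cdot M\neq\grG$ for every meager $M$; by Theorem~\ref{kysiak} each such $A$ is in $\smz(\grG)$, so $\kappa<\nonS\grG$. Conversely, the inequality $\cov^*(\MM(\grG))\leq\nonS\grG$ holds in every Polish group by Prikry's Proposition~\ref{prop:triv}: a non-$\smz$ witness of size $\nonS\grG$ must, by the contrapositive of Prikry, admit a meager $M$ with $A\cdot M=\grG$. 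Combining both directions gives $\nonS\grG=\cov^*(\MM(\grG))$.

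Second, suppose $\grG$ is a CLI Polish group that is not locally compact. By the preceding theorem, $\nonS\grG=\covM$. On the other hand, the general bounds recalled in the text (valid for every Polish group) give
\[
  \covM\leq\cov^*(\MM(\grG))\leq\nonS\grG,
\]
where the left inequality is a consequence of the Bartoszy\'nski-style characterization $\covM\leq\cov^*(\MM(\grG))$ noted in the paragraph preceding the corollary, and the right inequality is again Proposition~\ref{prop:triv}. Sandwiching $\cov^*(\MM(\grG))$ between $\covM$ and $\nonS\grG=\covM$ yields equality.

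Combining the two cases, $\nonS\grG=\cov^*(\MM(\grG))$ for every CLI Polish group. The only substantive content is the locally compact case, which is really just an unpacking of Galvin-Mycielski-Solovay-Kysiak-Fremlin; the non-locally compact case is a purely formal squeeze once the identification $\nonS\grG=\covM$ from the previous theorem is in hand. Consequently I do not anticipate any genuine obstacle beyond careful bookkeeping of which direction requires which hypothesis.
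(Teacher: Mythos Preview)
Your proof is correct and follows essentially the same route as the paper: split into the locally compact case (handled by the Galvin--Mycielski--Solovay/Kysiak--Fremlin equivalence) and the non-locally-compact \CLI{} case (a squeeze using $\covM\leq\cov^*(\MM(\grG))\leq\nonS\grG$ together with $\nonS\grG=\covM$ from the preceding theorem). The only cosmetic difference is that in the locally compact case you verify both inequalities separately, whereas the paper simply reads off the equality directly from the biconditional in Theorem~\ref{kysiak}.
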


The conjecture is that the two numbers coincide for any Polish group.

\begin{question}
Is $\cov^*(\MM(\grG))=\covM$ for all Polish groups which are not locally compact?
\end{question}


\section{Strong measure zero and Hausdorff measures}
\label{sec:haus}

As mentioned above, there is a profound connection between strong measure zero and
Hausdorff measures.
The following characterizations of strong measure zero
in terms of Hausdorff measures and dimensions were proved
in \cite{Zin_M-add}. They are based on a classical
Besicovitch result~\cite{MR1555386,MR1555389}.

\subsection*{Hausdorff measure}

Before getting any further we need to review Hausdorff measure
and dimension. We set up the necessary definitions
and recall relevant facts.


A non-decreasing, right-continuous function $h:[0,\infty)\to[0,\infty)$
such that $h(0)=0$ and $h(r)>0$ if $r>0$ is called a \emph{gauge}.
Gauges are often ordered as follows, cf.~\cite{MR0281862}:
$$
  g\prec h\quad \overset{\mathrm{def}}{\equiv}
  \quad\lim_{r\to0+}\frac{h(r)}{g(r)}=0.
$$

Notice that for any sequence $\seq{h_n}$ of gauges there is
a gauge $h$ such that $h\prec h_n$ for all $n$.

Given $\del>0$, call a cover $\mc A$ of a set $E\subs X$ a
\emph{$\del$-fine cover} if $\forall A\in\mc A\ \diam A\leq\del$.
If $h$ is a gauge,
the \emph{$h$-dimensional Hausdorff measure} $\hm^h(E)$ of
a set $E\subs X$ is defined thus:
For each $\del>0$ define
$$
  \hm^h_\delta(E)=
  \inf\left\{\sum_{n\in\Nset}h(\diam E_n):
  \text{$\{E_n\}$ is a countable $\delta$-fine cover of $E$}\right\}
$$
and let
$$
  \hm^h(E)=\sup_{\delta>0}\hm^h_\delta(E).
$$

In the common case of $h(r)=r^s$ for some $s>0$, we write $\hm^s$ for
$\hm^h$, and likewise for $\hm^h_\delta$.

Properties of Hausdorff measures are well-known. The following, including
the two lemmas, can be found e.g.~in~\cite{MR0281862}.
The restriction of $\hm^h$ to Borel sets is a $G_\del$-regular Borel measure.
Recall that a sequence of sets $\seq{E_n}$ is termed a \emph{$\lambda$-cover}
of $E\subs X$ if every point of $E$ is contained in infinitely many $E_n$'s.

\begin{thm}[\cite{MR1555386,MR1555389}]\label{besic}
A metric space $X$ is \smz{} if and only if
$\hm^h(X)=0$ for each gauge $h$.
\end{thm}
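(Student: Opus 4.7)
The forward direction is soft. Given that $X$ is \smz{} and a gauge $h$, I would fix $\delta>0$ and $\eta>0$ and exploit $h(0)=0$ together with right-continuity at $0$ to pick $\eps_n\in(0,\delta]$ with $h(\eps_n)<\eta\cdot 2^{-n-1}$, so that $\sum_n h(\eps_n)<\eta$. Applying the \smz{} property to $\seq{\eps_n}$ produces a $\delta$-fine cover $\{U_n\}$ of $X$ with $\diam U_n\leq\eps_n$, and then $\hm^h_\delta(X)\leq\sum_n h(\diam U_n)\leq\sum_n h(\eps_n)<\eta$. Since $\eta$ and $\delta$ are arbitrary, $\hm^h(X)=0$.

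For the converse, given an arbitrary sequence $\seq{\eps_n}$ of positive reals, I would first reduce to the case $\eps_n\searrow 0$ strictly: any tail along which $\eps_n$ stays bounded below by a positive constant can be handled using separability of $X$, and permuting indices only permutes the resulting $U_n$'s. The central idea is then to manufacture a gauge $h$ tailored to the sequence by setting $h(0)=0$, $h(r)=1/n$ for $r\in[\eps_{n+1},\eps_n)$, and $h(r)=1$ for $r\geq\eps_1$. This function is manifestly non-decreasing, right-continuous and positive off $0$, hence a gauge.

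Applying the hypothesis $\hm^h(X)=0$ then yields a cover $\{E_k:k\in\Nset\}$ of $X$ with $\sum_k h(\diam E_k)<1$. For each $k$ let $n_k$ denote the unique integer with $\diam E_k\in[\eps_{n_k+1},\eps_{n_k})$, so that $h(\diam E_k)=1/n_k$ and in particular $\diam E_k<\eps_{n_k}$. The remaining task is to reshuffle $\{E_k\}$ into a cover $\{U_n\}$ with $\diam U_n\leq\eps_n$, which amounts to producing an injection $\phi\colon\Nset\to\Nset$ with $\phi(k)\leq n_k$ and then setting $U_{\phi(k)}=E_k$ while letting $U_n=\emptyset$ otherwise.

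This reshuffling is the only step I expect to demand genuine care. The key observation is that $\sum_k 1/n_k<1$ combined with the bound $1/n_k\geq 1/N$ for those $k$ with $n_k\leq N$ forces $|\{k:n_k\leq N\}|<N$ for every $N\in\Nset$. This is precisely the Hall condition for the desired matching: enumerating the $k$'s in order of non-decreasing $n_k$ as $k_1,k_2,\ldots$, the $j$-th element satisfies $j<n_{k_j}$, so a slot in $\{1,\ldots,n_{k_j}\}$ is always available when one greedily defines $\phi(k_j)$. The resulting cover then satisfies $\diam U_{\phi(k)}<\eps_{n_k}\leq\eps_{\phi(k)}$, which completes the argument.
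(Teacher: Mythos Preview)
Your argument is correct and follows essentially the same route as the paper's proof: both build a gauge with $h(\eps_n)$ of order $1/n$, pull a cover with $h$-sum below $1$, and then rearrange so that the $n$-th set has diameter below $\eps_n$. Your Hall-matching formulation of the rearrangement is equivalent to the paper's device of sorting the diameters decreasingly and reading off $n\,h(\del_n)<1$.

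One small point where the paper is more careful: before rearranging, it passes from $\diam U_n$ to some $\del_n>\diam U_n$ with $\sum_n h(\del_n)<1$, using right-continuity of $h$. This guarantees all the numbers being sorted are strictly positive. Your definition of $n_k$ via $\diam E_k\in[\eps_{n_k+1},\eps_{n_k})$ tacitly assumes $\diam E_k>0$, but a cover witnessing $\hm^h(X)=0$ may contain singletons; when there are infinitely many positive-diameter $E_k$'s your greedy injection fills every slot and leaves nowhere to put them. The paper's $\del_n$ trick is the cleanest patch, and it drops straight into your argument.
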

\begin{proof}
Suppose first that $X$ is \smz.
Let $h$ be a gauge. For each $\del>0$ pick a sequence $\sseq{\eps_n}$ such that
$0<\eps_n<\del$ and
$h(\eps_n)<\del2^{-n}$. There is a cover $\{U_n\}$ of $X$ such that
$\diam U_n<\eps_n$ for all $n$. Obviously $\sum h(\diam U_n)<\del$ and it
follows that $\hm^h_\del(X)\leq\del$. Let $\del\to0$ to get $\hm^h(X)=0$.

Now suppose that $\hm^h(X)=0$ for every gauge.
Let $\sseq{\eps_n}$ be a sequence of positive numbers.
Choose a gauge $h$ such that $h(\eps_n)>\frac1n$.
Since $\hm^h(X)=0$, there is a countable cover $\{U_n\}$ such that $\sum h(\diam U_n)<1$.
As $h$ is right-continuous, there are $\del_n>\diam U_n$ such that
$\sum h(\del_n)<1$. Since $\del_n>0$,
rearranging the sequence we may suppose that $\del_n$ decrease. Therefore
$n h(\del_n)\leq \sum_{i<n} h(\del_n)<1$. It follows that
$h(\del_n)<\frac1n<h(\eps_n)$ and consequently $\del_n<\eps_n$
and in particular $\diam U_n<\eps_n$, as required.
\end{proof}

We will need a cartesian product inequality.
Given two metric spaces $X$ and $Y$ with
respective metrics $d_X$ and $d_Y$, provide the cartesian product $X\times Y$
with the maximum metric
\begin{equation}\label{maxmetric}
  d\bigl((x_1,y_1),(x_2,y_2)\bigr)=\max(d_X(x_1,x_2),d_Y(y_1,y_2)).
\end{equation}
A gauge $h$ satisfies the \emph{doubling condition} or $h$ is \emph{doubling}
if $\varlimsup_{r\to0}\frac{h(2r)}{h(r)}<\infty$.
\begin{lem}[{\cite{MR0318427,MR1362951}}]\label{howroyd}
Let $X,Y$ be metric spaces, $g$ a gauge and $h$ a doubling gauge. Then
$\hm^h(X)\,\hm^g(Y)\leq\hm^{hg}(X\times Y)$.
\end{lem}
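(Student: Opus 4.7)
The plan is to combine a slicing (Fubini-style) inequality with a Frostman-type mass distribution argument, where the doubling hypothesis on $h$ is used to avoid losing a constant. If $\hm^h(X)=0$ or $\hm^g(Y)=0$ the statement is trivial, so assume both are positive.

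The slicing step is direct. Fix $\eps>0$ and $\del>0$, and take a countable Borel $\del$-fine cover $\{E_n\}$ of $X\times Y$ with $\sum_n h(\diam E_n)\,g(\diam E_n) \leq \hm^{hg}_\del(X\times Y)+\eps$. Since the product carries the maximum metric \eqref{maxmetric}, each projection satisfies $\diam\proj_X E_n\leq\diam E_n$ and $\diam\proj_Y E_n\leq\diam E_n$. For each $x\in X$ the subfamily $\{\proj_Y E_n : x\in\proj_X E_n\}$ is a $\del$-fine cover of $Y$ (because $\{E_n\}$ covers $X\times Y$), and therefore
$$\hm^g_\del(Y)\;\leq\;\sum_{n:\,x\in\proj_X E_n} g(\diam E_n)\;=\;\sum_n g(\diam E_n)\,\chi_{\proj_X E_n}(x).$$

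To turn this pointwise bound into a bound on $\hm^h(X)\,\hm^g_\del(Y)$, I would invoke a Frostman/Howroyd lemma for metric spaces: because $h$ is a doubling gauge, whenever $\alpha<\hm^h(X)$ there is a compactly supported Borel measure $\mu$ on $X$ with $\mu(X)\geq\alpha$ and $\mu(A)\leq h(\diam A)$ for every Borel $A\subseteq X$. Integrating the previous inequality against $\mu$ and using Tonelli gives
$$\mu(X)\,\hm^g_\del(Y)\;\leq\;\sum_n g(\diam E_n)\,\mu(\proj_X E_n)\;\leq\;\sum_n g(\diam E_n)\,h(\diam E_n)\;\leq\;\hm^{hg}_\del(X\times Y)+\eps.$$
Sending $\eps\to 0$, then $\del\to 0$, and finally $\alpha\to\hm^h(X)$ yields the desired inequality.

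The main obstacle is justifying the Frostman/Howroyd lemma used in the integration step: one needs a Borel measure of mass arbitrarily close to $\hm^h(X)$ satisfying the bound $\mu(A)\leq h(\diam A)$ \emph{without} a multiplicative constant. In Euclidean space this is classical via net measures built on dyadic cubes; in a general separable metric space there is no dyadic structure, and the doubling property of $h$ is precisely what powers a stopping-time construction of such a $\mu$ on a sufficiently rich compact subset of $X$ (this is exactly the content of~\cite{MR1362951}). A secondary technicality is inner approximation of $X$ by compacta of positive $\hm^h$-measure, which is available for the analytic spaces of interest. Once these ingredients are in place, the slicing argument above is routine.
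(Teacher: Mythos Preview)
The paper does not supply a proof of this lemma; it is quoted from the literature (Kelly~\cite{MR0318427} and Howroyd~\cite{MR1362951}) and used as a black box. Your sketch is precisely the Howroyd argument: slice the product over $x\in X$ to bound $\hm^g_\del(Y)$ by $\sum_n g(\diam E_n)\chi_{\proj_X E_n}(x)$, then integrate against a Frostman measure on $X$ satisfying $\mu(A)\leq h(\diam A)$, whose existence (without a multiplicative constant, on an arbitrary analytic metric space) is exactly the content of Howroyd's extension of Frostman's lemma via weighted Hausdorff measures.

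The one point worth flagging is the scope of the Frostman step. As you note, Howroyd's theorem requires $X$ to be analytic (or at least to contain compacta of positive $\hm^h$-measure), whereas the lemma is stated for arbitrary metric spaces. In the paper's only application (the proof of Theorem~\ref{prodHnull}, implication (iv)$\Rightarrow$(i)) the roles are arranged so that the doubling gauge $g$ is paired with the \emph{compact} set $C_I\subs\Cset$, so the Frostman measure lives on a compact space and there is no difficulty. Your proof is therefore adequate for every use the paper makes of the lemma, but strictly speaking does not cover the full generality of the statement as written; for that one must either restrict to analytic $X$ or appeal to the more delicate arguments in the cited references.
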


The following lemma on uniformly
continuous mappings is well-known, see, e.g., \cite[Theorem 29]{MR0281862}.
%
\begin{lem}\label{lipschitz}
Let $f:(X,d_X)\to (Y,d_Y)$ be a uniformly continuous mapping
and $g$ its modulus, i.e., $d_Y(f(x),f(y))\leq g(d_X(x,y))$ for all $x,y\in X$.
%
%
Then $\hm^h(f(X))\leq \hm^{h{\circ}g}(X)$ for any gauge $h$.
\end{lem}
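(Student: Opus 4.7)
The plan is to push forward covers from $X$ to $f(X)$ and use the definition of modulus together with monotonicity of $h$ to bound the resulting Hausdorff sums.

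First, I would fix $\delta>0$ and take any countable $\delta$-fine cover $\{E_n\}$ of $X$. Then $\{f(E_n)\}$ is a countable cover of $f(X)$. For each $n$, the modulus inequality $d_Y(f(x),f(y))\leq g(d_X(x,y))$ together with monotonicity of $g$ (which one may assume, replacing $g$ by $\tilde g(r)=\sup_{s\leq r}g(s)$ if necessary) gives $\diam f(E_n)\leq g(\diam E_n)\leq g(\delta)$. So $\{f(E_n)\}$ is a $g(\delta)$-fine cover of $f(X)$. Since $h$ is non-decreasing,
\[
  h(\diam f(E_n))\leq h\bigl(g(\diam E_n)\bigr)=(h\circ g)(\diam E_n).
\]

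Summing over $n$ and taking the infimum over $\delta$-fine covers of $X$ yields
\[
  \hm^h_{g(\delta)}(f(X))\leq \hm^{h\circ g}_{\delta}(X)\leq \hm^{h\circ g}(X).
\]
Now I would let $\delta\to 0+$. Uniform continuity of $f$ forces $g(\delta)\to 0$ (if this is not built into the hypothesis on $g$, replace $g$ by the genuine modulus $\omega_f(\delta)=\sup\{d_Y(f(x),f(y)):d_X(x,y)\leq\delta\}$, which still dominates and tends to $0$). Hence $\hm^h_{g(\delta)}(f(X))\to \hm^h(f(X))$, and the desired inequality $\hm^h(f(X))\leq \hm^{h\circ g}(X)$ drops out.

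The only subtlety, and the main thing to get right, is ensuring that $h\circ g$ behaves like a gauge and that $g(\delta)\to 0$, so the supremum defining $\hm^h(f(X))$ really is attained in the limit. Both points are routine once one agrees to replace $g$ by its non-decreasing, right-continuous regularization; after that the argument is a one-line application of monotonicity of $h$ inside the definition of $\hm^h_\delta$.
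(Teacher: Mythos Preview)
Your argument is correct and is exactly the standard push-forward-of-covers proof. The paper does not supply its own proof of this lemma; it merely cites it as well-known (Rogers, \emph{Hausdorff Measures}, Theorem~29), and the proof there follows the same outline you give.
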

Recall that the \emph{Hausdorff dimension} of $X$ is defined by
$$
  \hdim X=\sup\{s>0:\hm^s(X)=\infty\}=\inf\{s>0:\hm^s(X)=0\}.
$$
Properties of Hausdorff dimension are well-known. In particular,
$\hdim X=0$ if $X$ is countable; and if $f:X\to Y$ is Lipschitz, then
$\hdim f(X)\leq\hdim X$.

\begin{coro}[\cite{Zin_M-add}]\label{basicHnull}
Let $X$ be a metric space. The following are equivalent.
\begin{enum}
\item $X$ is \smz{},
\item $\hdim f(X)=0$ for each uniformly continuous mapping $f$ on $X$,
\item $\hdim(X,\rho)=0$ for each uniformly equivalent metric $\rho$ on $X$.
\end{enum}
\end{coro}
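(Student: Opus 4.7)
The plan is to prove (i) $\Rightarrow$ (ii) $\Rightarrow$ (iii) $\Rightarrow$ (i). The first two implications are routine. For (i) $\Rightarrow$ (ii), a uniformly continuous image of an \smz{} space is again \smz{} (by the uniform-property remark at the start of Section~2), so $f(X)$ is \smz{}; applying Theorem~\ref{besic} with the gauges $h(r)=r^s$ for $s>0$ gives $\hm^s(f(X))=0$, hence $\hdim f(X)=0$. For (ii) $\Rightarrow$ (iii), note that the identity map $\id\colon(X,d)\to(X,\rho)$ is uniformly continuous precisely because $d$ and $\rho$ are uniformly equivalent, so (ii) applied to it gives $\hdim(X,\rho)=0$.

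The substantive step is (iii) $\Rightarrow$ (i), which I would prove by contrapositive. Assume $X$ is not \smz{}; my goal is to construct a uniformly equivalent metric $\rho$ with $\hdim(X,\rho)\geq 1$. The strategy is to find a concave gauge $\phi$ with $\hm^\phi(X,d)>0$, and then distort the metric by $\phi$: set $\rho(x,y)=\phi(d(x,y))$. Concavity together with $\phi(0)=0$ makes $\phi$ subadditive, so $\rho$ satisfies the triangle inequality; continuity of $\phi$ at $0$ together with strict positivity on $(0,\infty)$ makes $\rho$ uniformly equivalent to $d$. Monotonicity and continuity of $\phi$ give $\diam_\rho E=\phi(\diam_d E)$ for every $E\subs X$, so a $\delta$-fine cover in $\rho$ is exactly a $\phi^{-1}(\delta)$-fine cover in $d$; a direct change of variables then yields $\hm^1((X,\rho))=\hm^\phi((X,d))>0$, whence $\hdim(X,\rho)\geq 1>0$, contradicting (iii).

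The main obstacle is the construction of $\phi$. Since $X$ is not \smz{}, there is a sequence $\seqeps$ of positive reals, which we may assume tends to $0$, such that no cover $\{U_n\}$ of $X$ satisfies $\diam U_n\leq\eps_n$ for every $n$. Padding with empty sets shows that the non-existence of such a cover is preserved under passing to subsequences, so I would pass to a rapidly decreasing subsequence with $\eps_{n+1}\leq\eps_n/(4n)$, and then take $\phi$ to be the piecewise linear function through $(0,0)$ and the points $(\eps_n,2/n)$ for $n\geq 1$, extended linearly beyond $\eps_1$. The rapid decrease forces the slope on $[\eps_{n+2},\eps_{n+1}]$ to exceed that on $[\eps_{n+1},\eps_n]$, so $\phi$ is concave. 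Finally, if some cover $\{U_n\}$ of $X$ satisfied $\sum\phi(\diam U_n)<1$, reordering to make $\diam U_n$ decrease would give $n\phi(\diam U_n)<1<2=n\phi(\eps_n)$, so by monotonicity $\diam U_n<\eps_n$, contradicting the choice of $\seqeps$. Hence $\hm^\phi(X)\geq 1>0$, completing the argument.
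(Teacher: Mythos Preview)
Your proof is correct and rests on the same core idea as the paper's---distorting the metric by composing with a gauge---but the execution of (iii)$\Rightarrow$(i) differs. The paper argues \emph{directly}: given an arbitrary gauge $h$, it picks a strictly increasing convex gauge $g\prec h$, sets $\rho=g\circ d$, and uses Lemma~\ref{lipschitz} (with modulus $g^{-1}$) together with the hypothesis $\hm^1(X,\rho)=0$ to deduce $\hm^h(X,d)\leq\hm^{h\circ g^{-1}}(X,\rho)\leq\hm^1(X,\rho)=0$; Theorem~\ref{besic} then finishes. Your contrapositive instead builds a concave gauge $\phi$ from a witnessing sequence $\seqeps$ and verifies $\hm^\phi(X,d)\geq1$ by the reordering trick; this is valid, but the construction essentially reproves from scratch the harder direction of Theorem~\ref{besic}, so the paper's route is more economical. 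For (i)$\Rightarrow$(ii) there is a similar minor difference: you cite preservation of $\smz$ under uniformly continuous maps and then apply Theorem~\ref{besic} to the image, whereas the paper applies Lemma~\ref{lipschitz} with the gauge $h(r)=(g(r))^s$, $g$ the modulus of $f$; both are one-line arguments.
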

\begin{proof}
(i)\Implies(ii)
Let $s>0$ be arbitrary. Let $f:X\to Y$ be uniformly continuous and let $g$
be the modulus of $f$. Define $h(x)=(g(x))^s$.
Lemma~\ref{lipschitz}(i) yields $\hm^s(f(X))\leq\hm^h(X)$.
By the above theorem $\hm^h(X)=0$ and thus $\hm^s(f(X))=0$.
Since this holds for all $s>0$, it follows that $\hdim f(X)=0$.

(ii)\Implies(iii) is trivial.

(iii)\Implies(i)
Denote by $d$ the metric of $X$.
Let $h$ be a gauge. Choose a strictly increasing, convex (and in particular subadditive)
gauge $g$ such that $g\prec h$.
The properties of $g$ ensure that $\rho(x,y)=g(d(x,y))$
is a uniformly equivalent metric on $X$. The identity
map $\id_X:(X,\rho)\to(X,d)$ is of course uniformly continuous and its modulus
is $g^{-1}$, the inverse of $g$. Hence by Lemma~\ref{lipschitz}(i)
$\hm^h(X,d)\leq\hm^{h\circ g^{-1}}(X,\rho)$.
Since $g\prec h$, we have $\hm^{h\circ g^{-1}}(X,\rho)\leq\hm^1(X,\rho)$
and $\hm^1(X,\rho)=0$ by (ii). Thus $\hm^h(X,d)=0$.
\end{proof}

Our next goal is to characterize \smz{} by behavior of cartesian products. Recall that for $p\in\CCset$, $\cyl p=\{x\in\Cset:p\subs x\}$ denotes the cone determined
by $p$ and for $T\subs\CCset$ we let $\cyl T=\bigcup_{p\in T}\cyl p$.

The coordinatewise addition modulo $2$ makes $\Cset$ a compact topological group.
Routine proofs show that in the metric of the least difference
(defined in the introduction) the $1$-dimensional Hausdorff measure
$\hm^1$ coincides on Borel sets with its Haar measure, i.e.,
the usual product measure on $\Cset$. In particular $\hm^1(\Cset)=1$.

We consider the important \si ideal $\EE$ on $\Cset$ generated by closed null sets, i.e.,
the ideal of all subsets of $\Cset$ that are contained in an $F_\sigma$ set of Haar measure zero.
\begin{lem}\label{EC}
\begin{enum}
\item For each $I\in[\Nset]^\Nset$, the set $C_I=\{x\in\Cset:x\rest I\equiv0\}$
is in $\EE$.
\item For each $h\prec 1$ there is $I\in[\Nset]^\Nset$ such that $\hm^h(C_I)>0$.
\end{enum}
\end{lem}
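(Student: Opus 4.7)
Part (i) is immediate: $C_I=\bigcap_{i\in I}\{x\in\Cset:x(i)=0\}$ is an intersection of clopen hyperplanes and therefore closed; its Haar measure equals $\prod_{i\in I}\tfrac12=0$ because $I$ is infinite, so $C_I$ is a closed null set and hence belongs to $\EE$.

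For part (ii), the plan is a mass-distribution (Frostman-type) argument on a carefully chosen $C_I$. I will construct $I\in[\Nset]^\Nset$ together with a probability measure $\mu$ on $C_I$ obeying a bound of the form $\mu(A)\leq c\cdot h(\diam A)$ for every $A\subs\Cset$, so that the mass distribution principle yields $\hm^h(C_I)\geq c^{-1}\mu(C_I)=c^{-1}>0$.

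To build $I$, let $(n_k)_{k\in\Nset}$ enumerate $\Nset\setminus I$ in increasing order, and rewrite $h\prec 1$ as $g(m):=2^m h(2^{-m})\to\infty$ (as $m\to\infty$). The goal is a strictly increasing $(n_k)$ satisfying (a) $h(2^{-n_k})\geq 2^{-k}$, equivalently $g(n_k)\geq 2^{n_k-k}$, together with (b) $n_k-k\to\infty$, which guarantees that $I$ is infinite. Setting $m_d=\min\{m:g(m)\geq 2^d\}$, which is finite since $g\to\infty$, I will take $n_k=k+d_k$ for a non-decreasing integer sequence $d_k\to\infty$ satisfying $m_{d_k}\leq k+d_k$; such a sequence exists because for every fixed $d_0$ the number $m_{d_0}$ is finite, so $d_0$ is an admissible value of $d_k$ for all $k\geq m_{d_0}-d_0$. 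Then $(n_k)$ is strictly increasing (the increments $d_{k+1}-d_k$ are $\geq 0$), (a) holds because $g(n_k)\geq 2^{d_k}=2^{n_k-k}$, and (b) follows from $d_k\to\infty$.

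The measure $\mu$ is the pushforward of the Haar coin-flip measure on $2^\Nset$ along the canonical homeomorphism $2^\Nset\to C_I$ that places the free coordinates at the positions $n_0<n_1<\cdots$. A direct count gives $\mu(\cyl p)=2^{-K(m)}$ for each compatible $p$ of length $m$, where $K(m)=|\{k:n_k<m\}|$; in particular for $m\in(n_{K-1},n_K]$ we have $\mu(\cyl p)=2^{-K}$, which by (a) and monotonicity of $h$ is $\leq h(2^{-n_K})\leq h(2^{-m})=h(\diam\cyl p)$. For the finitely many cylinders with $|p|\leq n_0$ a uniform constant absorbs the discrepancy. Because $\Cset$ is ultrametric, any set $A$ of diameter $2^{-m}$ lies inside some length-$m$ cylinder, so the same bound transfers to arbitrary $A$. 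The mass distribution principle then delivers $\hm^h(C_I)>0$. The only genuinely delicate point is the simultaneous realization of (a) and (b) — the slack provided by $g\to\infty$ is exactly what reconciles making $n_k$ small enough to satisfy (a) with making $\{n_k\}$ sparse enough in $\Nset$ to satisfy (b).
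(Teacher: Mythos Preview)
Your approach is essentially the paper's: both use the mass distribution principle with the natural uniform (product) measure on $C_I$, and your inequality $\mu(\cyl p)=2^{-K(m)}\leq h(2^{-m})$ is exactly the paper's $2^{-\abs{n\setminus I}}\leq h(2^{-n})$. Part (i) is fine, and the measure-theoretic half of (ii) is correct (including the observation that in the ultrametric $\Cset$ any set of diameter $2^{-m}$ sits inside a single length-$m$ cylinder).

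There is, however, a genuine slip in your construction of $I$. You set $m_d=\min\{m:g(m)\geq 2^d\}$ and then infer from $n_k\geq m_{d_k}$ that $g(n_k)\geq 2^{d_k}$. This step tacitly assumes that $g(m)=2^m h(2^{-m})$ is non-decreasing; but $h$ non-decreasing only makes $h(2^{-m})$ non-increasing, and the product of an increasing with a non-increasing function may oscillate. For instance, if $h(2^{-5})=2^{-1}$ and $h(2^{-6})=2^{-5}$ (compatible with $h$ being a gauge), then $g(5)=16>g(6)=2$. So $n_k\geq m_{d_k}$ does not by itself yield $g(n_k)\geq 2^{d_k}$.

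The fix is immediate: redefine $m_d=\min\{m:\forall m'\geq m\ g(m')\geq 2^d\}$, which is still finite because $g\to\infty$; then your construction and the rest of the argument go through verbatim. The paper avoids this detour by directly requiring $2^{\abs{n\cap I}}\leq g(n)$ for all (sufficiently large) $n$, which is achievable precisely because $g(n)\to\infty$, and then reads off the Frostman bound $\lambda(E)\leq h(\diam E)$.
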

\begin{proof}
(i) Let $I\in[\Nset]^\Nset$. For each $n\in\Nset$,
the family $\{\cyl p:p\in C_I\rest n\}$ is obviously a $2^{-n}$-fine cover
of $C_I$ of cardinality $2^{\abs{n\setminus I}}$. Therefore
$\hm^1_{2^{-n}}(C_I)\leq2^{\abs{n\setminus I}}2^{-n} =2^{-\abs{n\cap I}}$.
Hence $\hm^1(C_I)\leq\lim_{n\to\infty}2^{-\abs{n\cap I}}=0$.

(ii)
$h\prec 1$ yields  $\frac{h(2^{-n})}{2^{-n}}\to\infty$. Therefore there is
$I\in[\Nset]^\Nset$ sparse enough to satisfy
$2^{\abs{n\cap I}}\leq\frac{h(2^{-n})}{2^{-n}}$,
i.e., $2^{-\abs{n\setminus I}}\leq h(2^{-n})$ for all $n\in\Nset$.
Consider the product measure $\lambda$ on $C_I$ given as follows: If $p\in2^n$ and
$\cyl p\cap C_I\neq\emptyset$, put
$\lambda(\cyl p\cap C_I)=2^{-\abs{n\setminus I}}$.
Straightforward calculation shows that $h(\diam E)\geq\lambda(E)$ for each $E\subs C_I$.
Hence $\sum_n h(\diam E_n)\geq\sum_n\lambda(E_n)\geq\lambda(C_I)=1$
for each cover $\{E_n\}$ of $C_I$ and $\hm^h(C_I)\geq 1$ follows.
\end{proof}

\begin{thm}\label{prodHnull}
The following are equivalent.
\begin{enum}
\item $X$ is \smz{},
\item $\hm^h(X\times Y)=0$ for every gauge $h$ and every compact metric space $Y$
such~that $\hm^h(Y)=0$,
\item $\hm^1(X\times E)=0$ for every $E\in\EE$.
\end{enum}
\end{thm}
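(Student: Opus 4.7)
The plan is to prove the chain (i)$\Rightarrow$(ii)$\Rightarrow$(iii)$\Rightarrow$(i). The middle step is an elementary \si additivity argument, while the outer two require care with the interaction between gauges and the maximum product metric \eqref{maxmetric}.

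For (i)$\Rightarrow$(ii), fix a gauge $h$, a compact $Y$ with $\hm^h(Y)=0$, and $\epsilon>0$. Using compactness of $Y$ and right-continuity of $h$ (slightly enlarging a countable small-weight $\delta$-fine cover to an open one and then extracting a finite subcover), I would obtain for each $n\in\Nset$ a finite open cover $\{V^n_m\}_{m\le M_n}$ of $Y$ with $\diam V^n_m<\delta$ and $\sum_m h(\diam V^n_m)<2^{-n-1}\epsilon$. Next choose $\epsilon_n\in(0,\delta)$ small enough that $M_n h(\epsilon_n)<2^{-n-1}\epsilon$ and apply the \smz{} property of $X$ to $\sseq{\epsilon_n}$, obtaining a cover $\{U_n\}$ of $X$ with $\diam U_n\le\epsilon_n$. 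The product family $\{U_n\times V^n_m\}$ covers $X\times Y$, and using $h(\max(a,b))\le h(a)+h(b)$ the total $h$-weight is bounded by
\[
\sum_n\Bigl(M_n h(\epsilon_n)+\sum_m h(\diam V^n_m)\Bigr)<\sum_n 2^{-n}\epsilon=\epsilon.
\]
Since $\delta,\epsilon$ are arbitrary, $\hm^h(X\times Y)=0$. The passage to \emph{finite} covers of $Y$ at each scale is crucial: a countable cover would multiply each $h(\diam V^n_m)$ by infinity through the factor $h(\diam U_n)$ appearing in the maximum bound.

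For (ii)$\Rightarrow$(iii), any $E\in\EE$ is contained in $\bigcup_n C_n$ with each $C_n$ closed in the compact group $\Cset$ (hence compact) and $\hm^1(C_n)=0$; apply (ii) with the identity gauge to each $C_n$ and sum.

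For (iii)$\Rightarrow$(i), I would argue the contrapositive. Suppose $X$ is not \smz. Theorem~\ref{besic} supplies a gauge $h$ with $\hm^h(X)>0$; replacing $h$ by $h(r)+\sqrt{r}$ I may assume $h\prec 1$, and then passing to the concave majorant on a right-neighborhood of $0$ I may assume in addition that $h$ is concave (hence doubling, since $h(2r)\le 2h(r)$ follows from concavity and $h(0)=0$), with $\hm^h(X)>0$ and $h\prec 1$ preserved. Define $g(r)=r/h(r)$; concavity makes $r\mapsto h(r)/r$ non-increasing, so $g$ is non-decreasing, and the conditions $h(0)=0$ and $h\prec 1$ ensure that $g$ is a gauge with $g\prec 1$ (since $g(r)/r=1/h(r)\to\infty$). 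By Lemma~\ref{EC}(ii) there is $I\in[\Nset]^\Nset$ with $\hm^g(C_I)>0$, and Lemma~\ref{EC}(i) gives $C_I\in\EE$. Since $h$ is doubling and $hg=\id$, Lemma~\ref{howroyd} yields
\[
\hm^1(X\times C_I)=\hm^{hg}(X\times C_I)\ge\hm^h(X)\,\hm^g(C_I)>0,
\]
contradicting (iii).

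The main technical obstacle lies in (iii)$\Rightarrow$(i), where the gauge produced by Theorem~\ref{besic} must be reshaped so that simultaneously (a) it is doubling, to license Lemma~\ref{howroyd}; (b) it satisfies $h\prec 1$, so that the partner gauge $g=r/h$ is a gauge and triggers Lemma~\ref{EC}(ii); and (c) $r/h(r)$ is monotone. All three are delivered in one stroke by passing to the concave majorant, which is the one genuinely conceptual step in the proof.
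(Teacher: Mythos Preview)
Your proof is correct and follows the same overall architecture as the paper: the chain (i)$\Rightarrow$(ii)$\Rightarrow$(iii)$\Rightarrow$(i), with the Howroyd--Kelly product inequality (Lemma~\ref{howroyd}) and the sets $C_I$ of Lemma~\ref{EC} doing the work in the hard contrapositive step.

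There is one tactical difference worth noting in (i)$\Rightarrow$(ii). The paper chooses $\eps_j=\min\{\diam U:U\in\mc U_j\}$, so that in the max metric $\diam(V_j\times U)=\diam U$ exactly; this eliminates the $h(\diam V_j)$ contribution entirely and yields $\sum_{W\in\mc W}h(\diam W)=\sum_j\sum_{U\in\mc U_j}h(\diam U)$. Your estimate $h(\max(a,b))\le h(a)+h(b)$ is cruder but perfectly valid, and your extra step of forcing $M_n h(\eps_n)<2^{-n-1}\epsilon$ compensates. (A trivial bookkeeping remark: with $n\in\Nset$ starting at $0$, $\sum_n 2^{-n}\epsilon=2\epsilon$, not $\epsilon$; harmless.) The paper's trick is slightly slicker, but it tacitly assumes the $\mc U_j$ contain no sets of diameter $0$, which your version avoids.

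For (iii)$\Rightarrow$(i) you are, if anything, more careful than the paper. The paper simply asserts ``We may assume $h$ be concave and $h(r)\ge\sqrt r$''; your explicit route --- add $\sqrt r$, then pass to the concave majorant --- supplies the missing justification, and you correctly observe that it is $h$ (being concave, hence doubling) that licenses Lemma~\ref{howroyd}, whereas the paper's sentence ``since $g$ is doubling, Lemma~\ref{howroyd} applies'' points at the wrong gauge.
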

\begin{proof}
(i)\Implies(ii):
Suppose $X$ is \smz{} and let $Y$ be compact.
Let $\del>0$. Since $\hm^h(Y)=0$, for each $j\in\Nset$
there is a finite cover $\mc U_j$ of $Y$ such that
$\sum_{U\in\mc U_j}h(\diam U)<2^{-j-1}\del$.
We may also assume that $\diam U<\del$ for all $U\in\mc U_j$.

Let $\eps_j=\min\{\diam U:U\in\mc U_j\}$. Since $X$ is \smz,
there is a cover $\{V_j\}$ of $X$ such that $\diam V_j\leq\eps_j$. Define
$$
  \mc W=\{V_j\times U:j\in\Nset,\,U\in\mc U_j\}.
$$
$\mc W$ is obviously a cover of $X\times Y$. The choice of $\eps_j$ yields
$\diam(V_j\times U)=\diam U$ for all $j$ and $U\in\mc U_j$. Therefore
$$
  \sum_{W\in\mc W}h(\diam W)=
  \sum_{j\in\Nset}\sum_{U\in\mc U_j}h(\diam U)<
  \sum_{j\in\Nset}2^{-j-1}\del=\del.
$$
It follows that $\hm_\del^h(X\times Y)<\del$, and
$\hm^h(X\times Y)=0$ obtains by letting $\del\to0$.

(ii)\Implies(iii)\Implies(iv) is trivial.

(iv)\Implies(i):
Suppose $X$ is not \smz{}. We will show that $\hm^1(X\times E)>0$
for some $E\in\EE$.
By assumption and Theorem~\ref{basicHnull} there is a gauge $h$ such that $\hm^h(X)>0$.
We may assume $h$ be concave and $h(r)\geq\sqrt r$.
In particular, by concavity of $h$ the function $g(r)=r/h(r)$ is increasing.
Moreover, $h(r)\geq\sqrt r$ yields $\lim_{r\to0}g(r)=0$, i.e., $g$ is a gauge, and
$g\prec1$. Further, $g(2r)=2r/h(2r)\leq2r/h(r)=2g(r)$, i.e., $g$ is doubling.

Use Lemma~\ref{EC}(ii) to find $I\in[\Nset]^\Nset$ such that $\hm^g(C_I)>0$
and let $E=C_I$. By Lemma~\ref{EC}(i), $E\in\EE$.
Since $g$ is doubling, Lemma~\ref{howroyd} applies:
\begin{equation*}
  \hm^1(X\times C_I)=\hm^{h\cdot g}(X\times C_I)\geq\hm^h(X)\cdot\hm^g(C_I)>0.
  \qedhere
\end{equation*}
\end{proof}
\begin{coro}\label{hdimx}
If $X$ is \smz{}, then $\hdim X\times Y=\hdim Y$ for every compact metric space $Y$.
\end{coro}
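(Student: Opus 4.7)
The plan is to establish the two inequalities $\hdim(X \times Y) \geq \hdim Y$ and $\hdim(X \times Y) \leq \hdim Y$ separately, since both require essentially different arguments despite the setting being symmetric.

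For the easy direction $\hdim(X \times Y) \geq \hdim Y$, I would fix any $x_0 \in X$ and observe that under the maximum metric \eqref{maxmetric} the slice $\{x_0\} \times Y$ is an isometric copy of $Y$ inside $X \times Y$. Since Hausdorff dimension is monotone under inclusion and preserved under isometries, $\hdim(X \times Y) \geq \hdim(\{x_0\} \times Y) = \hdim Y$. (Alternatively one can observe that the projection $X \times Y \to Y$ is $1$-Lipschitz and surjective, and Lipschitz maps do not increase Hausdorff dimension.)

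For the nontrivial direction $\hdim(X \times Y) \leq \hdim Y$, I would appeal directly to Theorem \ref{prodHnull}. Let $s > \hdim Y$ be arbitrary. By the definition of Hausdorff dimension, $\hm^s(Y) = 0$. Applying the implication (i) $\Rightarrow$ (ii) of Theorem \ref{prodHnull} with the gauge $h(r) = r^s$, and using that $X$ is \smz{} and $Y$ is compact with $\hm^h(Y)=0$, we obtain $\hm^s(X \times Y) = \hm^h(X \times Y) = 0$. Therefore $\hdim(X \times Y) \leq s$, and taking the infimum over all $s > \hdim Y$ yields the desired inequality.

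There is no serious obstacle: all the technical work has already been absorbed into Theorem \ref{prodHnull} (specifically, into the product inequality of Lemma \ref{howroyd} used to prove it). The corollary is essentially a translation of the measure-theoretic statement into the language of Hausdorff dimension, via the standard fact that $\hdim E = \inf\{s > 0 : \hm^s(E) = 0\}$.
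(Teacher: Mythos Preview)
Your argument is correct and is exactly the intended deduction: the paper states this as an immediate corollary of Theorem~\ref{prodHnull} without writing out a proof, and your two-inequality argument (the upper bound via (i)$\Rightarrow$(ii) of Theorem~\ref{prodHnull} with $h(r)=r^s$, the lower bound via the $1$-Lipschitz projection onto $Y$) is precisely what makes it a corollary.
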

Note that this consistently fails when we drop the assumption that $Y$ is compact:
By a classical example (cf.~\cite[534P]{fremlin5}), if $\cov(\MM)=\co$,
then there is a \smz{} set $X\subs\Rset$ such that $X+X=\Rset$.
Since $X+X$ is a Lipschitz image of $X\times X$, we have
$\hdim X\times X\geq\hdim X+X=1$, while $X$ is \smz{} and $\hdim X=0$.

\section{Sharp measure zero}

Recall that a set $S$ in a Polish group is called \emph{meager-additive}
if $S\cdot M$ is meager for every meager set $M$. This is obviously
a strengthening of the ``algebraic'' characterization of \smz{}
in the Galvin-Mycielski-Solovay Theorem. Meager-additive sets, in
particular in $\Cset$, have received a lot of attention, e.g.,
in \cite{MR1324470,MR1350295,MR1610427,MR2545840}.

Very recently it was shown that meager-additive sets are characterized by a
combinatorial condition very similar to the definition of \smz{} and
also in terms of Hausdorff measures. In this section we will have a look at
these descriptions.

\begin{defn}
A set $S\subs X$ in a complete metric space $X$ has \emph{sharp measure zero}
if for every gauge $h$ there is a \si compact set $K\sups S$ such that
$\hm^h(K)=0$.
\end{defn}

We first work towards an intrinsic definition equivalent to the one above.
The following variation of Hausdorff measure seems to be the right notion for that.
Let $h$ be a gauge. For each $\del>0$ define
$$
  \uhm^h_\delta(E)=
  \inf\left\{\sum_{n=0}^N h(\diam E_n):
  \text{$\{E_n:n\leq N\}$ is a \emph{finite} $\delta$-fine cover of $E$}\right\}
$$
and let
$$
  \uhm_0^h(E)=\sup_{\delta>0}\uhm^h_\delta(E).
$$
Note the striking similarity with the Hausdorff measure.
The only difference is that only finite covers are taken into account.
It is easy to check that $\uhm_0^h$ is finitely subadditive. However,
it is not a measure, since it may (due to the finite covers) lack \si additivity.
To turn it into a measure we need to apply the operation
known as Munroe's \emph{Method I construction} (cf.~\cite{MR0053186} or \cite{MR0281862}):
$$
  \uhm^h(E)=\inf\Bigl\{\sum_{n\in\Nset}\uhm^h_0(E_n):
  E\subs\bigcup_{n\in\Nset}E_n\Bigr\}.
$$
Thus the defined set function $\uhm^h$ is indeed an outer measure whose restriction to
Borel sets is a Borel measure.
We will called it \emph{$h$-dimensional upper Hausdorff measure}.

Upper Hausdorff measures behave much like Hausdorff measures.
We list some important properties of upper Hausdorff measures.
We refer to \cite{Zin_M-add} for details.

Denote $\NNs(\uhm_0^h)$ the smallest \si additive ideal that contains
all sets $E$ with $\uhm_0^h(E)=0$. Note that while $E\in\NNs(\uhm_0^h)$
\Implies $\uhm^h(E)=0$, the reverse implication in general fails.
Write $E_n\upto E$ to denote that $\seq{E_n}$ is an increasing sequence
of sets with union $E$.  The following lists some basic features
of $\uhm^h$ and $\uhm_0^h$.

\begin{lem}[{\cite{Zin_M-add}}]\label{lem1}
Let $h$ be a gauge and $E$ a set in a metric space.
\begin{enum}
\item If $\uhm_0^h(E)<\infty$, then $E$ is totally bounded.
\item $\uhm_0^h(E)=\uhm_0^h(\clos E)$.
\item $\uhm_0^h(E)=\hm^h(E)$ if $E$ is compact.
\item If $E\in\NNs(\uhm_0^h)$, then $\uhm^h(E)=0$.
\item If $X$ is complete, $E\subs X$ and $E\in\NNs(\uhm_0^h)$, then there is a \si compact set
$K\sups E$ such that $\hm^h(K)=0$.
\item If $X$ is complete and $E\subs X$, then
$\uhm^h(E)=\inf\{\hm^h(K):\text{$K\sups E$ is \si compact}\}$.
\item In particular $\uhm^h(E)=\hm^h(E)$ if $E$ is \si compact.
\item If $g\prec h$ and $\uhm^g(E)<\infty$, then $E\in\NNs(\uhm_0^h)$; in particular,
$\uhm^h(E)=0$.
\end{enum}
\end{lem}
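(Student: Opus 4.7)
The plan is to exploit a single unifying principle throughout: by the very definition of $\uhm^h_0$, a set $E$ with $\uhm^h_0(E) < \infty$ is totally bounded, so in a complete ambient space its closure is compact; and on compact sets finite and countable covers achieve the same infima, making $\uhm^h_0$ coincide with $\hm^h$. Items (i)--(iii) codify this observation, and then (iv)--(vii) follow essentially mechanically by passing to closures of the sets appearing in Method~I decompositions. Only (viii) requires separate work.

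More concretely: (i) is immediate from the definition of total boundedness. For (ii), replace each member of a finite $\delta$-fine cover of $E$ by its closure, preserving diameters and covering $\clos{E}$; monotonicity gives the reverse inequality. For (iii), $\hm^h_\delta \leq \uhm^h_\delta$ is free (finite covers form a subclass of countable covers); on compact $E$, use right-continuity of $h$ to enlarge a countable $\delta'$-fine cover to an open one of only slightly larger diameter, then extract a finite subcover by compactness. Item (iv) is direct from Method~I. For (v), decompose $E = \bigcup E_n$ with $\uhm^h_0(E_n) = 0$; by (i) each $E_n$ is totally bounded, completeness makes $K_n = \clos{E_n}$ compact, and by (ii)--(iii) $\hm^h(K_n) = \uhm^h_0(E_n) = 0$; take $K = \bigcup K_n$. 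For (vi), the upper bound on the right-hand side uses this same passage to closures applied to a near-optimal Method~I decomposition of $E$, while the lower bound invokes (vii). For (vii), the non-trivial direction $\uhm^h(E) \leq \hm^h(E)$ on $\sigma$-compact $E$ is first established on compacts (immediate from (iii) combined with $\hm^h \leq \uhm^h \leq \uhm^h_0$) and then extended via a disjointification together with $\sigma$-additivity of $\hm^h$ on disjoint Borel sets.

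The main obstacle is (viii). Reduce via a Method~I decomposition to showing that any $F$ with $\uhm^g_0(F) \leq M < \infty$ satisfies $\uhm^h_0(F) = 0$. Fix such an $F$; for each $\delta > 0$ take a finite $\delta$-fine cover $\{F_i\}$ of $F$ with $\sum g(\diam F_i) \leq M + 1$. Setting $c(\delta) := \sup_{0 < r \leq \delta} h(r)/g(r)$, we obtain $\sum h(\diam F_i) \leq c(\delta)(M+1)$, hence $\uhm^h_\delta(F) \leq c(\delta)(M+1)$ for every $\delta > 0$. The subtle point is that $\uhm^h_\delta$ is \emph{non-increasing} in $\delta$: as $\delta$ shrinks, $\uhm^h_\delta(F)$ can only grow. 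But since $c(\delta) \to 0$ as $\delta \to 0^+$ by the very definition of $g \prec h$, for $\delta_1 < \delta_2$ we get $\uhm^h_{\delta_2}(F) \leq \uhm^h_{\delta_1}(F) \leq c(\delta_1)(M+1)$, and letting $\delta_1 \to 0^+$ forces $\uhm^h_{\delta_2}(F) = 0$ for every $\delta_2 > 0$. Therefore $\uhm^h_0(F) = \sup_{\delta > 0} \uhm^h_\delta(F) = 0$, as required.
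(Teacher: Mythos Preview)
The paper does not include a proof of this lemma; it is stated with a citation to \cite{Zin_M-add} and the surrounding text says ``We refer to \cite{Zin_M-add} for details.'' So there is nothing to compare against here.

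Your proposal is correct. The unifying principle you isolate---that $\uhm^h_0$-finiteness forces total boundedness, so closures are compact in a complete space and (iii) transfers $\uhm^h_0$ to $\hm^h$---is exactly the right engine, and your argument for (viii) handles the monotonicity subtlety cleanly.

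One small imprecision in (vii): you write that the extension from compacts to $\sigma$-compacts goes ``via a disjointification together with $\sigma$-additivity of $\hm^h$.'' The ingredient you actually need is continuity from below of $\uhm^h$, not of $\hm^h$. The paper asserts (without proof) that $\uhm^h$ restricted to Borel sets is a Borel measure; granting this, write a $\sigma$-compact $E$ as an increasing union $K_n\upto E$ of compacts and compute
\[
  \uhm^h(E)=\lim_n\uhm^h(K_n)\leq\lim_n\uhm^h_0(K_n)=\lim_n\hm^h(K_n)=\hm^h(E),
\]
the last equality by continuity from below for $\hm^h$. Disjointifying and invoking $\sigma$-additivity of $\hm^h$ alone does not work, because the disjoint pieces $K_n\setminus K_{n-1}$ need not be compact, so (iii) does not apply to them. (That $\uhm^h$ is a metric outer measure, hence Borel-regular, follows easily from the fact that $\uhm^h_0$ is additive on positively separated sets; you may want to note this.)
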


\begin{lem}[{\cite{Zin_M-add}}]\label{gammagr}
$E\in\NNs(\uhm_0^h)$ if and only if $E$ has a $\gamma$-groupable
cover $\seq{U_n}$ such that $\sum_{n\in\Nset}h(\diam U_n)<\infty$.
\end{lem}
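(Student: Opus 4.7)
The plan is to unwind both definitions directly and build the required structure by hand in each direction. Recall that a cover $\{U_n:n\in\Nset\}$ is \emph{$\gamma$-groupable} if it admits a partition $\{\mc H_k:k\in\Nset\}$ into finite subfamilies such that every $x\in E$ belongs to $\bigcup\mc H_k$ for all but finitely many $k$. The key observation to keep in mind throughout is that $\sum_n h(\diam U_n)<\infty$ forces $h(\diam U_n)\to 0$, and since $h$ is a gauge with $h(r)>0$ for $r>0$, this in turn forces $\diam U_n\to 0$.

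\textbf{Forward direction.} Suppose $E\in\NNs(\uhm_0^h)$. Write $E=\bigcup_{n\in\Nset}E_n$ with $\uhm_0^h(E_n)=0$ for every $n$. For each $n,k$ apply the definition of $\uhm_0^h$ with $\del=2^{-k}$ and tolerance $2^{-n-k}$ to obtain a \emph{finite} $2^{-k}$-fine cover $\mc F_{n,k}$ of $E_n$ satisfying $\sum_{U\in\mc F_{n,k}}h(\diam U)<2^{-n-k}$. Define the groups $\mc H_k=\bigcup_{n\leq k}\mc F_{n,k}$ (each finite) and let $\seq{U_n}$ be any enumeration compatible with this partition. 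Every $x\in E$ lies in some $E_n$; since $\mc F_{n,k}$ covers $E_n$ for every $k\geq n$, we get $x\in\bigcup\mc H_k$ for all $k\geq n$, so the cover is $\gamma$-groupable. Summability follows from
$$
  \sum_k\sum_{U\in\mc H_k}h(\diam U)
  \leq \sum_k\sum_{n\leq k}2^{-n-k}
  \leq \sum_k 2^{-k+1}<\infty.
$$

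\textbf{Reverse direction.} Suppose $\seq{U_n}$ is a $\gamma$-groupable cover with $\sum_n h(\diam U_n)<\infty$, witnessed by the partition $\{\mc H_k:k\in\Nset\}$. Set
$$
  E_m=\bigl\{x\in E:x\in{\textstyle\bigcup}\mc H_k\text{ for all }k\geq m\bigr\},
$$
so $E=\bigcup_m E_m$. Fix $m$; I claim $\uhm_0^h(E_m)=0$. Given $\del>0$ and $\eps>0$, note two things. First, because the $\mc H_k$ are finite and partition an $\omega$-indexed set, $\min\{n:U_n\in\mc H_k\}\to\infty$, and combined with $\diam U_n\to 0$ this yields $\max\{\diam U:U\in\mc H_k\}\to 0$. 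Second, since $\sum_k\sum_{U\in\mc H_k}h(\diam U)=\sum_n h(\diam U_n)<\infty$, the per-group sums $\sum_{U\in\mc H_k}h(\diam U)$ tend to $0$. Pick $K\geq m$ with $\max\{\diam U:U\in\mc H_K\}<\del$ and $\sum_{U\in\mc H_K}h(\diam U)<\eps$. Then $\mc H_K$ is a finite $\del$-fine cover of $E_m$ of $h$-weight $<\eps$, so $\uhm_\del^h(E_m)=0$ for every $\del>0$, i.e., $\uhm_0^h(E_m)=0$. Hence $E\in\NNs(\uhm_0^h)$.

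\textbf{Expected obstacle.} The only delicate point is the passage from ``partition into finite groups plus a convergent series'' to a \emph{uniform} control of diameters inside each group $\mc H_k$ for large $k$. This is what allows one to use a single $\mc H_K$ as a $\del$-fine cover, and it is precisely the place where the hypothesis that $h$ is a gauge (so $h(\diam U_n)\to 0$ forces $\diam U_n\to 0$) together with the fact that the $\mc H_k$'s partition $\omega$ into finite sets does the work. No rearrangement of the cover is needed because the $\gamma$-groupable structure is already compatible with both constraints.
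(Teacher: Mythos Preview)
Your proof is correct and follows essentially the same approach as the paper. The only organizational difference is in the forward direction: the paper first replaces the decomposition by an increasing one $E_n\upto E$ (using finite subadditivity of $\uhm_0^h$) and then needs only a single finite cover $\mc G_n$ of each $E_n$ with $\sum_{G\in\mc G_n}h(\diam G)<2^{-n}$, whereas you keep the $E_n$ arbitrary and compensate with the triangular grouping $\mc H_k=\bigcup_{n\leq k}\mc F_{n,k}$. Both are routine. In the reverse direction your argument is the paper's argument with the $\delta$-fineness step made explicit; note that you could shortcut slightly: once $\sum_{U\in\mc H_K}h(\diam U)<\min(\eps,h(\delta))$, every term is below $h(\delta)$ and monotonicity of $h$ already forces $\diam U<\delta$, so the separate verification that $\max\{\diam U:U\in\mc H_k\}\to0$ is not needed.
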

\begin{proof}
Suppose that $E\in\NNs(\uhm_0^h)$. Let $E_n\upto E$ be such that $\uhm^h_0(E_n)=0$.
For each $n$ let $\mc G_n$ be a finite cover of $E_n$ such that
$\sum_{G\in\mc G_n}h(\diam G)<2^{-n}$. Since the family $\mc G=\bigcup_n\mc G_n$
is obviously a $\gamma$-groupable cover, we are done.

In the opposite direction, suppose that $\seq{U_n}$ is a $\gamma$-groupable
cover $\seq{U_n}$ such that $\sum_{n\in\Nset}h(\diam U_n)<\infty$
with witnessing families $\mc G_j$.
Let $E_k=\bigcap_{j\geq k}\bigcup\mc G_j$. Then $E=\bigcup_{k\in\Nset}E_k$.
For each $k$, the set $E_k$ is covered
by each $\mc G_j$, $j\geq k$, and $\sum_{G\in\mc G_j}h(\diam G)$ is as
small as needed for $j$ large enough. Hence $\uhm^h_0(E_k)=0$
and consequently $E\in\NNs(\uhm_0^h)$.
\end{proof}

It is straightforward from Lemma~\ref{lem1} that
the following intrinsic definition of sharp measure zero is consistent with the one above.

\begin{defn}
A metric space $X$ has sharp measure zero if $\uhm^h(X)=0$ for every gauge $h$.
Sharp measure zero is abbreviated as \ssmz{}.
\end{defn}

It is no surprise that Theorem~\ref{basicHnull} has a counterpart for \ssmz,
with basically the same proof. Upper Hausdorff dimension is defined as expected:
$$
  \uhdim X=\sup\{s>0:\uhm^s(X)=\infty\}=\inf\{s>0:\uhm^s(X)=0\},
$$
see~\cite{Zin_M-add,MR2957686} for more on the Hausdorff dimension.

\begin{thm}[\cite{Zin_M-add}]\label{basicUhnull}
Let $X$ be a metric space. The following are equivalent.
\begin{enum}
\item $X$ is \ssmz{},
\item $\uhdim f(X)=0$ for each uniformly continuous mapping $f$ on $X$,
\item $\uhdim(X,\rho)=0$ for each uniformly equivalent metric $\rho$ on $X$.
\end{enum}
\end{thm}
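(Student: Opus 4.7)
The plan is to mirror the proof of Corollary \ref{basicHnull} almost verbatim, replacing $\hm$ with $\uhm$ and $\hdim$ with $\uhdim$ throughout. Everything goes through once one has the upper-Hausdorff analog of Lemma \ref{lipschitz}: if $f:(X,d_X)\to(Y,d_Y)$ is uniformly continuous with modulus $g$, then $\uhm^h(f(X))\leq\uhm^{h\circ g}(X)$ for every gauge $h$. I would first verify this auxiliary lemma, since it is not stated explicitly in the excerpt.

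For the estimate, if $\{E_n:n\leq N\}$ is a finite $\delta$-fine cover of $X$, then $\{f(E_n):n\leq N\}$ is a finite $g(\delta)$-fine cover of $f(X)$, and $\sum_n h(\diam f(E_n))\leq\sum_n(h\circ g)(\diam E_n)$. Taking infima over such covers and letting $\delta\to 0$ yields $\uhm^h_0(f(X))\leq\uhm^{h\circ g}_0(X)$. The Method I passage is then formal: for any countable cover $\{A_n\}$ of $X$, $\{f(A_n)\}$ covers $f(X)$ and $\uhm^h_0(f(A_n))\leq\uhm^{h\circ g}_0(A_n)$; infimizing over covers gives $\uhm^h(f(X))\leq\uhm^{h\circ g}(X)$.

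With the lemma in hand, (i)$\Implies$(ii) follows by fixing $s>0$ and $f$ uniformly continuous with modulus $g$, setting $h(r)=g(r)^s$, and computing $\uhm^s(f(X))\leq\uhm^h(X)=0$; since $s$ is arbitrary, $\uhdim f(X)=0$. Implication (ii)$\Implies$(iii) is immediate: $\id_X:(X,d)\to(X,\rho)$ is uniformly continuous whenever $\rho$ is uniformly equivalent to $d$. For (iii)$\Implies$(i), given a gauge $h$ I would pick an increasing, concave (hence subadditive) gauge $g$ with $g\prec h$; then $\rho=g\circ d$ is a uniformly equivalent metric on $X$, and $\id_X:(X,\rho)\to(X,d)$ is uniformly continuous with modulus $g^{-1}$. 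The lemma gives $\uhm^h(X,d)\leq\uhm^{h\circ g^{-1}}(X,\rho)$, and since $g\prec h$ means $(h\circ g^{-1})(r)\leq r$ for all sufficiently small $r$, we have $\uhm^{h\circ g^{-1}}(X,\rho)\leq\uhm^1(X,\rho)$, which equals $0$ by (iii).

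The only genuine obstacle is the upper-Hausdorff Lipschitz lemma: one must check that the finiteness constraint built into $\uhm^h_0$ is preserved under pushforward (it is, since images of finite covers are finite) and that Method I commutes with the pushforward (it does, via the countable-cover argument above). Once this lemma is in place, the rest is a line-by-line transcription of the proof of Corollary \ref{basicHnull}.
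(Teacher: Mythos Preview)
Your proposal is correct and matches the paper's approach exactly: the paper does not spell out a proof of Theorem~\ref{basicUhnull} but simply remarks that it ``has basically the same proof'' as Corollary~\ref{basicHnull}, which is precisely what you carry out. The one ingredient you rightly flag as needing verification---the upper-Hausdorff analogue of Lemma~\ref{lipschitz}---is handled correctly via the finite-cover and Method~I arguments you give.
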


It is straightforward from the definition and Theorem~\ref{basicUhnull}
that \ssmz{} is a \si additive property and that it is
preserved by uniformly continuous mappings.

Sharp measure zero can be described in terms of covers. The description is strikingly
similar to the Borel's definition of strong measure zero.

A countable cover $\{U_j\}$ of $X$ is a called
a \emph{$\gamma$-cover} if each $x\in X$ belongs to all but finitely
many $U_j$.

The following notion was studied, e.g., in \cite{MR2029229}.
A sequence $\sseq{W_n}$ of sets in $X$ is called a \emph{$\gamma$-groupable cover}
if there is a partition $\Nset=I_0\cup I_1\cup I_2\cup\dots$
into consecutive finite intervals (i.e.~$I_{j+1}$ is on the right of $I_j$ for all $j$)
such that the sequence $\sseq{\bigcup_{n\in I_j}W_n:j\in\Nset}$ is a $\gamma$-cover.
The partition $\sseq{I_j}$ will be occasionally called a \emph{witnessing partition}
and the finite families $\{U_n:n\in I_j\}$
will be occasionally called \emph{witnessing families}.

\begin{thm}[\cite{Zin_M-add}]\label{besic2}
A metric space $X$ is \ssmz{} if and only if it has the following property:
for every sequence $\seq{\eps_n}$
of positive real numbers there is a $\gamma$-groupable cover $\{U_n:n\in\Nset\}$
of $X$ such that $\diam U_n \leq \eps_n$ for all $n$.
\end{thm}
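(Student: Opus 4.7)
The plan is to reduce both implications to Lemma~\ref{gammagr}, which characterizes membership in $\NNs(\uhm_0^h)$ via $\gamma$-groupable covers of finite $h$-weight. For $(\Leftarrow)$, fix any gauge $h$. Since $h(r)\to 0$ as $r\to 0$, I can pick positive reals $\eps_n$ with $h(\eps_n)\leq 2^{-n}$, so a $\gamma$-groupable cover $\{U_n\}$ with $\diam U_n\leq\eps_n$ supplied by the hypothesis satisfies $\sum_n h(\diam U_n)\leq\sum_n h(\eps_n)<\infty$ by monotonicity of $h$. Lemma~\ref{gammagr} then places $X$ in $\NNs(\uhm_0^h)$, and Lemma~\ref{lem1}(iv) upgrades this to $\uhm^h(X)=0$. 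Since $h$ was arbitrary, $X$ is sharp measure zero.

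For $(\Rightarrow)$, suppose $X$ is sharp measure zero and let $\seqeps$ be given. Replacing $\eps_n$ by $\min_{k\leq n}\min(\eps_k,2^{-n})$ I may assume $\eps_n$ is non-increasing and tends to $0$, and I then select a strictly increasing right-continuous gauge $h$ with $h(\eps_n)\geq 2/(n+1)$ (possible because $\eps_n\to 0$). Since $\uhm^g(X)=0$ for any $g\prec h$, Lemma~\ref{lem1}(viii) yields $X\in\NNs(\uhm_0^h)$, and Lemma~\ref{gammagr} delivers a $\gamma$-groupable cover $\{V_n\}$ of $X$ with $\sum_n h(\diam V_n)<1$, witnessing families $\mc{G}_j=\{V_n:n\in I_j\}$, and block weights $a_j=\sum_{n\in I_j}h(\diam V_n)$ satisfying $\sum_j a_j<1$.

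The heart of the proof, and the step I expect to be most delicate, is converting $\{V_n\}$ into a cover $\{U_n\}$ with $\diam U_n\leq\eps_n$ while preserving $\gamma$-groupability. Globally sorting the $V_n$ by decreasing diameter matches diameters to $\eps$'s via the rank inequality $i\cdot h(D_i)\leq 1$, but scatters each $\mc{G}_j$ across non-consecutive positions, so I plan to keep the blocks intact: fix a bijection $\sigma$ of $\Nset$ that sends blocks of larger $h$-mass $a_j$ to earlier indices, partition $\Nset$ into consecutive output intervals $J_k=[M_k,M_{k+1})$, and inside each $J_k$ assign the $i$-th largest diameter in $\mc{G}_{\sigma(k)}$ to the position carrying the $i$-th largest $\eps$, padding the leftover positions with singletons (which have diameter $0$). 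The within-block sort estimate $i\cdot h(D^{(\sigma(k))}_i)\leq a_{\sigma(k)}$, together with the lower bound $h(\eps_{M_k+i-1})\geq 2/(M_k+i)$ and the sorted-sum estimate $a_{\sigma(k)}\leq 1/(k+1)$, reduces $\diam U_n\leq\eps_n$ to an arithmetic relation between $a_{\sigma(k)}$, $M_k$, and $|\mc{G}_{\sigma(k)}|$; the core technical task is to tune $h$, $\sigma$, and the pad-lengths $|J_k|-|\mc{G}_{\sigma(k)}|$ jointly so that this relation holds for every $k$ and every $i\leq|\mc{G}_{\sigma(k)}|$. Once the assignment is in place, $\gamma$-groupability of $\{U_n\}$ with witnessing partition $\{J_k\}$ is automatic: each $\bigcup_{n\in J_k}U_n$ contains $\mc{G}_{\sigma(k)}$, and for every bijection $\sigma$ of $\Nset$ we have $\sigma^{-1}(j)\to\infty$ as $j\to\infty$, so the cofinite-cover property of the original $\gamma$-cover transfers to the new indexing.
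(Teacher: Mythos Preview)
Your $(\Leftarrow)$ direction is correct and essentially identical to the paper's.

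For $(\Rightarrow)$, the setup --- choose a strictly increasing gauge $h$ with $h(\eps_n)\geq 2/(n+1)$, pick $g\prec h$, use $\uhm^g(X)=0$ and Lemma~\ref{lem1}(viii) to land in $\NNs(\uhm_0^h)$, then Lemma~\ref{gammagr} for a $\gamma$-groupable cover $\{V_n\}$ of finite $h$-weight --- also matches the paper (with the names $g,h$ swapped). The gap is in the block-matching step, which you leave as a ``core technical task'' to be tuned; in fact no such tuning can succeed. You have no control over the block sizes $|\mc G_j|$: the cover is produced by Lemma~\ref{gammagr} \emph{after} $h$ is fixed, and the lemma gives no bound on how many sets a witnessing family contains. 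At $i=1$ your within-block estimate yields only $h\bigl(D_1^{(\sigma(k))}\bigr)\leq a_{\sigma(k)}\leq 1/(k+1)$, while for the target you only know $h(\eps_{M_k})\geq 2/(M_k+1)$; chaining these would force $M_k\leq 2k+1$, but $M_k\geq\sum_{l<k}|\mc G_{\sigma(l)}|$ can be arbitrarily large. Padding only increases $M_k$; reordering via $\sigma$ does not shrink it; and $h$ is chosen before the blocks exist --- so none of your three tuning knobs helps.

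The paper's fix is to abandon the attempt to use every block. After passing from $\diam V_n$ to slightly larger $\del_n>\diam V_n$ (so that all $\del_n>0$ and still $\sum_n h(\del_n)<\infty$), one selects a subsequence $j_0<j_1<\cdots$ of block indices satisfying both $\sum_{n\in I_{j_k}}h(\del_n)<2^{-k-1}$ and the separation condition $\max\{\del_n:n\in I_{j_{k+1}}\}<\min\{\del_n:n\in I_{j_k}\}$. The separation condition makes the within-block sort coincide with the \emph{global} sort along $I=\bigcup_k I_{j_k}$, so the blocks stay consecutive while the $\del$'s become globally non-increasing. Reindexing $H_n=V_{\widehat n}$ with $\widehat n$ the $n$-th element of $I$, the global rank inequality gives
\[
  h(\del_{\widehat n})\leq\frac{1}{n}\sum_{m\in I}h(\del_m)<\frac1n<h(\eps_n)
\]
directly, with no interplay between block position and global position. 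A subsequence of witnessing families still witnesses $\gamma$-groupability, so this is the missing idea that completes the argument.
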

\begin{proof}
The pattern of the proof is the same as that of the proof of the Besicovitch's
theorem~\ref{besic}, but there are details that make it much more involved.

The forward implication is easy.
Let $h$ be a gauge. Pick $\eps_n>0$ such that
$\sum_nh(\eps_n)<\infty$. By assumption, there is a
$\gamma$-groupable cover $\sseq{G_n}$ such that $\diam G_n\leq\eps_n$.
Therefore $\sum_nh(\diam G_n)\leq\sum_nh(\eps_n)<\infty$.
Now apply Lemma~\ref{gammagr} to conclude that $X\in\NNs(\uhm_0^h)$
and in particular $\uhm^h(X)=0$.

The reverse implication:
Let $\sseq{\eps_n}\in(0,\infty)^\Nset$.
Choose a gauge $g$ such that
$g(\eps_n)>\frac1n$ for all $n\geq1$ and then a gauge $h\prec g$.
Since $\uhm^h(X)=0$, Lemma~\ref{lem1}(viii) yields $X\in\NNs(\uhm_0^g)$,
which in turn yields, with the aid of Lemma~\ref{gammagr}, a
$\gamma$-groupable cover $\seq{G_n}$ such that $\sum_ng(\diam G_n)<\infty$.
Let $\{I_j:j\in\Nset\}$ be the witnessing partition and $\mc G_j=\{G_n:n\in I_j\}$
the witnessing families.

We want to permute the cover so that diameters decrease.
Some of the diameters may be $0$.
Also, the permutation may break down the witnessing families.
We thus have to exercise some care.

For each $n$ choose $\del_n>\diam G_n$ so that $\sum_ng(\del_n)<\infty$.
Next choose an increasing sequence $\seq{j_k}$ satisfying
for all $k\in\Nset$
\begin{enumerate}
\item[(a)] $\sum\{g(\del_n):n\in I_{j_k}\}<2^{-k-1}$,
\item[(b)] $\max\{\del_n:n\in I_{j_{k+1}}\}<\min\{\del_n:n\in I_{j_{k}}\}$.
\end{enumerate}
Let $I=\bigcup_{k\in\Nset}I_{j_k}$.
Rearrange $G_n$'s within each group $\mc G_{j_k}$ so that $\sseq{\del_n:n\in I_{j_k}}$
form a non-increasing sequence. Together with (b) this ensures that
the sequence $\sseq{\del_n:n\in I}$ is non-increasing.

For each $n\in\Nset$ let $\widehat n\in I$ be the unique index such that
$n=\abs{I\cap \widehat n}$ and define $H_n=G_{\widehat n}$.
It follows, with the aid of (a)
and the definition of $g$, that for all $n\in\Nset$
\begin{align*}
  g(\diam H_n)=g(\diam G_{\widehat n})&\leq g(\del_{\widehat n})
  \leq\frac{1}{n}\sum\{g(\del_m):m\in I,m\leq \widehat n\}\\
  &\leq\frac{1}{n}\sum\{g(\del_m):m\in I\}
  \leq\frac{1}{n}<g(\eps_n)
\end{align*}
and thus $\diam H_n\leq\eps_n$ for all $n$.
Moreover, the families $\mc G_{j_k}$, $k\in\Nset$, witness that $\seq{H_n}$
is a $\gamma$-groupable cover.
\end{proof}

\begin{thm}[\cite{Zin_M-add}]\label{prodUhnull}
Let $X$ be a metric space.
The following are equivalent.
\begin{enum}
\item $X$ is \ssmz,
\item $\uhm^h(X\times Y)=0$ for each gauge $h$ and $Y$ such that $\uhm_0^h(Y)=0$,
\item $\uhm^1(X\times E)=0$ for each $E\in\EE$.
\end{enum}
\end{thm}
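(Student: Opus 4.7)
The plan follows the pattern of Theorem~\ref{prodHnull}, with $\uhm$ replacing $\hm$ throughout, and with Lemma~\ref{gammagr} and Theorem~\ref{besic2} supplying the combinatorial ingredients.

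For (i)$\Rightarrow$(ii), I would aim to show $X\times Y\in\NNs(\uhm_0^h)$, which yields $\uhm^h(X\times Y)=0$ by Lemma~\ref{lem1}(iv). Concretely, since $\uhm_0^h(Y)=0$, for each $k$ there is a finite $2^{-k}$-fine cover $\mc U_k$ of $Y$ with $\sum_{U\in\mc U_k}h(\diam U)<2^{-k}$; after a small perturbation (replacing singletons by tiny balls, using right-continuity of $h$, and taking successive minima) I may assume $\alpha_k:=\min_{U\in\mc U_k}\diam U$ is strictly decreasing and positive. Define a gauge $g$ with $g(r)=2^{-k}$ on $[\alpha_{k+1},\alpha_k)$. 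Since $X$ is \ssmz, $\uhm^g(X)=0$, so $X\in\NNs(\uhm_0^g)$ and Lemma~\ref{gammagr} supplies a $\gamma$-groupable cover $\{V_n\}$ of $X$, with witnessing partition $\sseq{I_j}$, such that $\sum_n g(\diam V_n)<\infty$. Assign to each $n$ the unique $k(n)$ with $\diam V_n\in[\alpha_{k(n)+1},\alpha_{k(n)})$ and form $\mc W_j=\{V_n\times U:n\in I_j,\ U\in\mc U_{k(n)}\}$. Each $(x,y)\in X\times Y$ lies in $\bigcup\mc W_j$ for every sufficiently large $j$ (pick $n\in I_j$ with $x\in V_n$ and then $U\in\mc U_{k(n)}$ with $y\in U$), so $\mc W:=\bigcup_j\mc W_j$ is $\gamma$-groupable; since $\diam V_n<\alpha_{k(n)}\leq\diam U$ we have $\diam(V_n\times U)=\diam U$, and
\[
  \sum_{W\in\mc W}h(\diam W)\leq\sum_n\sum_{U\in\mc U_{k(n)}}h(\diam U)\leq\sum_n 2^{-k(n)}=\sum_n g(\diam V_n)<\infty,
\]
so $\mc W$ witnesses $X\times Y\in\NNs(\uhm_0^h)$ via Lemma~\ref{gammagr}.

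For (ii)$\Rightarrow$(iii), specialize (ii) to $h(r)=r$. Every $E\in\EE$ is contained in an $F_\sigma$ Haar-null set $F=\bigcup_n K_n$ with each $K_n$ compact and $\hm^1(K_n)=0$; by Lemma~\ref{lem1}(iii), $\uhm_0^1(K_n)=0$, so (ii) gives $\uhm^1(X\times K_n)=0$, and $\sigma$-subadditivity of $\uhm^1$ yields $\uhm^1(X\times E)\leq\sum_n\uhm^1(X\times K_n)=0$.

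For (iii)$\Rightarrow$(i), I argue contrapositively. If $X$ is not \ssmz, pick a gauge $h$ with $\uhm^h(X)>0$; replacing $h$ by its concave majorant plus $r\mapsto\sqrt r$, I may assume $h$ is concave with $h(r)\geq\sqrt r$. Then $g(r)=r/h(r)$ is a doubling gauge with $g\prec 1$, Lemma~\ref{EC}(ii) gives $I$ with $\hm^g(C_I)>0$, and $C_I\in\EE$ by Lemma~\ref{EC}(i). Compactness of $C_I$ and Lemma~\ref{lem1}(vii) yield $\uhm^g(C_I)=\hm^g(C_I)>0$. The target $\uhm^1(X\times C_I)>0$ reduces, via Lemma~\ref{lem1}(vi) (applied inside the completion of $X\times C_I$), to $\hm^1(K)\geq\uhm^h(X)\cdot\hm^g(C_I)$ for every $\sigma$-compact $K\sups X\times C_I$. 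For any such $K=\bigcup K_n$, each slice $K^y$ ($y\in C_I$) is $\sigma$-compact and contains $X$, so Lemma~\ref{lem1}(vi) delivers $\hm^h(K^y)\geq\uhm^h(X)$; a Howroyd-type slicing inequality for the doubling gauge $g$ then produces $\hm^1(K)\geq\uhm^h(X)\cdot\hm^g(C_I)>0$, contradicting (iii).

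The main obstacle is this last product/slicing inequality: Lemma~\ref{howroyd} as stated applies only to products $X\times Y$, whereas here one must control $\hm^1(K)$ for an arbitrary $\sigma$-compact $K$ containing the product, with only slicewise information. The requisite Howroyd-type slicing refinement for a doubling gauge $g$ is standard in geometric measure theory, but must be explicitly invoked or reproved; this is the essential new technical point beyond the proof of Theorem~\ref{prodHnull}. The routine nuisances — the right-continuous extension of the step-function gauge $g$ in (i)$\Rightarrow$(ii) and the elimination of singletons from $\mc U_k$ — can be handled by inspection.
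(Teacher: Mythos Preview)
Your proposal is correct and follows the paper's strategy, but the execution differs in two places worth noting. For (i)$\Rightarrow$(ii), the paper applies Lemma~\ref{gammagr} to $Y$ (since $\uhm_0^h(Y)=0$ trivially yields $Y\in\NNs(\uhm_0^h)$) to obtain a $\gamma$-groupable cover with witnessing families $\mc U_j$, sets $\eps_j=\min\{\del_U:U\in\mc U_j\}$, and then invokes Theorem~\ref{besic2} directly on $X$ to produce a $\gamma$-groupable cover $\seq{V_j}$ with $\diam V_j\leq\eps_j$; the family $\{V_j\times U:U\in\mc U_j\}$ is then the desired $\gamma$-groupable cover of $X\times Y$. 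Your auxiliary-gauge construction accomplishes the same thing but unwinds work already packaged in Theorem~\ref{besic2}. For (iii)$\Rightarrow$(i), the paper does \emph{not} pass through Lemma~\ref{lem1}(vi) and $\sigma$-compact supersets; instead it quotes a mixed product inequality from \cite{Zin_M-add}: for $h$ doubling, $\hm^h(A)\cdot\uhm^g(B)\leq\uhm^{hg}(A\times B)$. Applied with $A=C_I$ and $B=X$ (so that the doubling gauge plays the r\^ole of $h$), this yields $\uhm^1(X\times C_I)\geq\hm^g(C_I)\cdot\uhm^h(X)>0$ immediately. Your slicing route is a legitimate alternative and, in effect, would re-derive that lemma, but it demands a strictly stronger tool---a slicing estimate for arbitrary $\sigma$-compact $K\supseteq X\times C_I$ rather than for products only---so the paper's packaging is the more economical one.
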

\begin{proof}
(i)\Implies(ii):
Suppose $X$ is \ssmz{}. Let $h$ be a gauge and $\uhm_0^h(Y)=0$.
By Lemma~\ref{gammagr} there is a $\gamma$-groupable cover $\mc U$ of $Y$ such that
$\sum_{U\in\mc U}h(\diam U)<\infty$. For each $U\in\mc U$ there is $\del_U>\diam U$
such that $\sum_{U\in\mc U}h(\del_U)<\infty$.
Denote by $\mc U_j$ the witnessing families and
let $\eps_j=\min\{\del_U:U\in\mc U_j\}$.
Since $X$ is \ssmz, Theorem~\ref{besic2} yields a $\gamma$-groupable cover
$\seq{V_j}$ of $X$ such that $\diam V_j\leq\eps_j$.
Denote by $\mc V_k$ the witnessing families.
Define a family of sets in $X\times Y$
$$
  \mc W=\{V_j\times U:j\in\Nset,\,U\in\mc U_j\}.
$$
It is routine to check that $\mc W$ is a $\gamma$-groupable cover of $F\times Y$.
Since $\diam(V_j\times U)\leq\del_U$ for all $j$ and $U\in\mc U_j$ by the choice
of $\eps_j$, we have
$$
  \sum_{W\in\mc W}h(\diam W)\leq
  \sum_{U\in\mc U}h(\del_U)<\infty.
$$
Thus it follows from Lemma~\ref{gammagr} that $X\times Y\in\NNs(\uhm_0^h)$ and
in particular $\uhm^h(X\times Y)=0$.

(ii)\Implies(iii) is trivial.
The proof of (iii)\Implies(i) is very much like that of Theorem~\ref{prodHnull}.
Suppose $X$ is not \ssmz. We need to find $E\in\EE$ such that $\uhm^1(X\times E)>0$.
By assumption there is a gauge $h$ such that $\uhm^h(X)>0$.
We may suppose $h$ is concave, and
find a doubling gauge $g\prec1$ such that $g(r)h(r)=r$.
Then use Lemma~\ref{EC}(ii) to find $I\in[\Nset]^\Nset$ such that $\hm^g(C_I)>0$.
We now need a product inequality on upper Hausdorff measures
analogous to Lemma~\ref{howroyd} proved in~\cite[3.5,7.4]{Zin_M-add}. 
\begin{lems}[\cite{Zin_M-add}]\label{uhowroyd}
Let $X,Y$ be metric spaces and $g$ a gauge and $h$ a doubling gauge. Then
$\hm^h(X)\,\uhm^g(Y)\leq\uhm^{hg}(X\times Y)$.
\end{lems}

\noindent
Using this lemma, we get
\begin{equation*}
  \uhm^1(X\times C_I)=\uhm^{h\cdot g}(X\times C_I)\geq\uhm^h(X)\cdot\hm^g(C_I)>0.
  \qedhere
\end{equation*}
\end{proof}

As we already mentioned, under $\cov(\MM)=\co$ there is an example
(\cite[534P]{fremlin5}) of a \smz{} set $X\subs\Rset$ such that
$X\times X$ is not \smz{}. Scheepers~\cite{MR1779763} examines thoroughly
conditions imposed on a \smz{} set $X$ that would ensure that a product of $X$
with another \smz{} set is \smz.
A recent roofing result claims that if one of the factors is \ssmz{},
then the product is \smz.
\begin{thm}[\cite{Zin_M-add}]\label{productHUH}
\begin{enum}
\item If $X$ and $Y$ are \ssmz{}, then $X\times Y$ is \ssmz{}.
\item If $X$ is \smz{} and $Y$ is \ssmz{}, then $X\times Y$ is \smz{}.
\end{enum}
\end{thm}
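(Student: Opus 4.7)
The plan is to use, for each part, the most suitable of the covering-based characterizations already at hand. For (i), the natural tool is Theorem~\ref{besic2}: a space is \ssmz{} iff it admits, for any prescribed $\seq{\eps_n}$, a $\gamma$-groupable cover with $\diam U_n\leq\eps_n$, and a product of two such covers turns out to be again $\gamma$-groupable. For (ii), I will engulf $Y$ in a $\sigma$-compact set of vanishing $h$-Hausdorff measure via Lemma~\ref{lem1}(vi) and combine this with the $\smz\times\text{(compact)}$ product inequality from Theorem~\ref{prodHnull}.

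For (i), fix $\seq{\eps_n}\in(0,\infty)^\Nset$. Apply Theorem~\ref{besic2} to $Y$ to obtain a $\gamma$-groupable cover $\{U_n:n\in\Nset\}$ of $Y$ with $\diam U_n\leq\eps_n$ and witnessing partition of $\Nset$ into consecutive finite intervals $\{I_k\}_{k\in\Nset}$. Set $a_k=\min\{\eps_n:n\in I_k\}$ and apply Theorem~\ref{besic2} to $X$ to obtain a $\gamma$-groupable cover $\{V_k:k\in\Nset\}$ of $X$ with $\diam V_k\leq a_k$ and witnessing partition $\{K_j\}_{j\in\Nset}$. Equip $X\times Y$ with the maximum metric~\eqref{maxmetric} and define $W_n=V_k\times U_n$ whenever $n\in I_k$; then $\diam W_n\leq\max(a_k,\eps_n)=\eps_n$. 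Setting $L_j=\bigcup_{k\in K_j}I_k$ yields a partition of $\Nset$ into consecutive finite intervals that witnesses that $\{W_n\}$ is a $\gamma$-groupable cover of $X\times Y$: for any $(x,y)\in X\times Y$ and all sufficiently large $j$, every $k\in K_j$ satisfies $y\in\bigcup_{n\in I_k}U_n$ and at least one $k\in K_j$ satisfies $x\in V_k$, whence $(x,y)\in V_k\times\bigcup_{n\in I_k}U_n\subs\bigcup_{n\in L_j}W_n$. Theorem~\ref{besic2} then yields that $X\times Y$ is \ssmz{}.

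For (ii), by Theorem~\ref{besic} it suffices to show $\hm^h(X\times Y)=0$ for every gauge $h$. Fix $h$. Since $Y$ is \ssmz{}, $\uhm^h(Y)=0$, so Lemma~\ref{lem1}(vi) produces a $\sigma$-compact set $K\supseteq Y$ (in the ambient complete space) with $\hm^h(K)=0$. Write $K=\bigcup_n K_n$ with each $K_n$ compact; then $\hm^h(K_n)=0$ by monotonicity. The implication (i)\Implies(ii) of Theorem~\ref{prodHnull}, applied to the \smz{} space $X$ and the compact set $K_n$, gives $\hm^h(X\times K_n)=0$ for each $n$. Countable subadditivity now yields
$$
  \hm^h(X\times Y)\leq\hm^h(X\times K)\leq\sum_n\hm^h(X\times K_n)=0,
$$
as desired.

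The delicate point is the simultaneous threading of two witnessing partitions in (i): the definition $L_j=\bigcup_{k\in K_j}I_k$ is forced so that $\bigcup_{n\in L_j}W_n$ factors cleanly through the two already-present $\gamma$-groupable structures, and the coupling $\diam V_k\leq a_k=\min_{n\in I_k}\eps_n$ is forced so that diameters remain bounded term-by-term. Once this bookkeeping is arranged, both claims follow from the corresponding characterization theorems with no further technical work.
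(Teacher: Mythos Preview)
Your proof of (ii) is essentially the paper's own argument: engulf $Y$ in a $\sigma$-compact set of zero $h$-Hausdorff measure inside the completion and apply Theorem~\ref{prodHnull} to each compact piece. One small citation issue: Lemma~\ref{lem1}(vi) only tells you that the \emph{infimum} over $\sigma$-compact supersets is $0$, not that a $\sigma$-compact $K$ with $\hm^h(K)=0$ exists. To get that, cite (viii) first (pick $g\prec h$; then $\uhm^g(Y)=0$ puts $Y\in\NNs(\uhm_0^h)$) and then (v). This is a one-line fix.

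For (i) your argument is correct but takes a different route from the paper. The paper dispatches (i) in a single line via Theorem~\ref{prodUhnull}: since $Y$ is \ssmz{}, Lemma~\ref{lem1}(viii) gives $Y\in\NNs(\uhm_0^h)$ for every $h$, and then~\ref{prodUhnull}(ii) yields $\uhm^h(X\times Y)=0$. You instead bypass the upper-Hausdorff-measure machinery entirely and work directly with the covering characterization of Theorem~\ref{besic2}, threading the two $\gamma$-groupable structures together by hand. The coupling $a_k=\min_{n\in I_k}\eps_n$ and the nested partition $L_j=\bigcup_{k\in K_j}I_k$ are exactly what is needed, and your verification that the $L_j$ witness $\gamma$-groupability (for large $j$, every $k\in K_j$ is large enough that $y$ is caught by $\mc U\rest I_k$, and some $k\in K_j$ catches $x$) is correct. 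Your approach is more elementary and self-contained; the paper's is shorter only because the heavier product lemma~\ref{prodUhnull} has already been established.
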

\begin{proof}
Suppose $Y$ is \ssmz. By Lemma~\ref{lem1}(viii),
$Y\in\NNs(\uhm_0^h)$ for all gauges $h$.

(i) If $X$ is \ssmz{}, then Theorem~\ref{prodUhnull}(ii) yields
$\uhm^h(X\times Y)=0$ for all gauges $h$.

(ii) Let $h$ be a gauge. Since $Y$ is \ssmz, it is \si totally bounded and
therefore there is a \si compact set $K\sups Y$
in the completion of $Y$ such that $\hm^h(K)=0$.
Since $X$ is \smz{}, Theorem~\ref{prodHnull}(ii)
yields $\hm^h(X\times Y)=0$, which is by
Theorem~\ref{basicHnull}(ii) enough.
\end{proof}

This theorem, together with the above example, provides an easy argument that
shows that consistently not every \smz{} set is \ssmz: The \smz{} set $X$
such that $X\times X$ is not \smz{} cannot be \ssmz{}.

We illustrate the power of the theorem by the following
\begin{coro}
Let $X\subs\Rset^2$. The following are equivalent.
\begin{enum}
\item $X$ is \ssmz,
\item all orthogonal projections of $X$ on lines are \ssmz,
\item at least two orthogonal projections of $X$ on lines are \ssmz.
\end{enum}
\end{coro}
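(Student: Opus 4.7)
The plan is to cycle through (i)$\Rightarrow$(ii)$\Rightarrow$(iii)$\Rightarrow$(i). The easy directions come essentially for free: any orthogonal projection $\pi_L:\Rset^2\to L$ is $1$-Lipschitz, hence uniformly continuous, so by Theorem~\ref{basicUhnull}, sharp measure zero is preserved by such maps, giving (i)$\Rightarrow$(ii); and (ii)$\Rightarrow$(iii) is trivial.

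The work is in (iii)$\Rightarrow$(i). Let $L_1, L_2$ be two lines with distinct directions (one may assume this without loss of generality, since parallel lines produce the same projection up to a translation) for which the orthogonal projections $\pi_1(X), \pi_2(X)$ are both \ssmz{}. By Theorem~\ref{productHUH}(i), the cartesian product $\pi_1(X)\times\pi_2(X)$, equipped with the maximum metric \eqref{maxmetric}, is itself \ssmz{}. The key linear-algebraic observation is that the map
\[
  \phi:\Rset^2\to L_1\times L_2,\qquad \phi(x)=(\pi_1(x),\pi_2(x)),
\]
is a linear isomorphism (since $L_1,L_2$ have distinct directions), hence bi-Lipschitz. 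Consequently $\phi(X)\subs\pi_1(X)\times\pi_2(X)$ is \ssmz{} (the property is hereditary, as $\uhm^h$ is an outer measure), and since $\phi^{-1}\rest\phi(X)$ is Lipschitz, Theorem~\ref{basicUhnull} applied to it yields that $X$ itself is \ssmz{}.

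There is no serious obstacle: the proof reduces to a direct application of the product theorem~\ref{productHUH}(i) combined with the elementary fact that a point in $\Rset^2$ is reconstructible, in a bi-Lipschitz fashion, from its orthogonal projections onto any two non-parallel lines. The only conceptual subtlety worth flagging is that one genuinely needs the \emph{sharp} measure zero product theorem, and not just Theorem~\ref{prodHnull} for ordinary \smz{}, because, as observed immediately after Theorem~\ref{productHUH}, the analogous product result for \smz{} fails consistently and so the naive transcription of this argument for \smz{} in place of \ssmz{} would break down.
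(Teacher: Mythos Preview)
Your proof is correct and follows essentially the same route as the paper's: both use preservation of \ssmz{} under uniformly continuous maps for (i)$\Rightarrow$(ii), and for (iii)$\Rightarrow$(i) both embed $X$ into $\pi_1(X)\times\pi_2(X)$ and invoke the product theorem. The paper simplifies by reducing to the case where $L_1,L_2$ are the coordinate axes, whereas you keep general lines and make the bi-Lipschitz identification $\phi$ explicit; you also correctly cite part~(i) of Theorem~\ref{productHUH}, while the paper's citation of part~(ii) appears to be a slip.
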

\begin{proof}
Since orthogonal projections are uniformly continuous, (i)\Implies(ii) from
preservation of \ssmz{} by uniformly continuous mappings.
(ii)\Implies(iii) is trivial.
(iii)\Implies(i): Let $L_1,L_2$ be two nonparallel lines and $\pi_1,\pi_2$
the corresponding orthogonal projections. \emph{Mutatis mutandis} we may
suppose that $L_1$ is the $x$-axis and $L_2$ is the $y$-axis. Thus
$X\subs\pi_1X\times\pi_2X$.
Theorem \ref{productHUH}(ii) thus concludes the proof.
\end{proof}

Theorem~\ref{productHUH}(ii) also raises the question whether a space whose
product with any \smz{} set of reals is \smz{} has to be \ssmz{}.
As shown in \cite{Zin_M-add}, the answer is consistently no:
The forcing extension constructed by Corazza in~\cite{MR982239}
we have the following.
A similar observation was noted without proof in~\cite{MR1610427}
and also in~\cite{MR3731016}.
\begin{prop}\label{corazza}
In the Corazza model there is a set $X\subs\Cset$ that is not \ssmz{}
and yet $X\times Y$ is \smz{} for each \smz{} set $Y\subs\Cset$.
\end{prop}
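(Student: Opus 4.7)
The plan is to exploit the defining combinatorial feature of the Corazza extension from \cite{MR982239}: in that model every \smz{} set $X \subseteq \Cset$ is \emph{productively} strong measure zero, meaning $X \times Y \in \smz(\Cset \times \Cset)$ whenever $Y \in \smz(\Cset)$. Taking this productivity as a black box from \cite{MR982239}, the entire proposition reduces to producing, inside the Corazza model, a single \smz{} set $X \subseteq \Cset$ that is not \ssmz{}; the productivity then automatically upgrades $X$ to a witness for the statement.

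To produce such an $X$, I would diagonalize against the characterization of \ssmz{} given in Lemma~\ref{lem1}(vi): a set $E \subseteq \Cset$ is \ssmz{} if and only if, for every gauge $h$, there exists a $\sigma$-compact set $K \supseteq E$ with $\hm^h(K) = 0$. Fix a single gauge $h_0$ (e.g. $h_0(r) = 1/\log(1/r)$) and enumerate as $\{K_\alpha : \alpha < \omega_1\}$ all $\sigma$-compact subsets of $\Cset$ with $\hm^{h_0}(K_\alpha) = 0$; in the Corazza model the relevant cardinal arithmetic (namely $\non(\smz)=\omega_1$ and a $\diamondsuit$-like principle surviving the iteration) makes such a catalog available. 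Then build $X = \{x_\alpha : \alpha < \omega_1\}$ by transfinite recursion, at stage $\alpha$ choosing
\[
x_\alpha \in \Cset \setminus \bigcup_{\beta \leq \alpha} K_\beta,
\]
which is possible because each $K_\beta$ has $\hm^{h_0}$-measure zero and hence empty interior. By construction $X \not\subseteq K_\beta$ for every $\beta$, so $X$ is not contained in any $\sigma$-compact $\hm^{h_0}$-null set, and therefore $X$ is not \ssmz{}.

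The genuinely delicate part is simultaneously arranging that $X$ be \smz{}. Here one interleaves the diagonalization above with a bookkeeping step enumerating all sequences $\seqeps$ of positive reals of length $\omega$ (there are $\mathfrak{c} = \omega_1$ many in the Corazza model), and at each stage $\alpha$ also commits countably many points $x_\alpha^n$ to lie inside a witnessing cover $\{U_n^\alpha\}$ with $\diam U_n^\alpha \leq \eps_n^\alpha$. The Corazza forcing is precisely designed so that Laver-type trees ensure such coherent witnessing covers exist for any bounded subset built along an $\omega_1$-recursion, and this is what distinguishes this model from generic ZFC. The main obstacle is verifying that the two recursive demands (avoiding the $K_\alpha$'s and lying inside prescribed covers) are compatible at each stage: this compatibility follows because any nonempty basic clopen set in $\Cset$ has full $\hm^{h_0}$-mass, so the avoidance requirement cuts out a null set from each prescribed cover element, leaving ample room to place $x_\alpha$.

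Granting the construction, $X$ is \smz{} but not \ssmz{}; applying Corazza's productivity theorem gives $X \times Y \in \smz(\Cset \times \Cset)$ for every $Y \in \smz(\Cset)$, which is the conclusion. In summary, the proof is a diagonalization against $\hm^{h_0}$-null $\sigma$-compact sets carried out inside a model whose special forcing iteration guarantees that the resulting $\omega_1$-sized \smz{} set behaves productively with all other \smz{} sets.
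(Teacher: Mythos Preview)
The paper does not supply a proof of this proposition; it only cites \cite{Zin_M-add}. So there is no proof in the paper to compare against directly, but your proposal has concrete errors regardless of how the intended argument runs.

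The most serious error is the claim that $\mathfrak{c}=\omega_1$ in the Corazza model. Corazza's extension in \cite{MR982239} is obtained by a countable-support iteration of length $\omega_2$ over a ground model of GCH, and the resulting model has $\mathfrak{c}=\aleph_2$. This destroys your enumeration of all $\sigma$-compact $\hm^{h_0}$-null subsets of $\Cset$ in order type $\omega_1$: there are $\aleph_2$ such sets. If you lengthen the recursion to $\omega_2$, then at stages beyond $\omega_1$ you must avoid an $\aleph_1$-sized union of $\hm^{h_0}$-null sets, and nothing you have written rules out that union being all of $\Cset$.

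A second problem is the ``black box'' you attribute to \cite{MR982239}. Corazza's paper does not prove that every \smz{} set is productively \smz; his main results are structural and cardinality restrictions on \smz{} and universal-measure-zero sets. Productivity in his model is a consequence one must derive from those restrictions, not a theorem one can cite.

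Finally, the interleaved bookkeeping meant to force $X$ into \smz{} is hand-waved. The appeals to ``Laver-type trees'' and a ``$\diamondsuit$-like principle surviving the iteration'' are unsupported: Corazza's forcing is not Laver forcing, and no such guessing principle is invoked in his construction. You give no actual mechanism for reconciling, at a fixed stage, the avoidance requirement (stay out of $\bigcup_{\beta\leq\alpha}K_\beta$) with the covering requirement (land inside a prescribed ball of a fixed small diameter).

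Your high-level reduction---show that \smz{} is productive in the model, then show $\smz\neq\ssmz$ there---is the right shape. But neither half is established by what you wrote. The argument in \cite{Zin_M-add} does not proceed by an ad hoc transfinite diagonalization; it exploits directly the structural facts about \smz{} sets that Corazza's model provides, so that a witness $X$ to $\smz\setminus\ssmz$ can be chosen off the shelf rather than built by hand.
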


\section{Meager additive sets and sharp measure zero}

We now look at the meager-additive sets in Polish groups
and establish their surprising and profound connection with \ssmz{} sets.
The theory nicely parallels the Galvin-Mycielski-Solovay Theorem.
Most of the material of this section comes from \cite{Zin_M-additive} and
\cite{Zin_M-add}.

Whenever $\grG$ is a Polish group, $\ssmz(\grG)$ denotes the family of sharp
measure zero sets with respect to any left-invariant metric.
The notion od \ssmz{} is of course a uniform invariant -- it is neither a topological,
nor a metric property. Therefore it does not matter which left-invariant metric
we choose. The same proof shows that \ssmz{} sets, just like \smz{} sets,
form a bi-invariant \si ideal.

\begin{prop}\label{ssigma}
$\ssmz(\grG)$ is a bi-invariant \si-ideal.
\end{prop}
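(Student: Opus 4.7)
The plan is to mirror the proof of Proposition \ref{sigma}, replacing the role of arbitrary open covers by $\gamma$-groupable covers of small diameter (Theorem \ref{besic2}), or equivalently by the measure characterization $\uhm^h(\,\cdot\,)=0$ for all gauges $h$. There are three things to check: downward closure, closure under countable unions, and invariance under left and right translations.

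For the $\sigma$-ideal property, downward closure is immediate from monotonicity of $\uhm^h$. For countable unions I would bypass the combinatorial description entirely: since $X\in\ssmz(\grG)$ iff $\uhm^h(X)=0$ for every gauge $h$, and since $\uhm^h$ is an outer measure (as observed right after its construction via Munroe's Method~I), it is countably subadditive, so $\uhm^h(\bigcup_n X_n)\leq\sum_n\uhm^h(X_n)=0$ for every gauge $h$ as soon as each $X_n$ is \ssmz{}. This is arguably cleaner than the partition argument used in Proposition \ref{sigma}, which would be awkward here because the witnessing partitions of $\gamma$-groupable covers of different $X_n$'s need not align.

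Left-invariance is essentially free: if $d$ is a left-invariant compatible metric on $\grG$, then for each $g\in\grG$ the map $x\mapsto gx$ is a $d$-isometry, hence preserves diameters, witnessing partitions, and therefore the combinatorial property of Theorem \ref{besic2}.

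Right-invariance is the one genuinely new point, and the place I expect to spend most of the effort. The trick is to change the metric: given $g\in\grG$, set $d'(x,y):=d(xg,yg)$. Using left-invariance of $d$ one checks that $d'$ is again a compatible left-invariant metric on $\grG$. Since any two left-invariant compatible metrics on a separable metrizable group are uniformly equivalent, and \ssmz{} is a uniform invariant (a direct consequence of Theorem \ref{basicUhnull}, and also remarked at the start of this section), we have $X\in\ssmz(\grG,d)\Leftrightarrow X\in\ssmz(\grG,d')$. Applying Theorem \ref{besic2} in the metric $d'$ to the given sequence $\seq{\eps_n}$ produces a $\gamma$-groupable cover $\{U_n\}$ of $X$ with $\diam_{d'}U_n\leq\eps_n$; the identity $\diam_{d'}U_n=\diam_d(U_ng)$ then converts this into a $\gamma$-groupable cover $\{U_ng\}$ of $X\cdot g$ whose $d$-diameters satisfy the same bound. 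By Theorem \ref{besic2} again, $X\cdot g\in\ssmz(\grG)$. Everything else in the proof is a direct transcription of the argument for Proposition \ref{sigma}.
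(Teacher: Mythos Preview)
Your proof is correct. The paper does not give an explicit proof of this proposition; it simply remarks that ``the same proof'' as for Proposition~\ref{sigma} works, having just noted that \ssmz{} is a uniform invariant and that the \si additivity is straightforward from Theorem~\ref{basicUhnull}.

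Your treatment is in fact more explicit than what the paper suggests, and the two points you single out are exactly the places where the transcription from Proposition~\ref{sigma} needs a word. For countable unions, your use of countable subadditivity of $\uhm^h$ is the clean route; the partition trick from Proposition~\ref{sigma} does not transcribe verbatim because it was phrased via the Rothberger-bounded characterization, but it can be made to work once one observes the analogous characterization of \ssmz{} (for every sequence $\seq{U_n}$ of nonempty open sets there exist $g_n$ such that $\seq{g_nU_n}$ is a $\gamma$-groupable cover). For right-invariance, your pullback metric $d'(x,y)=d(xg,yg)$ is a clean way to express that right translation is a uniform self-equivalence of $(\grG,d)$; with the Rothberger-type characterization just mentioned one can alternatively run the $U_n\mapsto U_ng^{-1}$ trick of Proposition~\ref{sigma} verbatim. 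Either route is fine, and yours is arguably tidier since it avoids formulating the auxiliary characterization.
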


Recall that if $\grG$ is a Polish group, we denote by $\MM(\grG)$,
or simply by $\MM$ if there is no danger of confusion, the ideal of meager subsets
of $\grG$.

\begin{defn}
A set $S\subs\grG$ is called \emph{meager-additive} (or \emph{$\MM$-additive})
if $SM$ is meager for every meager set $M\subs\grG$.
The family of all meager-additive sets is denoted by $\MM^*(\grG)$.
\end{defn}

It is straightforward from the definition that
\begin{prop}\label{sssigma}
$\MM^*(\grG)$ is a bi-invariant \si-ideal.
\end{prop}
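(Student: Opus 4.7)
The plan is to imitate exactly the proof of Proposition \ref{sigma}, replacing the covering clause by the meagerness of the product, and invoking two standard facts: first, that $\MM(\grG)$ is itself a bi-invariant \si-ideal (because left- and right-translations in a topological group are homeomorphisms, so they preserve meagerness, and countable unions of meager sets are meager by Baire category); second, the obvious distributivity identities
$$
(A\cup B)\cdot C=(A\cdot C)\cup(B\cdot C),\quad (g\cdot A)\cdot C=g\cdot(A\cdot C),\quad (A\cdot g)\cdot C=A\cdot (g\cdot C).
$$

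To check \si-additivity, let $\{S_n:n\in\Nset\}\subs \MM^*(\grG)$ and $M\in\MM(\grG)$. Then
$$
\Bigl(\bigcup_{n\in\Nset}S_n\Bigr)\cdot M=\bigcup_{n\in\Nset}(S_n\cdot M)
$$
is a countable union of meager sets, hence meager; so $\bigcup_n S_n\in\MM^*(\grG)$. Downward closure under inclusion is immediate since $S\subs T$ gives $S\cdot M\subs T\cdot M$, and subsets of meager sets are meager.

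For bi-invariance, fix $S\in\MM^*(\grG)$ and $g\in\grG$, and let $M\in\MM(\grG)$. On the left, $(g\cdot S)\cdot M=g\cdot(S\cdot M)$, and this is meager because $S\cdot M$ is meager (by definition of meager-additivity) and left translation by $g$ is a homeomorphism. On the right, $(S\cdot g)\cdot M=S\cdot(g\cdot M)$; since $g\cdot M$ is meager (again by homeomorphism), meager-additivity of $S$ gives that $S\cdot(g\cdot M)$ is meager. Thus both $g\cdot S$ and $S\cdot g$ belong to $\MM^*(\grG)$.

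The only possible subtlety is to ensure that left- and right-multiplication by a fixed element really is a homeomorphism on a general (not necessarily topological-group-in-the-strict-sense) Polish group, but this is built into the definition of a topological group, so no obstacle arises. In short, the proof is a two-line diagram chase on top of the already established bi-invariance of $\MM(\grG)$, and contains no genuinely hard step.
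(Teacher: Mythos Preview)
Your proof is correct. The paper does not actually spell out a proof of this proposition; it simply states that the result ``is straightforward from the definition,'' and your argument is precisely the routine verification the authors had in mind (compare the explicit proof they give for Proposition~\ref{sigma}, which you are rightly mimicking).
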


The hard implication of Galvin-Mycielski-Solovay Theorem claims that
if $S$ is \smz{} set in a locally compact group, then $S\cdot M\neq\grG$.
The analogous  statement for \ssmz{} and meager-additive sets is about
as hard as that.

\begin{thm}[{\cite{Zin_M-additive}}]\label{thm:action3}
Let $\grG$ be a locally compact Polish group.
Then
every \ssmz{} set $S\subs\grG$ is $\MM$-additive, i.e.,
$\ssmz(\grG)\subs\MM^*(\grG)$.
\end{thm}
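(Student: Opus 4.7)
The plan is to bootstrap the Galvin--Mycielski--Solovay argument in the locally compact case (Proposition \ref{prop:nontriv}) by exploiting the $\sigma$-compact approximation built into the definition of \ssmz{}: for every gauge $h$ there is a $\sigma$-compact $K\sups S$ with $\hm^h(K)=0$. Whereas for merely \smz{} sets this yields only $S\cdot M\neq\grG$, the $\sigma$-compact envelope in the \ssmz{} case will upgrade empty interior to nowhere density and hence give meagerness of $S\cdot M$.

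First, I would reduce. By Proposition \ref{prop: M_equals_UM} and the $\sigma$-compactness of the locally compact Polish group $\grG$, any meager $M\subs\grG$ decomposes as $M=\bigcup_jN_j$ with each $N_j$ compact and nowhere dense. Since $S\cdot M=\bigcup_jS\cdot N_j$, it suffices to show $S\cdot N$ is meager for every fixed compact nowhere dense $N\subs\grG$.

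Fix such $N$. Apply Lemma \ref{comp} recursively with $P=N$ to extract a decreasing sequence $\seq{\eps_n}$ with $\eps_n\to 0$ along which the Cauchy construction of Proposition \ref{prop:nontriv} succeeds. Take a strictly increasing gauge $h$ satisfying $h(\eps_n)=1/n$ (e.g., piecewise linear between the values $\eps_n$). The definition of \ssmz{} yields a $\sigma$-compact $K=\bigcup_mK^m$ containing $S$ and with each compact piece $K^m$ satisfying $\hm^h(K^m)=0$. The main claim is that each $K^m\cdot N$ is nowhere dense in $\grG$, which implies that $K\cdot N=\bigcup_mK^m\cdot N$ is meager and hence so is $S\cdot N\subs K\cdot N$.

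To prove the claim, for each $k\in\Nset$ use $\hm^h(K^m)=0$ to choose a countable cover of $K^m$ of total $h$-mass $<2^{-k}$, and amalgamate these into a single countable cover $\{F_n\}$ of $K^m$ with $\sum_nh(\diam F_n)<\infty$ and such that every point of $K^m$ belongs to infinitely many $F_n$. Sorting $\{F_n\}$ in decreasing order of diameter and invoking $h(\eps_n)=1/n$, one obtains $\diam F_n\leq\eps_n$ for all sufficiently large $n$, as in the proof of Theorem \ref{besic}. This is precisely the input required by the locally compact construction in Proposition \ref{prop:nontriv}: starting from any compact ball $B(x_0,\eps_0)\subs\grG$, iterate Lemma \ref{comp} with $P=N$ to build a nested sequence $B(x_{n+1},\eps_{n+1})\subs B(x_n,\eps_n)\setminus F_{n+1}\cdot N$. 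The limit point $x\in B(x_0,\eps_0)$ then satisfies $x\notin F_n\cdot N$ for every $n$, and hence $x\notin K^m\cdot N$, so $K^m\cdot N$ has empty interior in every compact ball in $\grG$, i.e., in $\grG$. Being compact and therefore closed, $K^m\cdot N$ is nowhere dense, completing the argument. The main technical point is the extraction of the cover $\{F_n\}$ with the prescribed diameter bounds $\diam F_n\leq\eps_n$ from the single-gauge hypothesis $\hm^h(K^m)=0$; this is managed by carefully matching the gauge $h$ to the sequence $\seq{\eps_n}$ dictated by Lemma \ref{comp} applied to $N$, combined with the rearrangement trick from the proof of Besicovitch's theorem.
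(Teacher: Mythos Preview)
Your outline is essentially correct and takes a route different from the paper's. The paper invokes the $\gamma$-groupable cover characterization of \ssmz{} (Theorem~\ref{besic2}): given $\seqeps$ coming from Lemma~\ref{comp}, it produces a $\gamma$-groupable cover $\{E_n\}$ of $S$ with $\diam E_n<\eps_n$, runs the nested-ball construction inside each member of a fixed countable base to obtain a dense set of points avoiding $\bigcup_nE_n\cdot P_n$, and only at the end manufactures an $F_\sigma$ superset $\widehat S$ of $S$ from the witnessing groups, so that $\widehat S\cdot M$ is an $F_\sigma$ set missing a dense set. Your approach instead pulls the $\sigma$-compact envelope $K=\bigcup_mK^m\sups S$ directly from the definition of \ssmz{}, so closedness of $K^m\cdot N$ is automatic and only a $\lambda$-cover of each $K^m$ (every point in infinitely many $F_n$) is needed, not the $\gamma$-groupable structure. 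That is a legitimate simplification.

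There is, however, a technical gap you must patch. The $\del$ produced by Lemma~\ref{comp} depends on the fixed compact ball $C$ and on the compact set $K$ over which the center $y$ ranges. You build $\seqeps$ by appeal to Proposition~\ref{prop:nontriv}, which fixes a \emph{single} $C$; yet later you claim the nested-ball construction succeeds ``starting from any compact ball $B(x_0,\eps_0)$''. That does not follow: a single avoided point gives $K^m\cdot N\neq\grG$, not empty interior. Likewise, the compact $K$ in Lemma~\ref{comp} must be fixed before $\seqeps$, hence before the gauge $h$, hence before $K^m$ exists --- so you cannot take $K=K^m$. The repair is exactly the diagonalization the paper performs explicitly in~\eqref{eq1}: choose in advance a countable base with compact balls $C_k$ and an exhaustion $K_n\upto\grG$, and build $\seqeps$ so that at step $n$ the conclusion of Lemma~\ref{comp} holds for every $C_i$ with $i\le n$ and with $K=K_n$. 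Then for each $m$ pick $n_0$ with $K^m\subs K_{n_0}$; the centers $y_n\in K^m\subs K_n$ are legitimate for all $n\ge n_0$, and the construction run in each $C_k$ (from level $\max(k,n_0)$ on) yields a dense set disjoint from $K^m\cdot N$, giving nowhere density as you wanted.
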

\begin{proof}
The proof utilizes Lemma~\ref{comp}.
Suppose $S\subs\grG$ is a $\ssmz$ set and let $M\subs\grG$ be meager.
Let $K_n$ be compact sets in $\grG$ with $K_n\upto\grG$ and
let $P_n$ be compact nowhere dense sets with $P_n\upto M$.
Let $\{U_k\}$ be a countable base of $\grG$.
For each $k$ choose $x_0^k\in\grG$ and $\eps_0^k>0$ such that
$B(x_0^k,\eps_0^k)\subs U_k$ is compact, and let $C_k=B(x_0^k,\eps_0^k)$.
Use Lemma~\ref{comp} to recursively construct a sequence
$\seqeps$ of positive numbers such that
\begin{equation}\label{eq1}
\begin{split}
\forall n\ \forall i\leq n\
  & \forall x\in C_i\ \forall y\in K_n\ \exists z\in C_i\\
   & B(z,\eps_n)\subs B(x,\eps_{n-1})\setminus((B(y,\eps_n)\cap K_n)\cdot P_n).
\end{split}
\end{equation}
Since $S$ is $\ssmz$, there is an  $\gamma$-groupable cover $\{E_n\}$ of $S$
such that $\diam E_n<\eps_n$ for all $n$.
Hence for each $n$ there is $y$ such that $E_n\subs B(y,\eps_n)$.
Therefore we may use \eqref{eq1} to construct for each $k$ a sequence $\seq{x_n^k}$
such that
\begin{equation}\label{eq2}
  B(x_{n+1}^k,\eps_{n+1})\subs B(x_n^k,\eps_n)
  \setminus ((E_{n+1}\cap K_{n+1})\cdot P_{n+1}).
\end{equation}
It is easy to check that since $\{E_n\}$ is a $\gamma$-groupable cover of $S$
and $K_n\upto \grG$, the family $\{E_n\cap K_n\}$ is also a $\gamma$-groupable cover of $S$.
Thus we might have supposed that $E_n\subs K_n$, and also that all $E_n$'s are closed.
Therefore \eqref{eq2} simplifies to
\begin{equation}\label{eq3}
  B(x_{n+1}^k,\eps_{n+1})\subs B(x_n^k,\eps_n)
  \setminus (E_{n+1}\cdot P_{n+1}).
\end{equation}
In particular, $B(x_n^k,\eps_n)$ is a decreasing sequence of compact balls for all $k$
and thus there is a point $x^k\in U_k$ such that
\begin{equation}\label{eq4}
x^k\notin \bigcup_{n\in\Nset}(E_n\cdot P_n).
\end{equation}
Now construct a set $\widehat S$ as follows: Let $\mc G_j$ be the groups of $E_n$'s
witnessing to the $\gamma$-groupability of $\{E_n\}$.
Put $G_n=\bigcap_{n\in\mc G_j}E_n$ and let
$F_n=\bigcap_{i<n}G_i$ and $\widehat S=\bigcup_{n\in\Nset}F_n$.
It is clear that since $E_n$'s are closed, the set $\widehat S$ is $F_\sigma$, and
clearly $S\subs\widehat S$.
Moreover, routine calculation shows that
$\widehat S\times M\subs\bigcup_n E_n\times P_n$. Therefore \eqref{eq4} yields
$x^k\notin\widehat S\cdot M$ for all $k$.
So letting $D=\{x^k:k\in\Nset\}$, the set $D$ is disjoint with
$\widehat S\cdot M$ and it is dense in $X$.
Since $\widehat S$ and $M$ are \si compact, so is the set $\widehat S\times M$.
It follows that $\widehat S\cdot M$, being a continuous image of $\widehat S\times M$,
is also \si compact.
In summary, $\widehat S\cdot M)$ is an $F_\sigma$ set disjoint with a dense set,
and is thus meager.
\end{proof}

One would expect that the reverse implication that parallels the trivial
Proposition~\ref{prop:triv} of Prikry would be also very easy.
Surprisingly, it is not easy at all. Only very recently it was proved
in~\cite{Zin_M-additive} that it holds for Polish groups that admit a (both-sided)
invariant metric.

Recall that Polish groups that admit an invariant metric are referred to as \TSI{}
groups. Compact or abelian Polish groups are \TSI; any invariant metric on a
\TSI{} group is complete.

\begin{thm}[{\cite{Zin_M-additive}}]\label{polish1}
Let $\grG$ be a \TSI{} Polish group.
If $S\subs\grG$ is an $\MM$-additive set, then $S$ is $\ssmz$ (in any metric on $\grG$).
\end{thm}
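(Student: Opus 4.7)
The plan is to verify the characterisation of \ssmz{} via $\gamma$-groupable covers (Theorem~\ref{besic2}). Fix a compatible (both-sided) invariant metric $d$ on $\grG$, which exists by the $\TSI$ hypothesis, and a sequence $\seq{\eps_n}$ of positive reals. Put $V_n=B^\circ(1_\grG,\eps_n/3)$, so that $\diam(gV_n)\leq\eps_n$ for every $g\in\grG$ by invariance. The goal is to produce a partition $\Nset=\bigsqcup_{j\in\Nset}I_j$ into consecutive finite intervals and elements $g_n\in\grG$ such that $\{g_nV_n:n\in\Nset\}$ is a $\gamma$-groupable cover of $S$ witnessed by $\{I_j\}$.

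The strategy parallels Prikry's derivation in Proposition~\ref{prop:triv}, but in the reverse direction. The heart of the matter is to produce, \emph{independently of $S$}, a partition $\{I_j\}$ and auxiliary elements $h_n\in\grG$ such that, writing $E_j=\bigcup_{n\in I_j}h_nV_n$, the set
\[
M\;=\;\limsup_{j\to\infty}(\grG\setminus E_j)\;=\;\grG\setminus\liminf_{j\to\infty}E_j
\]
is meager in $\grG$. Granting such a construction, meager-additivity applied to the meager set $M^{-1}$ yields that $SM^{-1}$ is meager and therefore a proper subset of $\grG$; I then pick any $x\in\grG\setminus SM^{-1}$. A direct computation shows $S\cap xM=\emptyset$, whence
\[
S\;\subs\;\grG\setminus xM\;=\;x(\grG\setminus M)\;=\;x\liminf_jE_j\;=\;\liminf_j\bigcup_{n\in I_j}(xh_n)V_n.
\]
Setting $g_n:=xh_n$ and invoking the invariance of $d$ (so that $\diam(g_nV_n)=\diam V_n\leq\eps_n$), the family $\{g_nV_n:n\in\Nset\}$ is a $\gamma$-groupable cover of $S$ witnessed by $\{I_j\}$, which finishes the proof via Theorem~\ref{besic2}.

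The main obstacle is the topological construction of $\{I_j\}$ and $\{h_n\}$ making $M$ meager. In the locally compact case it is easy: an exhaustion $K_j\upto\grG$ by compact sets can be covered by finitely many translates of $V_n$ with $n\in I_j$, so that $E_j\sups K_j$ for all $j$ and hence $\liminf_jE_j=\grG$. For a general non-locally-compact $\TSI$ Polish group, $\grG$ need not be \si compact and no finite union of small balls can be dense, so each complement $\grG\setminus E_j$ may be topologically large. The way out is to let $|I_j|$ grow fast and to assign the $h_n$ through a fixed countable dense subset $\{d_k:k\in\Nset\}\subs\grG$ in such a way that a specific comeager $G_\delta$ set $W\subs\grG$ — built from how well the $d_k$ approximate points at the scales $\eps_n$, e.g., of the form $W=\bigcap_k\bigcup_{l\geq k}B(d_l,\eps_l/3)$ or a refinement synchronised with the partition $\{I_j\}$ — is contained in $\liminf_jE_j$. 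Bi-invariance of $d$ is essential here: it guarantees that the balls $gV_n$ have the same shape and diameter regardless of $g$, so that the translation by $x$ in the last step preserves the whole $\gamma$-groupable structure.
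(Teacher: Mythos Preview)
Your reduction to building a single meager set $M=\grG\setminus\liminf_jE_j$ and then only using that $SM^{-1}\neq\grG$ cannot work, and the difficulty you flag as ``the main obstacle'' is in fact an impossibility. Observe that the only consequence of $\MM$-additivity you invoke is $SM^{-1}\neq\grG$; by Theorem~\ref{kysiak} this already holds for every \smz{} set $S$ in a locally compact group, so a successful construction of $M$ would yield $\smz(\grG)\subs\ssmz(\grG)$, contradicting the (consistent) existence of \smz{} sets that are not \ssmz{} (see the remark after Theorem~\ref{productHUH} and Proposition~\ref{corazza}).

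Concretely, take $\grG=\Cset$ and $\eps_n=2^{-n}$. Each $E_j$ is a finite union of cylinders whose Haar measure is at most $\sum_{n\in I_j}2^{-n}\to 0$, so for every $k$ the closed set $\bigcap_{j\geq k}E_j$ has measure zero and therefore empty interior; thus $\liminf_jE_j=\bigcup_k\bigcap_{j\geq k}E_j$ is an $F_\sigma$ meager set and $M$ is comeager no matter how you choose the $h_n$ and the partition $\{I_j\}$. Your suggested comeager $W=\bigcap_k\bigcup_{l\geq k}B(d_l,\eps_l/3)$ is a $\limsup$, not a $\liminf$, and for exactly this reason it cannot be forced inside $\liminf_jE_j$.

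The paper does not attempt a direct translate argument of this kind. It notes that the full \TSI{} proof ``takes several pages'' and presents only the case $\grG=\Cset$, where it goes through Shelah's combinatorial characterisation of $\MM$-additive sets (Lemma~\ref{ShelahM}); from the witnessing $g\in\Pset$ and $y\in\Cset$ it assembles a $\gamma$-groupable cover by cylinders with controlled Hausdorff sum and applies Lemma~\ref{gammagr}. The essential point is that the $\gamma$-groupable structure is extracted from the full strength of $\MM$-additivity (for \emph{many} meager sets simultaneously, encoded in Shelah's condition), not from the failure of a single product $SM^{-1}$ to cover $\grG$. Any correct argument must use more than one avoidance.
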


\begin{thm}\label{M-additive}
Let $\grG$ be a locally compact \TSI{} Polish group.
Then $\ssmz(\grG)=\MM^*(\grG)$.
\end{thm}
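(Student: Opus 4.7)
The statement is an immediate conjunction of the two theorems just proved, so my plan is simply to point at each inclusion separately and invoke the appropriate hypothesis on $\grG$.

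First, I would establish the inclusion $\ssmz(\grG)\subseteq\MM^*(\grG)$ by applying Theorem~\ref{thm:action3}, which requires only local compactness of $\grG$ (not the \TSI{} assumption). That theorem, proved via the product-of-balls Lemma~\ref{comp} and the $\gamma$-groupable cover characterization of \ssmz{} from Theorem~\ref{besic2}, tells us that for every $S\in\ssmz(\grG)$ and every meager $M\subseteq\grG$, the product $S\cdot M$ is meager, i.e., $S\in\MM^*(\grG)$.

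Second, I would establish the reverse inclusion $\MM^*(\grG)\subseteq\ssmz(\grG)$ by applying Theorem~\ref{polish1}, which requires only that $\grG$ be \TSI{} (not local compactness). That theorem asserts that on any \TSI{} Polish group, every meager-additive set is \ssmz{} with respect to any compatible (equivalently, any invariant) metric.

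Since our hypothesis provides both local compactness and the \TSI{} property, both inclusions apply, and we conclude $\ssmz(\grG)=\MM^*(\grG)$. There is no real obstacle here — the work has already been done in Theorems~\ref{thm:action3} and~\ref{polish1}; the present theorem only observes that their hypotheses coexist on the class of locally compact \TSI{} Polish groups (which, by the way, includes all locally compact abelian Polish groups, so the corollary covers the classical Cantor-set and $\Rset^n$ cases uniformly).
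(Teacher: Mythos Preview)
Your proposal is correct and matches the paper's treatment exactly: the paper does not even write out a separate proof of this theorem, presenting it immediately after Theorems~\ref{thm:action3} and~\ref{polish1} as their evident conjunction. Your identification of which hypothesis (local compactness versus \TSI) drives each inclusion is accurate.
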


It takes several pages to prove Theorem~\ref{polish1}, in contrast to the five lines
of the proof of Proposition~\ref{prop:triv}.
We present a proof of the particular case of $\grG=\Cset$ that takes advantage of
the regular combinatorial structure of the Cantor set
and a deep characterization by Shelah of $\MM$-additive sets
(\cite{MR1324470} or \cite[Theorem 2.7.17]{MR1350295})
and is thus much shorter.

\begin{lem}[{\cite{MR1324470}}]\label{ShelahM}
$X\subs\Cset$ is $\MM$-additive if and only if
\begin{multline*}
  \forall f\in\UPset\,\,\exists g\in\Pset\,\,\exists y\in\Cset\,\,
  \forall x\in X\,\,\forall^\infty n\,\,\exists k\\
  g(n)\leq f(k)<f(k+1)\leq g(n+1)\ \&\
  x\rest [f(k),f(k+1))=y\rest [f(k),f(k+1)).
\end{multline*}
\end{lem}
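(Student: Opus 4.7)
The plan is to reduce Shelah's characterization to Bartoszy\'nski's combinatorial description of meager sets in $\Cset$ (see \cite[Theorem~2.2.4]{MR1350295}): a set $M\subs\Cset$ is meager if and only if there are $\tilde f\in\UPset$ and $\tilde y\in\Cset$ with
$M\subs A_{\tilde f,\tilde y}:=\{z\in\Cset:\fmany k,\ z\rest[\tilde f(k),\tilde f(k+1))\neq\tilde y\rest[\tilde f(k),\tilde f(k+1))\}$.
A useful observation is that $A_{\tilde f,\tilde y}\subs A_{h,\tilde y}$ whenever the partition by $h$ is coarser than the one by $\tilde f$, since disagreement on a small sub-block forces disagreement on the containing bigger block.

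For the easy direction, start with a meager set $M\subs A_{\tilde f,\tilde y}$ and apply the hypothesized combinatorial condition to $\tilde f$ to obtain $g\in\Pset$ and $y\in\Cset$. For $x\in X$ and $z\in M$, for all sufficiently large $n$ there is $k$ with $[\tilde f(k),\tilde f(k+1))\subs[g(n),g(n+1))$ such that $x$ matches $y$ on that sub-block; by pushing $n$ further up we can also assume $k$ lies in the cofinite tail where $z$ disagrees with $\tilde y$. Hence $x+z$ differs from $y+\tilde y$ on $[\tilde f(k),\tilde f(k+1))$, and therefore on the whole $g$-block. This gives $X+M\subs A_{g,y+\tilde y}$, which is meager.

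For the hard direction, apply meager-additivity of $X$ to the specific meager set $M_f:=\{z\in\Cset:\fmany k,\ z\rest[f(k),f(k+1))\neq\mathbf0\}$, yielding $X+M_f\subs A_{g',y}$ for some $g'$ and $y$. By the coarsening remark, one may replace $g'$ by a subsequence $g$ of $f$ (so that each $g$-block is a union of complete $f$-blocks) without losing the inclusion. I claim this $g$ and $y$ already witness the desired conclusion. Otherwise some $x\in X$ admits infinitely many \emph{bad} indices $n$ with $x\rest[f(k),f(k+1))\neq y\rest[f(k),f(k+1))$ for every $k$ with $[f(k),f(k+1))\subs[g(n),g(n+1))$. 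Define $z$ by putting $z\rest[f(k),f(k+1))=(x+y)\rest[f(k),f(k+1))$ for each $f$-block contained in a bad $g$-block, and letting $z\rest[f(k),f(k+1))$ be a fixed nonzero string on every other $f$-block. The bad-block hypothesis makes the first type of pattern nonzero, so $z\in M_f$; yet on every bad $n$ we have $(x+z)\rest[g(n),g(n+1))=y\rest[g(n),g(n+1))$, contradicting $x+z\in A_{g,y}$.

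The main obstacle is the hard direction: it hinges on the fortunate observation that the negation of the desired combinatorial conclusion provides exactly the ingredient needed to keep the sabotaging element $z$ inside the test set $M_f$ --- bad blocks are precisely the locations where $x+y\neq\mathbf0$ on every constituent $f$-block, which is what forces $z\in M_f$. Aligning the Bartoszy\'nski witness $g'$ with the given $f$ is a routine but necessary preliminary, handled by the coarsening remark.
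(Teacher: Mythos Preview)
The paper does not supply a proof of this lemma; it is stated with attribution to Shelah~\cite{MR1324470} (see also \cite[Theorem~2.7.17]{MR1350295}) and used as a black box in the proof of Theorem~\ref{MeShelah}. Your proof is correct and is essentially the standard argument found in the cited sources: both directions are reduced to Bartoszy\'nski's combinatorial description of meager subsets of $\Cset$, the easy direction being a direct computation and the hard direction proceeding by building a sabotaging element $z\in M_f$ from a hypothetical bad $x$.

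Two minor remarks. First, in the easy direction you should note explicitly that $g$ is (without loss of generality) strictly increasing and unbounded, so that $A_{g,y+\tilde y}$ is indeed meager; this follows because any $n$ for which no $k$ satisfies $g(n)\leq f(k)<f(k+1)\leq g(n+1)$ is an exception for \emph{every} $x$, hence there are only finitely many such $n$. Second, in the hard direction, the coarsening step requires $g$ to simultaneously refine into $f$-blocks and have each $g$-block contain a full $g'$-block; you should say a word on how to construct such a $g$ (recursively choose $g(n+1)$ in the range of $f$ and beyond the next $g'$-block), since ``subsequence of $f$'' alone does not guarantee the inclusion $A_{g',y}\subs A_{g,y}$.
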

\begin{thm}\label{MeShelah}
$\ssmz(\Cset)=\MM^*(\Cset)$.
\end{thm}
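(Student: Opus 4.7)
\emph{Plan.} The inclusion $\ssmz(\Cset)\subs\MM^*(\Cset)$ is a special case of Theorem~\ref{thm:action3}, since $\Cset$ is a compact (hence locally compact) Polish group. The substantive task is the reverse inclusion: I fix $X\in\MM^*(\Cset)$ and aim to show that $\uhm^h(X)=0$ for every gauge $h$, which by the intrinsic definition of $\ssmz$ forces $X\in\ssmz(\Cset)$.

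Given a gauge $h$, I first exploit the freedom to preselect $f$ before invoking Shelah. Recursively define a strictly increasing $f\in\UPset$ so that $h(2^{-f(k+1)})\leq 2^{-f(k)-k}$ for every $k$; this is possible by right-continuity of $h$ at $0$ and guarantees $\sum_k 2^{f(k)}h(2^{-f(k+1)})<\infty$. Now apply Lemma~\ref{ShelahM} to produce $g\in\Pset$ and $y\in\Cset$ that witness the $\MM$-additivity of $X$ for this particular $f$.

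For each $n$ set $J_n=\{k:g(n)\leq f(k)<f(k+1)\leq g(n+1)\}$, and let $\mc G_n$ be the finite family of all cylinders $\cyl s$ with $s\in 2^{f(k+1)}$ for some $k\in J_n$ and $s\rest[f(k),f(k+1))=y\rest[f(k),f(k+1))$. Shelah's conclusion gives directly that for every $x\in X$ and for all but finitely many $n$, $x\in\bigcup\mc G_n$ (take $s=x\rest f(k+1)$ for the $k$ supplied by the characterization); in particular $\mc G_n\neq\emptyset$ for every $n$ past some $n_0$. Enumerating the cylinders of $\bigcup_{n\geq n_0}\mc G_n$ block by block (one witnessing interval $I_j$ per $n$) produces a $\gamma$-groupable cover $\sseq{U_m}$ of $X$ by cylinders.

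The weight estimate is then immediate: each cylinder in $\mc G_n$ corresponding to $k\in J_n$ has diameter $2^{-f(k+1)}$ and $|\mc G_n|=\sum_{k\in J_n}2^{f(k)}$, while the sets $J_n$ are pairwise disjoint subsets of $\Nset$ (the intervals $[f(k),f(k+1))$ being pairwise disjoint). Hence
\[
  \sum_m h(\diam U_m)=\sum_n\sum_{k\in J_n}2^{f(k)}h(2^{-f(k+1)})\leq\sum_k 2^{f(k)}h(2^{-f(k+1)})<\infty,
\]
and Lemma~\ref{gammagr} together with Lemma~\ref{lem1}(iv) yields $\uhm^h(X)=0$, as required. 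The one conceptual step---and the only real obstacle---is recognizing that Shelah's purely combinatorial condition packages precisely into a $\gamma$-groupable cylindrical cover whose total $h$-weight can be driven to be summable by choosing $f$ sufficiently fast-growing in advance; everything after that is bookkeeping.
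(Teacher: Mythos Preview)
Your proof is correct and follows essentially the same route as the paper: cite Theorem~\ref{thm:action3} for $\ssmz(\Cset)\subs\MM^*(\Cset)$, then for the reverse direction choose $f$ fast enough that $\sum_k 2^{f(k)}h(2^{-f(k+1)})<\infty$, invoke Shelah's Lemma~\ref{ShelahM}, read off a $\gamma$-groupable cylindrical cover, and finish via Lemma~\ref{gammagr} and Lemma~\ref{lem1}(iv). The only cosmetic difference is that the paper groups the families $\mc B_k$ by the condition $g(n)\leq f(k)<g(n+1)$ (so that every $k$ lands in exactly one block), whereas you group by $k\in J_n=\{k:g(n)\leq f(k)<f(k+1)\leq g(n+1)\}$, which matches Shelah's clause verbatim but may omit some $k$'s; either bookkeeping yields the same estimate.
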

\begin{proof}
Let $S\subs\Cset$ be $\MM$-additive. Let $h$ be a gauge.
We will show that $\uhm^h(S)=0$.
Define recursively $f\in\UPset$ subject to
$$
  2^{f(k)}\cdot h\bigl(2^{-f(k+1)}\bigr)\leq 2^{-k},\quad k\in\Nset.
$$
By Lemma~\ref{ShelahM} there is $g\in\Pset$ and $y\in\Cset$ such that
\begin{multline}\label{ShelahM2}
  \forall x\in X\,\,\forall^\infty n\,\,\exists k\\
  g(n)\leq f(k)<g(n+1)\ \&\
  x\rest [f(k),f(k+1))=y\rest [f(k),f(k+1)).
\end{multline}
Define
\begin{alignat*}{3}
  &\mc B_k&&=
  \bigl\{
  \cyl{p\concat y\rest [f(k),f(k+1))}:p\in 2^{f(k)}
  \bigr\},\qquad
  && k\in\Nset,\\
  &\mc G_n&&=\bigcup
  \bigl\{\mc B_k:g(n)\leq f(k)<g(n+1)\bigr\},
  && n\in\Nset,\\
 & \mc B&&=\bigcup_{k\in\Nset}\mc B_k=\bigcup_{n\in\Nset}\mc G_n.
\end{alignat*}
With this notation~\eqref{ShelahM2} reads
\begin{equation}\label{ShelahM22}
  \forall x\in X\,\,\forall^\infty n\,\,\exists G\in\mc G_n\,\,
  x\in G.
\end{equation}
Since each of the families $\mc G_n$ is finite,
it follows that $\mc G_n$'s witness that $\mc B$ is a $\gamma$-groupable
cover of $X$.
Using Lemma~\ref{gammagr} (and Lemma~\ref{lem1}(iv)) it remains to show
that the Hausdorff sum $\sum_{B\in\mc B}h(\diam B)$ is finite.
Note that the cones forming the families $B\in\mc B_k$
are actually balls of radius $2^{-f(k+1)}$,
i.e., $\diam B=2^{-f(k+1)}$ for all $k$ and all $B\in\mc B_k$.
Each of the cones is determined by one $p\in 2^{f(k)}$, therefore
$\abs{\mc B_k}=2^{f(k)}$.
Overall we have
\begin{equation*}
  \sum_{B\in\mc B}h(\diam B)=
  \sum_{k\in\Nset}\sum_{B\in\mc B_k}h(\diam B)=
  \sum_{k\in\Nset}2^{f(k)}\cdot h(2^{-f(k+1)})
  \leq\sum_{k\in\Nset}2^{-k}<\infty.
  \qedhere
\end{equation*}
\end{proof}

The most interesting question raised by Theorems~\ref{thm:action3} and~\ref{polish1}
is of course if the analogue of Conjecture~\ref{CHSMZ} holds for \ssmz.
\begin{question}[CH]
Is it true that if the inclusion $\ssmz(\grG)\subs\MM^*(\grG)$ holds for
a Polish group $\grG$, then $\grG$ is locally compact?
\end{question}
Another open problem is the r\^{o}le of \TSI{} in Theorem~\ref{M-additive}.
\begin{question}
Can the \TSI{} assumption in Theorem~\ref{M-additive} be dropped, or weakened to \CLI?
\end{question}

\subsection*{Continuous images and cartesian products of meager-additive sets}

Since meager-additive sets coincide with \ssmz{} sets in \TSI{} locally compact groups,
they are preserved by continuous mappings
and by cartesian products.
\begin{thm}\label{mapping22}
Let $\grG_1$ be a \TSI{} Polish group and
$\grG_2$ a locally compact Polish group.
Let $f:\grG_1\to\grG_2$ a continuous mapping.
If $X\subs\grG_1$ is $\MM$-additive, then so is $f(X)$.
\end{thm}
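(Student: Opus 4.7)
The plan is to chain three results from the section: Theorem~\ref{polish1} converts $\MM$-additivity in $\grG_1$ into sharp measure zero, then a local-compactness trick carries \ssmz{} across $f$, and finally Theorem~\ref{thm:action3} converts \ssmz{} in $\grG_2$ back to $\MM$-additivity.

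First, since $\grG_1$ is \TSI{} and $X\subs\grG_1$ is $\MM$-additive, Theorem~\ref{polish1} gives $X\in\ssmz(\grG_1)$.

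Second, we need to push \ssmz{} through $f$. Here lies the main obstacle: $f$ is merely continuous, whereas \ssmz{} is only known to be preserved by uniformly continuous mappings (the remark following Theorem~\ref{basicUhnull}). The observation that closes this gap is that every \ssmz{} set is automatically \si totally bounded. Indeed, $\uhm^1(X)=0$ combined with the Method~I definition of $\uhm^1$ gives a countable cover $X\subs\bigcup_{n\in\Nset}E_n$ with $\uhm_0^1(E_n)<\infty$; by Lemma~\ref{lem1}(i) each $E_n$ is totally bounded, and because the invariant metric on the \TSI{} group $\grG_1$ is complete, the closures $K_n=\overline{E_n}$ are compact. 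So $X\subs\bigcup_{n\in\Nset}K_n$ with each $K_n\subs\grG_1$ compact. On each compact $K_n$ the continuous map $f$ is uniformly continuous, hence $f(K_n\cap X)$ is a uniformly continuous image of the \ssmz{} set $K_n\cap X\subs X$ and is therefore itself \ssmz{} in $\grG_2$. Since $\ssmz(\grG_2)$ is a \si ideal (Proposition~\ref{ssigma}), $f(X)=\bigcup_{n\in\Nset}f(K_n\cap X)\in\ssmz(\grG_2)$.

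Third, $\grG_2$ is locally compact, so Theorem~\ref{thm:action3} applied in $\grG_2$ gives $f(X)\in\MM^*(\grG_2)$, as desired. The whole argument is essentially a three-step diagram; the only nontrivial input beyond the quoted theorems is the \si total boundedness of \ssmz{} sets, which is what lets a merely continuous $f$ behave as if it were uniformly continuous on each relevant piece.
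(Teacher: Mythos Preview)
Your proof is correct and matches the paper's intended argument. The paper does not actually write out a proof of Theorem~\ref{mapping22}; it only records the one-line remark preceding it (``Since meager-additive sets coincide with \ssmz{} sets in \TSI{} locally compact groups, they are preserved by continuous mappings\dots'') and leaves the details to the reader. You have supplied exactly those details, and in particular you correctly identified and handled the one point that is not entirely automatic: passing \ssmz{} through a map that is merely continuous by exploiting that \ssmz{} sets are $\sigma$-totally bounded (Lemma~\ref{lem1}(i)), hence lie inside a $\sigma$-compact set in the complete invariant metric on $\grG_1$, so that $f$ is uniformly continuous on each piece.
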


\begin{thm}\label{cart}
Let $\grG_1,\grG_2$ be \TSI{} locally compact Polish groups.
Let $X_1\subs\grG_1$, $X_2\subs\grG_2$.
\begin{enum}
\item If $X_1$ is \smz{} and $X_2$ is $\MM$-additive, then $X_1\times X_2$ is \smz.
\item If $X_1$ and $X_2$ are $\MM$-additive, then so is $X_1\times X_2$.
\end{enum}
\end{thm}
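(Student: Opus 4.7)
\medskip

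The plan is to deduce both parts directly by combining Theorem~\ref{M-additive}, which identifies $\ssmz(\grG)$ with $\MM^*(\grG)$ for \TSI{} locally compact Polish groups, with the metric product theorem~\ref{productHUH} for \smz{} and \ssmz{}. Throughout, fix invariant compatible metrics $d_1,d_2$ on $\grG_1,\grG_2$ (they exist because the groups are \TSI) and equip $\grG_1\times\grG_2$ with the max metric $d=\max(d_1,d_2)$. A routine verification shows that $d$ is invariant, complete, and compatible, so $\grG_1\times\grG_2$ is itself a locally compact \TSI{} Polish group.

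For part~(i), since $X_2$ is $\MM$-additive in the \TSI{} locally compact group $\grG_2$, Theorem~\ref{M-additive} gives $X_2\in\ssmz(\grG_2)$. Combined with $X_1\in\smz(\grG_1)$, Theorem~\ref{productHUH}(ii) yields that $X_1\times X_2$ is \smz{} in the metric space $(\grG_1\times\grG_2,d)$. Since \smz{} in any left-invariant compatible metric on a Polish group coincides with membership in $\smz(\grG_1\times\grG_2)$, we conclude $X_1\times X_2\in\smz(\grG_1\times\grG_2)$.

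For part~(ii), apply Theorem~\ref{M-additive} in each factor to obtain $X_1,X_2\in\ssmz$. Then Theorem~\ref{productHUH}(i) gives $X_1\times X_2\in\ssmz$ in the max metric on $\grG_1\times\grG_2$. Since $\grG_1\times\grG_2$ is a locally compact \TSI{} Polish group, a second application of Theorem~\ref{M-additive}, now in the product group, upgrades \ssmz{} back to $\MM$-additivity, so $X_1\times X_2\in\MM^*(\grG_1\times\grG_2)$.

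The only real content beyond invoking these two theorems is the observation that the class of locally compact \TSI{} Polish groups is closed under finite products, so no genuine obstacle is expected; everything reduces to bookkeeping about metrics. The conceptual point worth flagging is that \ssmz{} serves here as the bridging invariant: it is defined in purely metric (uniform) terms and behaves well under cartesian products by Theorem~\ref{productHUH}, while Theorem~\ref{M-additive} translates between this metric notion and the algebraic notion of meager-additivity, which is \emph{a priori} not obviously product-stable.
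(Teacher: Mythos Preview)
Your proof is correct and follows exactly the approach the paper intends: the paragraph preceding Theorems~\ref{mapping22} and~\ref{cart} states that ``since meager-additive sets coincide with \ssmz{} sets in \TSI{} locally compact groups, they are preserved by continuous mappings and by cartesian products,'' and the theorems are stated without proof as immediate consequences of Theorem~\ref{M-additive} combined with Theorem~\ref{productHUH}. Your write-up simply makes this explicit, including the routine check that the product group is again \TSI{} locally compact.
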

%

\begin{coro}
Let $\grG_1,\grG_2$ be \TSI{} locally compact Polish groups.
Let $X\subs\grG_1\times\grG_2$.
The following are equivalent.
\begin{enum}
\item $X$ is $\MM$-additive,
\item $\proj_1X$ and $\proj_2X$ are $\MM$-additive,
\item $\proj_1X\times\proj_2X$ is $\MM$-additive.
\end{enum}
\end{coro}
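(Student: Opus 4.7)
The plan is to derive all three implications from the tools accumulated in this section, using the fact that $\grG_1\times\grG_2$, equipped with the maximum of invariant metrics on the two factors, is itself a \TSI{} locally compact Polish group. I will prove the cycle (i)$\Rightarrow$(ii)$\Rightarrow$(iii)$\Rightarrow$(i).

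For (i)$\Rightarrow$(ii), the coordinate projections $\proj_1:\grG_1\times\grG_2\to\grG_1$ and $\proj_2:\grG_1\times\grG_2\to\grG_2$ are continuous (group) homomorphisms. Since $\grG_1\times\grG_2$ is \TSI{} and each $\grG_i$ is locally compact, Theorem~\ref{mapping22} applies: the continuous image of an $\MM$-additive set under $\proj_i$ is $\MM$-additive in $\grG_i$. Hence $\proj_1 X$ and $\proj_2 X$ are both $\MM$-additive.

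For (ii)$\Rightarrow$(iii), this is an immediate application of Theorem~\ref{cart}(ii): the cartesian product of two $\MM$-additive sets in \TSI{} locally compact Polish groups is $\MM$-additive in the product group.

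For (iii)$\Rightarrow$(i), note that $X\subs\proj_1 X\times\proj_2 X$, and by Proposition~\ref{sssigma} the family $\MM^*(\grG_1\times\grG_2)$ is a (bi-invariant) \si-ideal, hence closed under subsets. Thus if $\proj_1 X\times\proj_2 X$ is $\MM$-additive, so is $X$. The only point worth verifying is that $\grG_1\times\grG_2$ qualifies for Theorems~\ref{mapping22} and~\ref{cart}, but this is routine: products of locally compact groups are locally compact, and if $d_i$ is a bi-invariant metric on $\grG_i$, then $d((x_1,y_1),(x_2,y_2))=\max\{d_1(x_1,x_2),d_2(y_1,y_2)\}$ is a bi-invariant metric on the product. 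There is no genuine obstacle here; the corollary is a clean bookkeeping consequence of the preceding preservation theorems.
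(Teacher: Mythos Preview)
Your proof is correct and follows exactly the intended route: the paper states this corollary immediately after Theorems~\ref{mapping22} and~\ref{cart} without proof, precisely because it is the straightforward cycle (i)$\Rightarrow$(ii) via continuous images, (ii)$\Rightarrow$(iii) via products, and (iii)$\Rightarrow$(i) via heredity of the ideal $\MM^*$. There is nothing to add.
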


We conclude this section with a few remarks on meager-additive sets in the Cantor set
$\Cset$. There are a few variations of $\MM$-additivity:
A set $S$ in $\Cset$ is
\begin{itemyze}
\item \emph{$\MM$-additive} if $\forall M\in\MM\quad S+M\in\MM$,
\item \emph{sharply $\MM$-additive} if $\forall M\in\MM\
  \exists F\sups S\ F_\sigma\quad F+M\in\MM$,
\item \emph{flatly $\MM$-additive} if $\forall M\in\MM\
  \exists F\sups S\ F_\sigma\quad F+M\neq\Cset$,
\item \emph{$\EE$-additive} if $\forall E\in\EE\ S+E\in\EE$,
\item \emph{sharply $\EE$-additive} if $\forall E\in\EE\
  \exists F\sups S\ F_\sigma\quad F+E\in\EE$.
\end{itemyze}

The question whether $\EE$-additive sets are related to $\MM$-additive sets
are related was posed by Nowik and Weiss~\cite{MR1905154}. Their question
was answered in~\cite{Zin_M-add} by the following theorem.
Let us note that while one would expect that, e.g., the proof of (ii)\Implies(iv)
is a matter of routine, it is actually surprisingly difficult.

\begin{thm}[{\cite{Zin_M-add}}]
Let $S\subs\Cset$. The following properties of $S$ are equivalent.
\begin{enum}
\item $S$ is \ssmz{},
\item $S$ is $\MM$-additive,
\item $S$ is sharply $\MM$-additive,
\item $S$ is flatly $\MM$-additive,
\item $S$ is $\EE$-additive,
\item $S$ is sharply $\EE$-additive.
\end{enum}
\end{thm}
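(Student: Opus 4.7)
The plan is to chase a cycle through the six properties, using Theorem~\ref{MeShelah} as the pivot between the ``meager'' half (i)--(iv) and the ``$\EE$'' half (v)--(vi).

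For (i)--(iv): (i)$\equival$(ii) is Theorem~\ref{MeShelah}; (i)$\Rightarrow$(iii) is obtained by the $F_\sigma$-hull construction already present inside the proof of Theorem~\ref{thm:action3}, which reads off from a $\gamma$-groupable cover of $S$ (Theorem~\ref{besic2}) an $F_\sigma$ set $\widehat S\sups S$ with $\widehat S+M\in\MM$; both (iii)$\Rightarrow$(ii) and (iii)$\Rightarrow$(iv) are trivial; and for (iv)$\Rightarrow$(i), the Galvin--Mycielski--Solovay Theorem~\ref{kysiak} applied in the locally compact group $\Cset$ first yields that $S$ is \smz, which one upgrades to \ssmz{} by observing that the $F_\sigma$ witnesses $F_M$ themselves are \smz{} and that $F_\sigma$ \smz{} coincides with \ssmz{} in $\Cset$ (write $F=\bigcup K_n$ with $K_n$ compact, apply Lemma~\ref{lem1}(iii) to get $\uhm^h(K_n)=\hm^h(K_n)=0$ for every gauge $h$, and invoke Proposition~\ref{ssigma}).

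For (v) and (vi): (vi)$\Rightarrow$(v) is immediate. For (i)$\Rightarrow$(vi), I would adapt the construction of Theorem~\ref{thm:action3} by replacing ``nowhere dense'' by ``compact null'' throughout. Regularity of the Haar measure $\hm^1$ on the compact group $\Cset$ yields a measure-theoretic analogue of Lemma~\ref{comp} (small balls inside an open set avoid a translate of a prescribed compact null), and the same inductive construction produces an $F_\sigma$ hull $\widehat S\sups S$ whose sum with the given $E\in\EE$ is an $F_\sigma$ set of arbitrarily small Haar measure, hence in $\EE$.

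The main obstacle is (v)$\Rightarrow$(i). The natural Hausdorff-measure route via Theorem~\ref{prodUhnull}(iii) and Lemma~\ref{howroyd} fails because the sum map $(x,e)\mapsto x+e$ is only $1$-Lipschitz onto $\Cset$, giving $\hm^1(S+E)\le\hm^1(S\times E)$ in the wrong direction to deduce $\uhm^1(S\times E)=0$ from $S+E\in\EE$. The plan is therefore to prove (v)$\Rightarrow$(ii) directly through Shelah's combinatorial criterion Lemma~\ref{ShelahM}: given $f\in\UPset$, build a compact null set $P_f\subs\Cset$ whose defining tree is aligned with the intervals $[f(k),f(k+1))$ (a controlled thinning of the $C_I$ sets of Lemma~\ref{EC}); apply $\EE$-additivity of $S$ to $P_f$ to obtain an $F_\sigma$ null superset $N\sups S+P_f$; and then use a density/pigeonhole argument inside the tree of $P_f$ to extract the witnesses $g\in\Pset$ and $y\in\Cset$ verifying Shelah's criterion for $S$. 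Once (v)$\Rightarrow$(ii) is in hand, the cycle closes via Theorem~\ref{MeShelah}.
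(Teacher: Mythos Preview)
The paper does not include a proof of this theorem; it is stated with a citation to \cite{Zin_M-add}, so there is no in-paper argument to compare against. Evaluating your plan on its own merits:

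Your cycle through (i)--(iv) breaks at (iv)$\Rightarrow$(i). You correctly note that flat $\MM$-additivity yields $S+M\neq\Cset$ for every meager $M$, so Theorem~\ref{kysiak} gives that $S$ is \smz. But the upgrade to \ssmz{} does not work as written: the $F_\sigma$ witness $F_M$ depends on $M$, and all you know about it is $F_M+M\neq\Cset$ for \emph{that particular} $M$. Galvin--Mycielski--Solovay would require $F_M+M'\neq\Cset$ for \emph{every} meager $M'$ to conclude that $F_M$ is \smz, and you have no such information. So you cannot deduce that any single $F_M$ is \smz, and hence you cannot produce an $F_\sigma$ \smz{} set containing $S$. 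This is a genuine gap; closing (iv)$\Rightarrow$(i) (or (iv)$\Rightarrow$(ii)) is real content, not bookkeeping. The paper itself flags that even the direction (ii)$\Rightarrow$(iv) is ``surprisingly difficult'', which should warn you that the (ii)--(iv) block is not soft.

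The remaining pieces of your plan are sound or at least plausibly sketched: (i)$\Leftrightarrow$(ii) is Theorem~\ref{MeShelah}; (i)$\Rightarrow$(iii) does follow from the $\widehat S$-construction inside the proof of Theorem~\ref{thm:action3}; (iii)$\Rightarrow$(iv) and (vi)$\Rightarrow$(v) are trivial. For (i)$\Rightarrow$(v) there is in fact a shorter route than adapting Theorem~\ref{thm:action3}: Theorem~\ref{prodUhnull}(iii) gives $\uhm^1(S\times E)=0$ for every $E\in\EE$, and since addition $\Cset\times\Cset\to\Cset$ is $1$-Lipschitz this forces $S+E$ into a $\sigma$-compact $\hm^1$-null set, i.e., $S+E\in\EE$. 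Your sketch for (v)$\Rightarrow$(ii) via Lemma~\ref{ShelahM} is a reasonable line of attack but, as you acknowledge, is the hard step and is only an outline here.
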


The definitions of these notions extend in a straightforward way to
Polish groups (or in case $\EE$ is considered, locally copact Polish groups).
However, the proofs of the above theorem
depends very much on the fine combinatorial structure of $\Cset$
and are thus not easily transferable to a context of a Polish group.
With quite some effort the equivalence
(i)\equival(ii)\equival(iii)\equival(iv) in \TSI{} locally compact Polish groups
was proved in~\cite{Zin_M-additive}.
(The equivalence (i)\equival(ii) is presented in this section as
Theorem~\ref{M-additive}.) But the equivalences including $\EE$ are still not
understood.

\begin{question}[{\cite{Zin_M-additive}}]
Let $\grG$ be a locally compact \TSI{} Polish group.
\begin{enum}
\item Is every $\MM$-additive set in $\grG$ $\EE$-additive?
\item Is every $\EE$-additive set in $\grG$ $\MM$-additive?
\item Is every $\EE$-additive set in $\grG$ sharply $\EE$-additive?
\end{enum}
\end{question}

\section{Uniformity number of meager-additive sets}
\label{sec:uniformity}

Since the notion of sharp measure zero is rather new, not much is known about
the cardinal invariants of the \si ideal of \ssmz{} sets.
We will investigate only the uniformity number of \ssmz{} for metric
spaces and Polish groups. We refer to section \ref{sec:cardinal} for the
notation.

Bartoszy\'nski \cite{MR1350295} and Pawlikowski \cite{MR776210}
investigated and calculated a related cardinal --
the uniformity number of the ideal of meager-additive sets in $\Cset$.
This cardinal invariant is termed \emph{transitive additivity of $\MM$}
and denoted by $\addT$. By \cite[2.7.14]{MR1350295}, $\addT=\eqq$.

The two results on Polish groups, Theorem~\ref{invariants} and
Corollary~\ref{invariants2}, come from \cite{Zin_M-add}.
The other results of this section are new.

\begin{thm}\label{thm:Roth2}
Let $X$ be a separable metric space that is not \ssmz{}.
\begin{enum}
\item $\nonSS \Pset=\addM$,
\item $\nonSS \Cset=\eqq$,
\item $\addM\leq\nonSS X$,
\item if $X$ is \si totally bounded, then $\eqq\leq\nonSS X$,
\item if $X$ is not of universal measure zero, then $\nonSS X\leq\nonN$.
\end{enum}
\end{thm}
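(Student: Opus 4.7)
The proof follows the pattern of Theorem~\ref{thm:Roth}, the only new feature being that covers must be $\gamma$-groupable (Theorem~\ref{besic2}), which promotes the relevant cardinals from $\covM$, $\eq$ to $\addM$, $\eqq$. For (v): every \ssmz{} set is \smz{} (Lemma~\ref{lem1}(iv)), hence $\nonSS X\leq\nonS X$; the hypothesis combined with Proposition~\ref{prop:UMZ} ensures $X$ is not \smz{}, so Theorem~\ref{thm:Roth}(ii) gives $\nonS X\leq\nonN$.

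For (iii), I mimic the proof of Theorem~\ref{thm:Roth}(i), using the identity $\addM=\min\{\bbb,\eqq\}$ recorded in the excerpt. Given $F\subs X$ with $|F|<\addM$ and a sequence $\seq{\eps_n}$, fix a countable dense $\{d_k:k\in\Nset\}$ in $X$ and, for each $x\in F$, let $\pi_x(n)$ be the least $k$ with $x\in B(d_k,\eps_n/2)$. Since $|F|<\bbb$, the family $\{\pi_x:x\in F\}\subs\Pset$ is bounded; since also $|F|<\eqq$, the defining property of $\eqq$ fails for this bounded family, producing $g\in\Pset$ and $h\in\UPset$ such that for every $x\in F$ and all but finitely many $n$, $\pi_x(k)=g(k)$ (equivalently $x\in B(d_{g(k)},\eps_k/2)$) for some $k\in[h(n),h(n+1))$. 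Setting $U_k=B(d_{g(k)},\eps_k/2)$ with witnessing partition $\{[h(n),h(n+1)):n\in\Nset\}$ yields a $\gamma$-groupable cover with $\diam U_k\leq\eps_k$, and Theorem~\ref{besic2} concludes.

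For (iv), I first verify $\eqq\leq\nonSS\Cset$ directly. For $F\subs\Cset$ with $|F|<\eqq$ and $\seq{\eps_n}$, choose $h\in\UPset$ with $2^{-h(n)}\leq\eps_n$ and encode each $x\in F$ by $f_x(n)=m$ iff $x\rest h(n)=s^{h(n)}_m$; then $\{f_x\}$ is bounded by $n\mapsto 2^{h(n)}$ and of size $<\eqq$, so the $\eqq$-property fails and yields $g,h'\in\Pset$ for which $U_k=\cyl{s^{h(k)}_{g(k)}}$ grouped by $[h'(n),h'(n+1))$ is the required $\gamma$-groupable cover. For general \si totally bounded $X=\bigcup_n K_n$, each completion $\hat K_n$ is compact and hence a uniformly continuous image of $\Cset$; since \ssmz{} is preserved by uniformly continuous maps, the \ssmz{} analogue of Lemma~\ref{lem:Best}(i) holds, and a non-\ssmz{} $F\subs X$ meets some $K_n$ in a non-\ssmz{} set (Proposition~\ref{ssigma}), giving $|F\cap K_n|\geq\nonSS\hat K_n\geq\nonSS\Cset\geq\eqq$.

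For (i) and (ii) only the matching upper bounds remain. For $\nonSS\Cset\leq\eqq$: take a bounded $F\subs\Pset$ witnessing $\eqq$ (bounded by $h'$, $|F|=\eqq$), set $x_f=s^{h'(0)}_{f(0)}\concat s^{h'(1)}_{f(1)}\concat\cdots\in\Cset$, and show $\{x_f:f\in F\}$ is not \ssmz{} by decoding any $\gamma$-groupable cylindrical cover into a pair $(g,h)$ refuting the $\eqq$-property of $F$. For $\nonSS\Pset\leq\addM=\min\{\bbb,\covM\}$ I combine $\nonSS\Pset\leq\covM$ (from $\ssmz\subs\smz$ and Theorem~\ref{thm:Roth}(iii)) with $\nonSS\Pset\leq\bbb$; the latter holds because an unbounded $F\subs\Pset$ is never \ssmz{}: from a $\gamma$-groupable cylindrical cover $\{\cyl{s_n}:n\in\Nset\}$ with witnessing families $\mc G_j$, pick $j(k)$ with $\min\{|s_n|:n\in\mc G_{j(k)}\}>k$ and set $g(k)=\max\{s_n(k):n\in\mc G_{j(k)}\}$; for each $f\in F$, eventually some $s_n\subs f$ with $n\in\mc G_{j(k)}$, forcing $f(k)\leq g(k)$ and contradicting unboundedness. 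The principal obstacle is the decoding step in the upper bound for $\Cset$: a cover cylinder need not respect the block structure $[H(i),H(i+1))$ (with $H(i)=\sum_{j<i}h'(j)$) used to build $x_f$, so before extracting $(g,h)$ one must truncate each cover cylinder to align with these blocks and simultaneously align the \ssmz{} witnessing partition with them.
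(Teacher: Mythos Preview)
Your proof is correct and takes a genuinely different route from the paper's. The paper proves (i) and (ii) first, by invoking the hard equivalence $\ssmz=\MM^*$ on locally compact \TSI{} groups (Theorems~\ref{thm:action3} and~\ref{polish1}) together with Bartoszy\'nski's identification $\addT=\eqq$; it then deduces (iii) from (i) via a Lipschitz coding $x\mapsto\widehat x$ into $\Pset$, and (iv) from (ii) via dyadicity. You instead argue everything directly from the combinatorial characterization of $\ssmz$ in Theorem~\ref{besic2}, exactly paralleling the proofs of Theorem~\ref{thm:Roth}: this avoids Theorems~\ref{thm:action3}, \ref{polish1} and the $\addT=\eqq$ computation entirely, and yields (iii) and (iv) first, with the lower bounds in (i) and (ii) falling out as special cases. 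Your approach is thus more elementary and self-contained; the paper's buys a shorter proof at the cost of heavy prior machinery.

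Two minor remarks. First, your ``principal obstacle'' for the upper bound $\nonSS\Cset\leq\eqq$ dissolves if you simply take $\eps_n=2^{-H(n+1)}$: then each cover cylinder $\cyl{t_n}$ has $|t_n|\geq H(n+1)$, so block~$n$ lies entirely within $\dom t_n$, and you may read off $g(n)$ from $t_n\rest[H(n),H(n+1))$ and set $h(j)=\min I_j$ for the witnessing intervals $I_j$; no truncation or realignment is needed. Second, your argument for $\nonSS\Pset\leq\bbb$ works but is harder than necessary: the paper just observes that an unbounded subset of $\Pset$ is not \si totally bounded (since compact subsets of $\Pset$ are bounded and a diagonal bound handles the countable union), hence not $\ssmz$ by Lemma~\ref{lem1}(i),(viii).
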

\begin{proof}
(i)
Recall that the metric on $\Pset$ is the least difference metric
given by $d(f,g)=2^{-n(f,g)}$, where $n(f,g)=\min\{n:f(n)\neq g(n)\}$.

Let $S\subs\Pset$ be an unbounded set such that $\abs X=\bbb$.
Since $X$ is not bounded, it is not \si totally bounded
and in particular it is not \ssmz.
It follows that $\non\ssmz(\Pset)\leq\bbb$.

Theorem \ref{thm:Roth}(iii) yields a set $S\subs\Pset$ that is not $\smz$
and $\abs S=\cov\MM$. This set is clearly not $\ssmz$ and thus
$\non\ssmz(\Pset)\leq\cov\MM$.

It follows that $\non\ssmz(\Pset)\leq\min(\cov\MM,\bbb)$. By a theorem of
Miller~\cite{MR613787}
$\min(\cov\MM,\bbb)=\addM$ (see also \cite[2.2.9]{MR1350295}).
Thus we have $\non\ssmz(\Pset)\leq\addM$.

In the other direction, suppose that $S\subs\Pset$ is not \ssmz{}.
By Theorem~\ref{polish1}, $S$
is not meager-additive set in the group $\ZZset$.
Therefore there is a meager set $M\subs\ZZset$ such that
$S+M=\bigcup_{s\in S}(M+s)$ is not meager and hence
$\abs S\geq\addM$. Thus $\non\ssmz(\Pset)\geq\addM$.

(ii)
By Theorem~\ref{thm:action3}, a set $S\subs\Cset$ is \ssmz{} if and only if
it is meager-additive. Thus $\non\ssmz(\Cset)=\addT$ and the latter equals
by the aforementioned result of Bartoszy\'nski to $\eqq$.

(iii)
Let $\{z_m:m\in\Nset\}\subs X$ be a dense set in $X$.
To each $x\in X$ assign a function $\widehat x\in\Pset$ defined by
\begin{equation}\label{sbpmap}
  \widehat x(n)=\min\{m:d(x,z_m)<2^{-n}\}.
\end{equation}
We claim that the inverse map $\widehat x\mapsto x$ is Lipschitz. Indeed,
if $d(\widehat x,\widehat y)=2^{-n}$, then $\widehat x(n-1)=\widehat y(n-1)$
and in particular $\exists m\ d(x,z_m)<2^{-n+1}$ and $d(y,z_m)<2^{-n+1}$.
Therefore $d(x,y)<2\cdot 2^{-n+1}$, whence $d(x,y)<4d(\widehat x,\widehat y)$.
In particular $\widehat x\mapsto x$ is uniformly continuous.
So if $S\subs X$ is not \ssmz, then $\widehat S=\{\widehat x:x\in S\}$
is not \ssmz{} as well. If follows that $\nonSS X\geq\nonSS\Pset$
and (iii) follows from (i).


(iv)
Consider the completion $X^*$ of $X$. Since $X$ is \si totally bounded, there
is a \si compact set $K\subs X^*$ that contains $X$. Let
$K_n\upto K$ be a sequence of compact sets. Suppose that $S\subs X$ is
a not $\ssmz$ set such that $\abs S=\nonSS X$.
There is $n$ such that $S\cap K_n$ is not $\ssmz$, therefore
$\abs{S\cap K_n}=\nonSS X$. Since $K_n$ is compact, it is \emph{dyadic}:
there is a uniformly continuous mapping $f:\Cset\to K_n$ onto $K_n$.
For each $x\in S\cap K_n$ pick $\widehat x\in f^{-1}(x)$ and set
$\widehat S=\{\widehat x:x\in S\cap K_n\}$.
Then $f(\widehat S)=S\cap K_n$, hence $\widehat S$ is
not $\ssmz$, and clearly $\abs{\widehat S}=\abs{S\cap K_n}=\nonSS X$.
It follows that $\nonSS\Cset\leq\nonSS X$ whence $\addT\leq\nonSS X$ by (ii).

(v) is a trivial consequence of Theorem~\ref{thm:Roth}(ii).
\end{proof}

As expected, for analytic metric spaces we can do better.
Recall that a metric space has the \emph{small ball property}
if it admits a base $\{B_n\}$ such that $\diam B_n\to 0$.
This notion is due to Behrends and Kadec \cite{MR1880727}.
We refer to \cite{MR3453581} for more information.

\begin{thm}\label{thm:Roth3}
Let $X$ be an uncountable analytic metric space.
\begin{enum}
\item $\addM\leq\nonSS X\leq\eqq$,
\item if $X$ is \si totally bounded, then $\nonSS X=\eqq$,
\item if $X$ does not have the small ball property, then $\nonSS X=\addM$.
\end{enum}
\end{thm}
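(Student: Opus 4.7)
The plan is to reduce all three parts to Theorem~\ref{thm:Roth2} by exhibiting suitable uniform embeddings of $\Cset$ or $\Pset$ into $X$. Throughout, note that since $X$ is uncountable analytic it contains, by the classical perfect set theorem, a compact perfect subset; such a subset is not \ssmz{} (it carries sets of positive $\hm^h$ for some gauge $h$), so $X$ itself is not \ssmz{} and the invariant $\nonSS X$ is meaningful.

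\textbf{Lower bounds.} In all three parts, $\addM \leq \nonSS X$ is a direct application of Theorem~\ref{thm:Roth2}(iii). In part~(ii) the sharper bound $\eqq \leq \nonSS X$ is exactly Theorem~\ref{thm:Roth2}(iv), whose hypothesis is $\si$-total boundedness.

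\textbf{Upper bound in (i).} Let $\iota:\Cset\to X$ be a topological embedding obtained from the perfect set theorem. Since $\Cset$ is compact, both $\iota$ and $\iota^{-1}:\iota(\Cset)\to\Cset$ are uniformly continuous, so $\iota$ is a uniform embedding. By Theorem~\ref{thm:Roth2}(ii) there exists $S\subs\Cset$ with $\abs{S}=\eqq$ that is not \ssmz. Since \ssmz{} is preserved by uniformly continuous maps (a consequence of Theorem~\ref{basicUhnull}), were $\iota(S)$ \ssmz, then $S=\iota^{-1}(\iota(S))$ would be as well. Hence $\iota(S)$ is a non-\ssmz{} subset of $X$ of size $\eqq$, giving $\nonSS X\leq\eqq$.

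\textbf{Part (ii)} then follows by combining the two halves.

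\textbf{Upper bound in (iii).} The crucial input is a uniform embedding theorem: \emph{every uncountable analytic metric space $X$ without the small ball property admits a uniform embedding of $\Pset$}. Assuming such $\iota:\Pset\to X$, apply Theorem~\ref{thm:Roth2}(i) to pick a non-\ssmz{} set $S\subs\Pset$ with $\abs{S}=\addM$; the same argument as in~(i)--using uniform continuity of $\iota^{-1}$ on $\iota(\Pset)$--shows $\iota(S)$ is a non-\ssmz{} subset of $X$, whence $\nonSS X\leq\addM$. Combined with the lower bound this yields equality.

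\textbf{Main obstacle.} The real work is the uniform embedding $\Pset\hookrightarrow X$ for analytic $X$ that fails SBP. This is the analogue of Lemma~\ref{hur} for the SBP setting and should be extracted from the techniques of~\cite{MR3453581,MR1880727}. The sketch: failure of SBP means there is $\eps_0>0$ such that no basic open set can be covered by any base whose diameters go to zero; by a straightforward recursion, inside every nonempty open set one finds a scale $\del>0$ and infinitely many points that are pairwise $\del$-separated. Mimicking the tree construction of Lemma~\ref{hur}, choose along each node $s\in\Nset^{<\Nset}$ a point $x_s$ and a scale $\del_{|s|}$ so that the children at level $n$ lie in an $\eps_n$-ball around $x_s$ but are at least $5\eps_{n+1}$ apart. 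The analyticity (rather than completeness) of $X$ is the subtle point: one performs the construction inside a continuous surjection $\pi:\Pset\to X$ and then pushes it forward. Left-invariance in the group case is replaced by the triangle inequality, so the uniform bi-continuity of the resulting $\iota$ follows from the geometric separations.
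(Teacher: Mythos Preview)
Your arguments for parts (i) and (ii) are correct and match the paper's proof essentially verbatim: the lower bounds come from Theorem~\ref{thm:Roth2}(iii),(iv), and the upper bound $\eqq$ from the uniformly embedded copy of $\Cset$.

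Part (iii), however, has a genuine gap. Your strategy is to produce a uniform embedding $\Pset\hookrightarrow X$, but the sketch you give does not work. The intermediate claim---that failure of SBP implies that \emph{inside every nonempty open set} one finds infinitely many $\delta$-separated points for some fixed $\delta$---is false: it would force every open subset of $X$ to be non--totally bounded, which is strictly stronger than failure of SBP. For instance, $\Pset\times[0,1]$ fails SBP yet contains totally bounded open sets of the form $\{f\}\times(a,b)$. So the tree construction you outline cannot be carried out as stated, and it is not clear the desired uniform embedding exists for arbitrary analytic $X$ failing SBP. (The analyticity manoeuvre via a surjection $\pi:\Pset\to X$ does not help either: pushing forward a tree in $\Pset$ gives no control over the metric on the $X$ side.)

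The paper's argument avoids this obstacle entirely and is considerably simpler. It reuses the map $x\mapsto\widehat x$ from the proof of Theorem~\ref{thm:Roth2}(iii) (whose inverse is Lipschitz) and invokes the characterisation from~\cite[4.4]{MR3453581} that failure of SBP is equivalent to $\widehat X=\{\widehat x:x\in X\}$ being \emph{dominating} in $\Pset$. One then takes an unbounded family $B\subs\Pset$ of size $\bbb$, and for each $f\in B$ picks $x_f\in X$ with $\widehat{x_f}\geq^*f$. The resulting set $S=\{x_f:f\in B\}\subs X$ has $\widehat S$ unbounded, hence $S$ is not $\sigma$-totally bounded and in particular not \ssmz. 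This gives $\nonSS X\leq\bbb$; together with $\nonSS X\leq\eqq$ from~(i) and the identity $\addM=\min\{\bbb,\eqq\}$, one obtains $\nonSS X\leq\addM$. No embedding of $\Pset$ is needed, and analyticity plays no role in this part.
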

\begin{proof}
(i) The left-hand inequality is Theorem~\ref{thm:Roth2}(iii).
Since $X$ contains a (uniform) copy of $\Cset$, Theorem~\ref{thm:Roth2}(ii)
yields $\eqq=\nonSS\Cset\geq\non\ssmz(\grG)$.

(ii) follows from (i) and Theorem~\ref{thm:Roth2}(iv).

(iii) Consider the mapping $x\mapsto\widehat x$ defined by \eqref{sbpmap}.
Let $B\subs\Pset$ be an unbounded set such that $\abs B=\bbb$.
As shown in~\cite[4.4]{MR3453581}, the set $\widehat X=\{\widehat x:x\in X\}$
is dominating in $\Pset$. Therefore for
each $f\in B$ there is $x_f\in X$ such that $\widehat x_f\geq^*f$.
Set $S=\{x_g:f\in B\}$. Then $\widehat S$ is not bounded, because
it dominates $B$. It follows that $S$ is not \si totally bounded in $X$,
and in particular it is not \ssmz.
Since clearly $\abs{\widehat S}=\abs B=\bbb$, we conclude that
$\nonSS X\leq\bbb$. By (i) also $\nonSS X\leq\addM$, so
$\nonSS X\leq\min(\eqq,\bbb)=\addM$, and the reverse inequality follows from~(i).
\end{proof}

We will now calculate uniformity numbers of $\ssmz(\grG)$ for \CLI{} Polish groups
and of $\MM^*(\grG)$ for \TSI{} Polish groups.

\begin{thm}[{\cite{Zin_M-add}}]\label{invariants}
Let $\grG$ be a \CLI{} Polish group.
\begin{enum}
\item If $\grG$ is locally compact, then $\nonSS\grG=\eqq$.
\item If $\grG$ is not locally compact, then $\nonSS\grG=\addM$.
\end{enum}
\end{thm}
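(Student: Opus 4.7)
The plan is to handle the two cases separately, leveraging in both the fact that \ssmz{} is a uniform invariant, so the exact analogue of Lemma~\ref{lem:Best} holds for \ssmz: a uniformly continuous image of a \ssmz{} set is \ssmz, and if $X$ uniformly embeds into $Y$ then $\nonSS Y\leq\nonSS X$. One should assume $\grG$ uncountable (otherwise $\nonSS\grG$ is trivial), which also ensures $\grG$ is not \ssmz, since any uncountable Polish space contains a copy of $\Cset$ and hence a non-\ssmz{} subset.

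For (i), $\grG$ is $\sigma$-compact, hence $\sigma$-totally bounded, and uncountable analytic, so Theorem~\ref{thm:Roth3}(ii) applies at once and gives $\nonSS\grG=\eqq$. A self-contained argument is also immediate: the lower bound $\eqq\leq\nonSS\grG$ is Theorem~\ref{thm:Roth2}(iv); for the upper bound write $\grG=\bigcup_{n\in\Nset}K_n$ with each $K_n$ compact, invoke Proposition~\ref{ssigma} (the $\sigma$-ideal property of $\ssmz(\grG)$) to find, inside any non-\ssmz{} set $S\subs\grG$, an index $n$ with $S\cap K_n$ not \ssmz, and then lift this set through the standard uniformly continuous surjection $\Cset\twoheadrightarrow K_n$ to a non-\ssmz{} subset of $\Cset$ of the same cardinality; Theorem~\ref{thm:Roth2}(ii) then yields $\nonSS\grG\leq\nonSS\Cset=\eqq$.

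For (ii), Lemma~\ref{hur} furnishes a uniform embedding $\Pset\hookrightarrow\grG$, so the uniform-invariance of \ssmz{} gives $\nonSS\grG\leq\nonSS\Pset=\addM$ by Theorem~\ref{thm:Roth2}(i). Conversely, $\grG$ is separable metric and not \ssmz, so Theorem~\ref{thm:Roth2}(iii) supplies the reverse inequality $\addM\leq\nonSS\grG$. Combining the two bounds yields $\nonSS\grG=\addM$.

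\textbf{Where the work is.} Almost all of the mathematical content has been front-loaded into Theorem~\ref{thm:Roth2}, Lemma~\ref{hur}, and the uniform-invariance of \ssmz; the present theorem is essentially an assembly argument. The only subtle point is ensuring that the spaces to which one applies Theorem~\ref{thm:Roth2}(iii)--(iv) are genuinely not \ssmz, which is guaranteed in (i) by $\sigma$-total-boundedness together with uncountability (so $\grG$ carries a compact uncountable subset, hence contains a non-\ssmz{} copy of $\Cset$) and in (ii) by the uniform copy of $\Pset$ supplied by Lemma~\ref{hur}, which is not \si totally bounded and hence not \ssmz.
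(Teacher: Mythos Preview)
Your proof is correct and follows essentially the same route as the paper: part (i) is immediate from Theorem~\ref{thm:Roth3}(ii), and part (ii) combines the uniform copy of $\Pset$ in $\grG$ (Lemma~\ref{hur}) with Theorem~\ref{thm:Roth2}(i) for the upper bound and Theorem~\ref{thm:Roth2}(iii) (equivalently the left inequality of Theorem~\ref{thm:Roth3}(i)) for the lower bound. Your added remarks on why $\grG$ is not \ssmz{} and your correct citation of Lemma~\ref{hur} are, if anything, slightly more careful than the paper's own write-up.
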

\begin{proof}
(i) is a straightforward from Theorem~\ref{thm:Roth3}(ii).

(ii) By Lemma~\ref{lem:Best}, $\grG$ contains a uniform copy of $\Pset$.
Therefore $\nonSS\grG\leq\nonSS\Pset$. Now apply Theorem~\ref{thm:Roth2}(i)
to get $\nonSS\grG\leq\nonM$. The reverse inequality is straightforward from
Theorem~\ref{thm:Roth3}(i).
\end{proof}

The following is a trivial consequence of this theorem and Theorem~\ref{thm:action3}.
\begin{coro}[{\cite{Zin_M-add}}]\label{invariants2}
Let $\grG$ be a \TSI{} Polish group.
\begin{enum}
\item If $\grG$ is locally compact, then $\MM^*(\grG)=\eqq$.
\item If $\grG$ is not locally compact, then $\MM^*(\grG)=\addM$.
\end{enum}
\end{coro}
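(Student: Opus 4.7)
The statement is a direct corollary of Theorem~\ref{invariants}, once we translate between the ideals $\ssmz(\grG)$ and $\MM^*(\grG)$. (I read the right-hand sides as $\non(\MM^*(\grG))$; the corollary is about uniformity.) The two key inputs are Theorem~\ref{polish1}, which gives $\MM^*(\grG)\subseteq\ssmz(\grG)$ in every \TSI{} Polish group, and Theorem~\ref{thm:action3} (i.e.\ Theorem~\ref{M-additive}), which gives the reverse inclusion under local compactness.

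\textbf{Part (i).} Assume $\grG$ is locally compact and \TSI. By Theorems~\ref{polish1} and~\ref{thm:action3} we have the equality of ideals $\MM^*(\grG)=\ssmz(\grG)$. Hence $\non(\MM^*(\grG))=\nonSS\grG$, and the right-hand side equals $\eqq$ by Theorem~\ref{invariants}(i), since every \TSI{} Polish group is \CLI{}.

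\textbf{Part (ii).} Assume $\grG$ is \TSI{} and not locally compact. For the upper bound, the inclusion $\MM^*(\grG)\subseteq\ssmz(\grG)$ from Theorem~\ref{polish1} yields $\non(\MM^*(\grG))\leq\nonSS\grG$, and by Theorem~\ref{invariants}(ii) the latter equals $\addM$. For the lower bound, I would argue that every set $S\subseteq\grG$ with $\abs{S}<\addM$ is already meager-additive: if $M\subseteq\grG$ is meager, then each left translate $sM$ is meager (left multiplication is a homeomorphism), so $S\cdot M=\bigcup_{s\in S}sM$ is a union of fewer than $\addM$ meager subsets of $\grG$, hence meager. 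This uses the standard fact that $\add(\MM(\grG))=\addM$ for every uncountable Polish space, which follows for instance from Bartoszy\'nski's characterization $\addM=\min(\bbb,\covM)$ together with the invariance of $\bbb$ and $\covM$ across uncountable Polish spaces. Consequently $\non(\MM^*(\grG))\geq\addM$, and combining with the upper bound gives equality.

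The real work has already been done in Theorems~\ref{polish1} and~\ref{invariants}; the present corollary is a bookkeeping argument. The one substantive point to justify is the equality $\add(\MM(\grG))=\addM$ in an arbitrary Polish group, but this is standard and requires no further ingredients beyond translation-invariance of $\MM(\grG)$ and the classical invariance of $\addM$ under passage between uncountable Polish spaces.
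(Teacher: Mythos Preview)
Your argument is correct and follows the same approach the paper intends: deduce the corollary from Theorem~\ref{invariants} by comparing the ideals $\MM^*(\grG)$ and $\ssmz(\grG)$. In fact you are more careful than the paper's one-line justification, which cites only Theorem~\ref{thm:action3}; you correctly observe that Theorem~\ref{polish1} is needed for the inclusion $\MM^*(\grG)\subseteq\ssmz(\grG)$ (hence the upper bound in~(ii)), and you make explicit the elementary lower bound $\non(\MM^*(\grG))\geq\add(\MM(\grG))=\addM$, which the paper leaves implicit.
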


We conclude with a simple argument that shows that $\ssmz$
is consistently not a topological property.
The Baer-Specker group $\ZZset$ is a \TSI{} Polish group.
On the other hand, it is homeomorphic to the set
of irrational numbers, so regard it as a subset of $\Rset$.
By Theorem~\ref{invariants}(ii) there is a set $X\subs\ZZset$ such that
$\abs X=\addM$
and that is not $\ssmz$ in the invariant metric.
On the other hand, if $\addM<\add^*\MM$, then $X$ is by Theorem~\ref{invariants}(i)
$\ssmz$ in the metric of the real line. Since $\addM<\add^*\MM$
is relatively consistent with ZFC (as proved by Pawlikowski~\cite{MR776210}),
$X$ is consistently a set that is $\ssmz$ is one metric on $\ZZset$
and not $\ssmz$ in another homeomorphic metric.

\bibliographystyle{amsplain}
\bibliography{smz,M-add,M-additive}

\providecommand{\bysame}{\leavevmode\hbox to3em{\hrulefill}\thinspace}
\providecommand{\MR}{\relax\ifhmode\unskip\space\fi MR }
\providecommand{\MRhref}[2]{%
  \href{http://www.ams.org/mathscinet-getitem?mr=#1}{#2}
}
\providecommand{\href}[2]{#2}
\begin{thebibliography}{10}

\bibitem{MR1350295}
Tomek Bartoszy{\'n}ski and Haim Judah, \emph{Set theory}, A K Peters Ltd.,
  Wellesley, MA, 1995, On the structure of the real line. \MR{1350295
  (96k:03002)}

\bibitem{MR823775}
James~E. Baumgartner, \emph{Iterated forcing}, Surveys in set theory, London
  Math. Soc. Lecture Note Ser., vol.~87, Cambridge Univ. Press, Cambridge,
  1983, pp.~1--59. \MR{823775}

\bibitem{MR1880727}
Ehrhard Behrends and Vladimir~M. Kadets, \emph{Metric spaces with the small
  ball property}, Studia Math. \textbf{148} (2001), no.~3, 275--287.
  \MR{1880727 (2003b:54032)}

\bibitem{MR1555386}
A.~S. Besicovitch, \emph{Concentrated and rarified sets of points}, Acta Math.
  \textbf{62} (1933), no.~1, 289--300. \MR{1555386}

\bibitem{MR1555389}
\bysame, \emph{Correction}, Acta Math. \textbf{62} (1933), no.~1, 317--318.
  \MR{1555389}

\bibitem{MR1504785}
E.~Borel, \emph{Sur la classification des ensembles de mesure nulle}, Bull.
  Soc. Math. France \textbf{47} (1919), 97--125. \MR{1504785}

\bibitem{cardona}
Miguel~A. Cardona, \emph{Yorioka's characterization of the cofinality of the
  strong measure zero ideal and its independency from the continuum}, Submitted
  April 2019.

\bibitem{mejiaetal}
Miguel~A. Cardona, Diego~A. Mejía, and Ismael E.~Rivera-Madrid, \emph{The
  covering number of the strong measure zero ideal can be above almost
  everything else}, Submitted on 5 Feb 2019.

\bibitem{MR1139474}
Timothy~J. Carlson, \emph{Strong measure zero and strongly meager sets}, Proc.
  Amer. Math. Soc. \textbf{118} (1993), no.~2, 577--586. \MR{1139474
  (94b:03086)}

\bibitem{MR982239}
Paul Corazza, \emph{The generalized {B}orel conjecture and strongly proper
  orders}, Trans. Amer. Math. Soc. \textbf{316} (1989), no.~1, 115--140.
  \MR{982239 (90c:03042)}

\bibitem{MR2421832}
Tadeusz Dobrowolski and Witold Marciszewski, \emph{Covering a {P}olish group by
  translates of a nowhere dense set}, Topology Appl. \textbf{155} (2008),
  no.~11, 1221--1226. \MR{2421832 (2009d:03112)}

\bibitem{fremlin5}
D.~H. Fremlin, \emph{Measure theory. {V}ol. 5, set-theoretic measure theory},
  Torres Fremlin, Colchester, 2015.

\bibitem{GMS}
Fred Galvin, Jan Mycielski, and Robert~M. Solovay, \emph{Strong measure zero
  sets}, Notices of the American Mathematical Society \textbf{26} (1979),
  no.~3, A--280.

\bibitem{MR3696064}
\bysame, \emph{Strong measure zero and infinite games}, Arch. Math. Logic
  \textbf{56} (2017), no.~7-8, 725--732. \MR{3696064}

\bibitem{MR1253925}
Martin Goldstern, Haim Judah, and Saharon Shelah, \emph{Strong measure zero
  sets without {C}ohen reals}, J. Symbolic Logic \textbf{58} (1993), no.~4,
  1323--1341. \MR{1253925 (95c:03126)}

\bibitem{MR1503103}
Alfred Haar, \emph{Der {M}assbegriff in der {T}heorie der kontinuierlichen
  {G}ruppen}, Ann. of Math. (2) \textbf{34} (1933), no.~1, 147--169.
  \MR{1503103}

\bibitem{MR1362951}
J.~D. Howroyd, \emph{On {H}ausdorff and packing dimension of product spaces},
  Math. Proc. Cambridge Philos. Soc. \textbf{119} (1996), no.~4, 715--727.
  \MR{1362951 (96j:28006)}

\bibitem{MR2648159}
Michael Hru{\v{s}}{\'a}k, David Meza-Alc{\'a}ntara, and Hiroaki Minami,
  \emph{Pair-splitting, pair-reaping and cardinal invariants of
  {$F_\sigma$}-ideals}, J. Symbolic Logic \textbf{75} (2010), no.~2, 661--677.
  \MR{2648159 (2012e:03099)}

\bibitem{MR3453581}
Michael Hru\v{s}\'ak, Wolfgang Wohofsky, and Ond\v{r}ej Zindulka, \emph{Strong
  measure zero in separable metric spaces and {P}olish groups}, Arch. Math.
  Logic \textbf{55} (2016), no.~1-2, 105--131. \MR{3453581}

\bibitem{MR3707641}
Michael Hru\v{s}\'ak and Jind\v{r}ich Zapletal, \emph{Strong measure zero sets
  in {P}olish groups}, Illinois J. Math. \textbf{60} (2016), no.~3-4, 751--760.
  \MR{3707641}

\bibitem{MR1940513}
Thomas Jech, \emph{Set theory}, Springer Monographs in Mathematics,
  Springer-Verlag, Berlin, 2003, The third millennium edition, revised and
  expanded. \MR{1940513 (2004g:03071)}

\bibitem{MR1321597}
Alexander~S. Kechris, \emph{Classical descriptive set theory}, Graduate Texts
  in Mathematics, vol. 156, Springer-Verlag, New York, 1995. \MR{1321597
  (96e:03057)}

\bibitem{MR0318427}
J.~D. Kelly, \emph{A method for constructing measures appropriate for the study
  of {C}artesian products}, Proc. London Math. Soc. (3) \textbf{26} (1973),
  521--546. \MR{0318427}

\bibitem{MR2029229}
Ljubi\v{s}a D.~R. Ko\v{c}inac and Marion Scheepers, \emph{Combinatorics of open
  covers. {VII}. {G}roupability}, Fund. Math. \textbf{179} (2003), no.~2,
  131--155. \MR{2029229}

\bibitem{MR756630}
Kenneth Kunen, \emph{Set theory}, Studies in Logic and the Foundations of
  Mathematics, vol. 102, North-Holland Publishing Co., Amsterdam, 1983, An
  introduction to independence proofs, Reprint of the 1980 original. \MR{756630
  (85e:03003)}

\bibitem{kysiak}
Marcin Kysiak, \emph{On {E}rd{\H{o}}s-{S}ierpi{\'{n}}ski duality between
  {L}ebesgue measure and {B}aire category (in {P}olish)}, Master's thesis
  (2000).

\bibitem{MR2069032}
Paul~B. Larson, \emph{The stationary tower}, University Lecture Series,
  vol.~32, American Mathematical Society, Providence, RI, 2004, Notes on a
  course by W. Hugh Woodin. \MR{2069032}

\bibitem{MR0422027}
Richard Laver, \emph{On the consistency of {B}orel's conjecture}, Acta Math.
  \textbf{137} (1976), no.~3-4, 151--169. \MR{0422027 (54 \#10019)}

\bibitem{MR613787}
Arnold~W. Miller, \emph{Some properties of measure and category}, Trans. Amer.
  Math. Soc. \textbf{266} (1981), no.~1, 93--114. \MR{613787 (84e:03058a)}

\bibitem{MR954892}
Arnold~W. Miller and David~H. Fremlin, \emph{On some properties of {H}urewicz,
  {M}enger, and {R}othberger}, Fund. Math. \textbf{129} (1988), no.~1, 17--33.
  \MR{954892 (89g:54061)}

\bibitem{MR2224048}
Arnold~W. Miller and Juris Stepr{\=a}ns, \emph{The number of translates of a
  closed nowhere dense set required to cover a {P}olish group}, Ann. Pure Appl.
  Logic \textbf{140} (2006), no.~1-3, 52--59. \MR{2224048 (2007a:03059)}

\bibitem{MR0053186}
M.~E. Munroe, \emph{Introduction to measure and integration}, Addison-Wesley
  Publishing Company, Inc., Cambridge, Mass., 1953. \MR{0053186 (14,734a)}

\bibitem{MR1610427}
Andrej Nowik, Marion Scheepers, and Tomasz Weiss, \emph{The algebraic sum of
  sets of real numbers with strong measure zero sets}, J. Symbolic Logic
  \textbf{63} (1998), no.~1, 301--324. \MR{1610427 (99c:54049)}

\bibitem{MR1905154}
Andrzej Nowik and Tomasz Weiss, \emph{On the {R}amseyan properties of some
  special subsets of {$2\sp \omega$} and their algebraic sums}, J. Symbolic
  Logic \textbf{67} (2002), no.~2, 547--556. \MR{1905154 (2003c:03087)}

\bibitem{MR0651013}
Kalyanapuram~Rangachari Parthasarathy, \emph{Introduction to probability and
  measure}, Springer-Verlag New York Inc., New York, 1978. \MR{0651013}

\bibitem{MR776210}
Janusz Pawlikowski, \emph{Powers of transitive bases of measure and category},
  Proc. Amer. Math. Soc. \textbf{93} (1985), no.~4, 719--729. \MR{776210
  (86k:03041)}

\bibitem{MR1056381}
\bysame, \emph{Finite support iteration and strong measure zero sets}, J.
  Symbolic Logic \textbf{55} (1990), no.~2, 674--677. \MR{1056381}

\bibitem{prikry}
Karel Prikry, unpublished result.

\bibitem{MR0281862}
C.~A. Rogers, \emph{Hausdorff measures}, Cambridge University Press, London,
  1970. \MR{0281862 (43 \#7576)}

\bibitem{MR0004281}
Fritz Rothberger, \emph{Sur les familles ind\'enombrables de suites de nombres
  naturels et les probl\`emes concernant la propri\'et\'e {$C$}}, Proc.
  Cambridge Philos. Soc. \textbf{37} (1941), 109--126. \MR{0004281 (2,352a)}

\bibitem{MR1779763}
Marion Scheepers, \emph{Finite powers of strong measure zero sets}, J. Symbolic
  Logic \textbf{64} (1999), no.~3, 1295--1306. \MR{1779763 (2001m:03096)}

\bibitem{MR1324470}
Saharon Shelah, \emph{Every null-additive set is meager-additive}, Israel J.
  Math. \textbf{89} (1995), no.~1-3, 357--376. \MR{1324470 (96j:04002)}

\bibitem{SieSzpi}
W.~Sierpi\'{n}ski and E.~Szpilrajn, \emph{Remarque sur le probl\`{e}me de la
  mesure}, Fund. MAth. \textbf{26} (1936), 256--261.

\bibitem{sierpinski}
Wac{\l}aw Sierpi\'nski, \emph{Sur un ensemble non denombrable, dont toute image
  continue est de mesure nulle}, Fundamenta Mathematicae \textbf{11} (1928),
  302--304.

\bibitem{MR1503125}
Edward Szpilrajn, \emph{Remarques sur les fonctions sousharmoniques}, Ann. of
  Math. (2) \textbf{34} (1933), no.~3, 588--594. \MR{1503125}

\bibitem{Szpilrajn34}
\bysame, \emph{Remarques sur les fonctions compl\`etement additives d'ensemble
  et sur les ensembles jouissant de la propri\'et\'e de {B}aire}, Fund. Math.
  \textbf{22} (1934), no.~1, 303--311.

\bibitem{MR2545840}
Tomasz Weiss, \emph{On meager additive and null additive sets in the {C}antor
  space {$2^\omega$} and in {$\Bbb R$}}, Bull. Pol. Acad. Sci. Math.
  \textbf{57} (2009), no.~2, 91--99. \MR{2545840}

\bibitem{MR3241126}
\bysame, \emph{Addendum to ``{O}n meager additive and null additive sets in the
  {C}antor space {$2^\omega$} and in {$\Bbb R$}'' ({B}ull. {P}olish {A}cad.
  {S}ci. {M}ath. 57 (2009), 91--99) [mr2545840]}, Bull. Pol. Acad. Sci. Math.
  \textbf{62} (2014), no.~1, 1--9. \MR{3241126}

\bibitem{MR3731016}
\bysame, \emph{Properties of the intersection ideal {$\mathcal M\cap \mathcal
  N$} revisited}, Bull. Pol. Acad. Sci. Math. \textbf{65} (2017), no.~2,
  107--111. \MR{3731016}

\bibitem{wohofsky}
Wolfgang Wohofsky, \emph{Special sets of real numbers and variants of the
  {B}orel {C}onjecture}, Ph.D. thesis, Technische {U}niversit\"{a}t {W}ien,
  2013.

\bibitem{MR1955243}
Teruyuki Yorioka, \emph{The cofinality of the strong measure zero ideal}, J.
  Symbolic Logic \textbf{67} (2002), no.~4, 1373--1384. \MR{1955243
  (2004b:03071)}

\bibitem{Zin_M-additive}
Ond\v{r}ej Zindulka, \emph{Meager-additive sets in topological groups},
  Submitted.

\bibitem{MR2957686}
\bysame, \emph{Universal measure zero, large {H}ausdorff dimension, and nearly
  {L}ipschitz maps}, Fund. Math. \textbf{218} (2012), no.~2, 95--119.
  \MR{2957686}

\bibitem{Zin_M-add}
\bysame, \emph{Strong measure zero and meager-additive sets through the prism
  of fractal measures}, Comment. Math. Univ. Carol. \textbf{60} (2019), no.~1,
  131--155.

\end{thebibliography}

\end{document}